\newcommand{\define}{\textbf}
\newcommand{\isom}{\cong}
\renewcommand{\setminus}{\smallsetminus}
\renewcommand{\phi}{\varphi}
\newcommand{\exterior}{\textstyle\bigwedge}
\renewcommand{\tilde}{\widetilde}
\renewcommand{\hat}{\widehat}
\renewcommand{\bar}{\overline}
\renewcommand{\AA}{\mathbb{A}}
\newcommand{\CC}{\mathbb{C}}
\newcommand{\GG}{\mathbb{G}}
\newcommand{\PP}{\mathbb{P}}
\newcommand{\QQ}{\mathbb{Q}}
\newcommand{\RR}{\mathbb{R}}
\newcommand{\ZZ}{\mathbb{Z}}
\newcommand{\shfE}{\mathscr{E}}
\newcommand{\shfF}{\mathscr{F}}
\newcommand{\shfV}{\mathscr{V}}
\newcommand{\catA}{\mathcal{A}}
\newcommand{\catB}{\mathcal{B}}
\newcommand{\catC}{\mathcal{C}}
\newcommand{\catD}{\mathcal{D}}
\newcommand{\Sch}{\mathbf{Sch}}
\newcommand{\dSch}{\mathbf{dSch}}
\newcommand{\Ran}{\mathrm{Ran}}
\newcommand{\fX}{\mathfrak{X}}
\newcommand{\fY}{\mathfrak{Y}}
\newcommand{\fZ}{\mathfrak{Z}}
\newcommand{\ee}{\mathrm{e}}  
\newcommand{\eem}{\varepsilon} 
\newcommand{\OO}{\mathcal{O}} 
\newcommand{\id}{\mathrm{id}}
\newcommand{\pt}{\mathrm{pt}}
\newcommand{\op}{\mathrm{op}}
\newcommand{\opk}{{\op}K}
\newcommand{\kK}{\hat{K}}    
\newcommand{\aA}{\hat{A}}    
\newcommand{\opkK}{{\op}\kK} 
\newcommand{\loc}{\mathrm{loc}} 
\newcommand{\perf}{\mathrm{perf}}
\newcommand{\ch}{\mathrm{ch}} 
\newcommand{\td}{\mathrm{td}} 
\newcommand{\tor}{T\hspace{-.6ex}or}
\DeclareMathOperator{\Hom}{Hom}
\DeclareMathOperator{\Spec}{Spec}
\DeclareMathOperator{\Sym}{Sym}
\DeclareMathOperator{\coh}{coh}
\DeclareMathOperator{\qcoh}{qcoh}
\newtheorem{theorem}{Theorem}[section]
\newtheorem{lemma}[theorem]{Lemma}
\newtheorem{proposition}[theorem]{Proposition}
\newtheorem{corollary}[theorem]{Corollary}
\newtheorem{definition}[theorem]{Definition}
\newtheorem*{cor*}{Corollary}
\theoremstyle{definition}
\newtheorem{remark}[theorem]{Remark}
\newtheorem{example}[theorem]{Example}
\newtheorem{question}[theorem]{Question}
\newtheorem{notation}[theorem]{Notation}
\begin{document}

\title[Equivariant Grothendieck-Riemann-Roch and localization]{Equivariant Grothendieck-Riemann-Roch and localization in operational $K$-theory}

\author{Dave Anderson}
\address{Department of Mathematics, The Ohio State University, Columbus, OH 43210}
\email{anderson.2804@math.osu.edu}

\author{Richard Gonzales}
\address{Department of Sciences, Pontificia Universidad Cat\'olica del Per\'u,
San Miguel, Lima 32, Peru}
\email{rgonzalesv@pucp.edu.pe}

\author[Sam Payne]{Sam Payne \\ With an appendix by Gabriele Vezzosi}
\address{Department of Mathematics, University of Texas, Austin, TX 78712, USA}
\email{sampayne@utexas.edu}

\address{Dipartimento di Matematica ed Informatica, Universit\`{a} di Firenze, Florence, Italy}
\email{gabriele.vezzosi@unifi.it}

\keywords{}
\date{May 4, 2020}
\thanks{SP was partially supported by NSF DMS-1702428 and a Simons Fellowship, and completed portions of this work at MSRI. DA was partially supported by NSF DMS-1502201. RG was partially supported by PUCP DGI-2017-1-0130 and PUCP DGI-2018-1-0116}

\begin{abstract}
We produce a Grothendieck transformation from bivariant operational $K$-theory to Chow, with a Riemann-Roch formula that generalizes classical Grothendieck-Verdier-Riemann-Roch.  We also produce Grothen\-dieck transformations and Riemann-Roch formulas that generalize the classical Adams-Riemann-Roch and equivariant localization theorems.  As applications, we exhibit a projective toric variety $X$ whose equivariant $K$-theory of vector bundles does not surject onto its ordinary $K$-theory, and describe the operational $K$-theory of spherical varieties in terms of fixed-point data.

In an appendix, Vezzosi studies operational $K$-theory of derived schemes and constructs a Grothendieck transformation from bivariant algebraic $K$-theory of relatively perfect complexes to bivariant operational $K$-theory.
\end{abstract}

\maketitle

\section{Introduction}

Riemann-Roch theorems lie at the heart of modern intersection theory, and much of modern algebraic geometry.  Grothendieck recast the classical formula for smooth varieties as a functorial property of the Chern character, viewed as a natural transformation of contravariant ring-valued functors, from $K$-theory of vector bundles to Chow theory of cycles modulo rational equivalence, with rational coefficients.  The Chern character does not commute with Gysin pushforward for proper maps, but a precise correction is given in terms of Todd classes, as expressed in the {\it Grothendieck-Riemann-Roch formula}
\[
  f_*( \ch(\xi) \cdot \td(T_X) ) = \ch(f_*\xi)\cdot \td(T_Y),
\]
which holds for any proper morphism $f\colon X \to Y$ of smooth varieties and any class $\xi$ in the Grothendieck group of algebraic vector bundles $K^\circ X$.

For singular varieties, Grothendieck groups of vector bundles do not admit Gysin pushforward for proper maps, and Chow groups of cycles modulo rational equivalence do not have a ring structure.  On the other hand, Baum, Fulton, and MacPherson constructed a transformation $\tau\colon K_\circ X \to A_*(X)_\QQ$, from the Grothendieck group of coherent sheaves to the Chow group of cycles modulo rational equivalence, which satisfies a Verdier-Riemann-Roch formula analogous to the Grothendieck-Riemann-Roch formula, for local complete intersection (l.c.i.) morphisms \cite{bfm, sga6}.  Moreover, Fulton and MacPherson introduced {\em bivariant theories} as a categorical framework for unifying such analogous pairs of formulas.  The prototypical example is a single \emph{Grothendieck transformation} from the bivariant $K$-theory of $f$-perfect complexes to the bivariant operational Chow theory, which simultaneously unifies and generalizes the above Grothendieck-Riemann-Roch and Verdier-Riemann-Roch formulas.

We give a detailed review of bivariant theories in \S\ref{ss.bt}.  For now, recall that a bivariant theory assigns a group $U(f\colon X \to Y)$ to each morphism in a category, and comes equipped with operations of pushforward, along a class of \emph{confined}  morphisms, as well as pullback and product.  It includes a homology theory $U_*$, which is covariant for confined morphisms, and a cohomology theory $U^*$, which is contravariant for all morphisms.   An element $\theta\in U(f\colon X \to Y)$ determines Gysin homomorphisms $\theta^*\colon U_*(Y) \to U_*(X)$ and, when $f$ is confined, $\theta_*\colon U^*(X) \to U^*(Y)$.  An assignment of elements $[f] \in U(f\colon X \to Y)$, for some class of morphisms $f$, is called a {\it canonical orientation} if it respects the bivariant operations.  The Gysin homomorphisms associated to a canonical orientation $[f]$ are often denoted $f^*$ and $f_*$.

If $U$ and $\bar{U}$ are two bivariant theories defined on the same category, a Grothen\-dieck transformation from $U$ to $\bar{U}$ is a collection of homomorphisms $t\colon U(X \to Y) \to \bar{U}(X \to Y)$, one for each morphism, which respects the bivariant operations.  A {\it Riemann-Roch formula}, in the sense of \cite{fm}, is an equality
\[
   t([f]_U) = u_f \cdot [f]_{\bar{U}},
\]
where $u_f\in \bar{U}^*(X)$ plays the role of a generalized Todd class.

In previous work \cite{ap, g.loc}, we introduced a bivariant operational $K$-theory, closely analogous to the bivariant operational Chow theory of Fulton and Mac\-Pherson, which agrees with the $K$-theory of vector bundles for smooth varieties, and developed its basic properties.  Here, we deepen that study by constructing Grothendieck transformations and proving Riemann-Roch formulas that generalize the classical Grothendieck-Verdier-Riemann-Roch, Adams-Riemann-Roch, and Lefschetz-Riemann-Roch, or equivariant localization, theorems.  Throughout, we work equivariantly with respect to a split torus $T$.

\subsection*{Grothendieck-Verdier-Riemann-Roch}
By the equivariant Riemann-Roch theorem of Edidin and Graham, there are natural homomorphisms
\[
  K^T_\circ(X) \to \hat{K}^T_\circ(X)_\QQ \xrightarrow{\tau} \aA^T_*(X)_\QQ,
\]
the second of which is an isomorphism, where the subscript $\QQ$ indicates tensoring with the rational numbers, and $\hat{K}$ and $\aA$ are completions with respect to the augmentation ideal and the filtration by (decreasing) degrees, respectively.  Our first theorem is a bivariant extension of the Edidin-Graham equivariant Riemann-Roch theorem, which provides formulas generalizing the classical Grothendieck-Riemann-Roch and Verdier-Riemann-Roch formulas in the case where $T$ is trivial.

\begin{theorem}\label{t.GRR}
There are Grothendieck transformations
\[
  \opk_T^\circ(X\to Y) \to \opkK_T^\circ(X\to Y)_\QQ \xrightarrow{\ch} \aA_T^*(X\to Y)_\QQ,
\]
the second of which induces isomorphisms of groups, and both are compatible with the natural restriction maps to $T'$-equivariant groups, for $T' \subset T$.

Furthermore, equivariant lci morphisms have canonical orientations, and if $f$ is such a morphism, then
\[
  \ch( [f]_K ) = \td(T_f)\cdot [f]_A,
\]
where $\td(T_f)$ is the Todd class of the virtual tangent bundle.
\end{theorem}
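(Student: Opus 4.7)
The plan is to build both Grothendieck transformations from the Edidin-Graham equivariant Riemann-Roch isomorphism $\tau\colon\kK^T_\circ(X)_\QQ\xrightarrow{\sim}\aA^T_*(X)_\QQ$, exploiting that a bivariant operational class is a compatible family of operations on the underlying homology theory, indexed by the slice category over $Y$.

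For the completion map $\opk_T^\circ(X\to Y)\to\opkK_T^\circ(X\to Y)_\QQ$, an operational class $c=\{c_g\}$ on equivariant $K$-homology is $R(T)$-linear in each $c_g$, and hence extends uniquely to operations on $\kK^T_\circ(-)_\QQ$ by tensoring with $\hat{R}(T)_\QQ$. Compatibility with the operational axioms (proper pushforward, flat pullback, and the bivariant product) is preserved term by term, placing the extension in $\opkK_T^\circ(X\to Y)_\QQ$.

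For the Chern character, define $\ch(c)_g:=\tau_{X\times_Y Y'}\circ c_g\circ\tau_{Y'}^{-1}$. Covariance of $\tau$ under equivariant proper pushforward (the Baum-Fulton-MacPherson theorem, equivariantly extended by Edidin-Graham) immediately yields compatibility with proper pushforward in the base. For an lci morphism $h$ in the base, equivariant Verdier-Riemann-Roch provides $\tau\circ h^*_K=\td(T_h)\cdot h^*_A\circ\tau$, so the $K$-operational commutation $c\circ h^*_K=h^*_K\circ c$ conjugates under $\tau$ to the Chow operational relation $\ch(c)\circ h^*_A=h^*_A\circ\ch(c)$, with the two Todd factors cancelling symmetrically on source and target. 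Hence $\ch(c)$ is an operational Chow class. Because $\tau$ is a schemewise isomorphism, the same conjugation formula produces a two-sided inverse, showing $\ch$ is an isomorphism of operational groups. Compatibility with restriction to $T'\subset T$ is automatic from the naturality of $\tau$ and of both completions.

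For the lci orientation on $f\colon X\to Y$, I take $[f]_A$ to be the Fulton-MacPherson canonical orientation, and $[f]_K$ the analogous class given by $\OO_X$ viewed as an $f$-perfect complex (a Koszul resolution in the case of a regular embedding; a dualizing-sheaf computation in the case of a smooth morphism). The identity $\ch([f]_K)=\td(T_f)\cdot[f]_A$ reduces via the standard factorization of an lci map as a regular embedding followed by a smooth morphism to two classical cases: smooth $f$ is equivariant Verdier-Riemann-Roch applied directly, and regular embeddings are handled by the Koszul-resolution computation of BFM, extended equivariantly by Edidin-Graham. The main technical obstacle is the cancellation check in the third paragraph, namely verifying carefully that conjugation by $\tau$ preserves every operational axiom, especially the refined lci Gysin condition, in the simultaneously equivariant, bivariant, completed, and rational setting.
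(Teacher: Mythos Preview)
Your approach is essentially the paper's: conjugate by the Edidin--Graham isomorphism $\tau$, then verify that the Todd classes cancel symmetrically when checking commutation with refined Gysin maps. The paper packages this via a general framework (Proposition~\ref{p.transf}), but the content is the same conjugation formula and the same cancellation computation you describe.

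One point deserves care. The operational axioms require commutation with refined pullback along \emph{flat} morphisms and regular embeddings, but Verdier--Riemann--Roch only supplies the Todd-class identity $\tau\circ h^!_K=\td(T_h)\cdot h^!_A\circ\tau$ for \emph{lci} morphisms. A general flat morphism need not be lci and has no virtual tangent bundle, so your cancellation argument does not directly apply. The paper closes this gap by observing (Remark~\ref{r.weaker}) that replacing ``flat'' by ``smooth'' in the operational axioms yields the same bivariant theory, via the Kan extension characterization; once that is granted, smooth morphisms and regular embeddings are both lci and your argument goes through. You should either invoke this reduction or supply an independent argument for flat pullback.
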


\noindent When $T$ is trivial, and $X$ and $Y$ are quasi-projective, the classical Chern character from algebraic $K$-theory of $f$-perfect complexes to $A^*(X \to Y)$ factors through $\ch$, via the Grothendieck transformation constructed by Vezzosi in Appendix~\ref{app:vezzosi}.  Hence, Theorem~\ref{t.GRR} may be seen as a natural extension of Grothendieck-Verdier-Riemann-Roch.  See also Remark~\ref{r.history}, below.

Specializing the Riemann-Roch formula to statements for homology and cohomology, we obtain the following.
\begin{cor*}
If $f\colon X \to Y$ is an equivariant lci morphism, then the diagrams
\begin{diagram}
 \opk_T^\circ(X)   &\rTo^{\ch} & \aA_T^*(X)_\QQ &  & {K}^T_\circ(X)   &\rTo^{\tau} & \aA^T_*(X)_\QQ \\
 \dTo^{f_*} &   &  \dTo_{f_*(\;\; \cdot \td(T_f))} & \quad  \qquad \text{and} \quad  &  \uTo^{f^*} &   &  \uTo_{\td(T_f)\cdot f^*} \\
 \opk_T^\circ(Y)   & \rTo^{\ch} & \aA_T^*(Y)_\QQ  &  &  {K}^T_\circ(Y)   & \rTo^{\tau} & \aA^T_*(Y)_\QQ
\end{diagram}
commute.  For the first diagram, $f$ is assumed proper.
\end{cor*}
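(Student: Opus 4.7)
The plan is to deduce both squares directly from the Riemann-Roch identity $\ch([f]_K) = \td(T_f)\cdot[f]_A$ of Theorem~\ref{t.GRR}, using that a Grothendieck transformation commutes with the bivariant operations of product, pushforward, and pullback. The key point I would invoke is the bivariant presentation of the Gysin maps: when $f$ carries a canonical orientation $[f]_U$, the Gysin pushforward on cohomology (for $f$ confined) is $\alpha \mapsto f_*(\alpha\cdot [f]_U)$, and the Gysin pullback on homology is $\beta \mapsto [f]_U\cdot\beta$. Granted this, both squares will reduce to a single line of bivariant manipulation.

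For the first square, with $f$ proper, I would take $\alpha \in \opk_T^\circ(X)$ and apply the compatibility of $\ch$ with bivariant pushforward and product, followed by Theorem~\ref{t.GRR}, to obtain
\begin{align*}
\ch(f_*\alpha) = \ch\bigl(f_*(\alpha\cdot [f]_K)\bigr) &= f_*\bigl(\ch(\alpha)\cdot \ch([f]_K)\bigr) \\
                                                       &= f_*\bigl(\ch(\alpha)\cdot \td(T_f)\cdot [f]_A\bigr),
\end{align*}
which is precisely the image of $\ch(\alpha)$ under $\beta \mapsto f_*(\beta\cdot\td(T_f))$. For the second square, I would let $\beta \in K^T_\circ(Y)$ and write $\tau$ for the transformation on homology induced by the composite of the two Grothendieck transformations in Theorem~\ref{t.GRR} applied to the structure morphisms to $\pt$. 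The same mechanism then yields
\[
\tau(f^*\beta) = \tau\bigl([f]_K\cdot\beta\bigr) = \ch([f]_K)\cdot\tau(\beta) = \td(T_f)\cdot[f]_A\cdot\tau(\beta) = \td(T_f)\cdot f^*\tau(\beta),
\]
using compatibility with bivariant product and the Riemann-Roch formula.

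No serious obstacle is anticipated: the corollary is essentially a formal consequence of the bivariant statement. The main care required will be to verify that the bivariant pushforward and product specialize, for the operational theories at hand, to the cohomological Gysin pushforward in $\opk_T^\circ$ and $\aA_T^*$ and to the homological Gysin pullback in $K^T_\circ$ and $\aA^T_*$ as displayed in the diagrams, and that equivariant lci morphisms do carry the canonical orientations being invoked --- both points having already been established in Theorem~\ref{t.GRR}.
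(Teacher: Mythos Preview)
Your proposal is correct and follows essentially the same approach as the paper. The paper does not give a separate proof of this corollary but simply states that it is obtained by ``specializing the Riemann-Roch formula to statements for homology and cohomology,'' invoking the general framework of \S\ref{ss.gt}, where exactly the argument you wrote out --- that a Riemann-Roch formula $t([f]_U)=u_f\cdot[\bar f]_{\bar U}$ formally yields the two commutative squares via the bivariant description of Gysin maps --- is explained in full.
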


\begin{remark}\label{r.history}
As explained in \cite{fm}, formulas of this type for singular varieties first appeared in \cite{sga6} and \cite{verdier}, respectively; a homomorphism like $\tau$, taking values in (non-equivariant) singular homology groups, was originally constructed in \cite{bfm}.  The homomorphism $\tau$ was first constructed for equivariant theories by Edidin and Graham \cite{eg-rr}, with the additional hypothesis that $X$ and $Y$ be equivariantly embeddable in smooth schemes.  A more detailed account of the history of Riemann-Roch formulas can be found in \cite[\S18]{fulton-it}.

These earlier Grothendieck transformations and Riemann-Roch formulas all take some version of algebraic or topological $K$-theory as the source, and typically carry additional hypotheses, such as quasi-projectivity or embeddability in smooth schemes.  For instance, for quasi-projective schemes, Fulton gives a Grothendieck transformation $K^\circ_\perf(X\to Y) \to A^*(X \to Y)_\QQ$ which, by construction, factors through $\opk^\circ(X\to Y)$ \cite[Ex.~18.3.16]{fulton-it}.  Combining Theorem~\ref{t.GRR} with Vezzosi's Theorem~\ref{thm:Groth-transform}, which gives a Grothendieck transformation $K^\circ_\perf(X \to Y) \to \opk^\circ(X \to Y)$, we see that Fulton's Grothendieck transformation extends to arbitrary schemes.

Other variations of bivariant Riemann-Roch theorems have been studied for topological and higher algebraic $K$-theory; see, e.g., \cite{williams,levy}.
\end{remark}

\begin{remark}
Vezzosi's proof of Theorem~\ref{thm:Groth-transform} uses derived algebraic geometry in an essential way.  It seems difficult to prove the existence of such a Grothendieck transformation directly, in the category of ordinary (underived) schemes.
\end{remark}

\subsection*{Adams-Riemann-Roch}

Our second theorem is an extension of the classical Adams-Riemann-Roch theorem.  Here, the role of the Todd class is played by the equivariant Bott elements $\theta^j$, which are invertible in $\opkK_T^\circ(X)[{j}^{-1}]$.

\begin{theorem}\label{t.ARR}
There are Grothendieck transformations
\[
  \opk_T^\circ(X\to Y) \xrightarrow{\psi^j} \opkK_T^\circ(X\to Y)[{j}^{-1}],
\]
for each nonnegative integer $j$, that specialize to the usual Adams operations $\psi^j\colon K_T^\circ X \to K_T^\circ X$ when $X$ is smooth.

There is a Riemann-Roch formula
\[
  \psi^j([f]) = \theta^j(T_f^\vee)^{-1}\cdot [f],
\]
for an equivariant lci morphism $f$.
\end{theorem}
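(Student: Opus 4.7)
The plan is to construct the Grothendieck transformation $\psi^j$ in close analogy with the Chern character of Theorem~\ref{t.GRR}, and then to verify the Riemann-Roch formula by reducing to the smooth case.

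First I would construct $\psi^j$ by specifying its action on bivariant classes via the universal property of operational $K$-theory. For a smooth equivariant scheme $Z$, the classical Adams operation $\psi^j\colon K_T^\circ(Z) \to K_T^\circ(Z)[j^{-1}]$ is defined by the $\lambda$-ring structure on vector bundles, and since operational $K$-theory of a smooth scheme coincides with vector-bundle $K$-theory, this already supplies $\psi^j$ on $\opk_T^\circ(Z)$ in that case. To extend to the bivariant setting over arbitrary schemes, I would use that operational $K$-theory inherits a natural $\lambda$-ring structure (as established in \cite{ap}) and define $\psi^j$ via Newton's identities from the operational exterior power operations. The Grothendieck-transformation property, namely compatibility with bivariant pushforward, pullback, and exterior product, is then checked on generators by reducing to the smooth case via equivariant envelopes and the descent techniques of \cite{ap, g.loc}.

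Next, to establish the Riemann-Roch formula, I would factor an equivariant lci morphism $f\colon X \to Y$ locally as a regular embedding $i\colon X \hookrightarrow P$ followed by a smooth morphism $p\colon P \to Y$, so that $T_f = [i^*T_p] - [N_i]$ virtually. For a smooth morphism $p$, the identity $\psi^j([p]) = \theta^j(T_p^\vee)^{-1}\cdot [p]$ is the classical Adams-Riemann-Roch formula in operational form. For a regular embedding $i$ with normal bundle $N_i$, the desired identity reduces to Bott's computation on the Koszul resolution of $\OO_X$ over $\OO_P$, once $[i]$ is identified with $\sum_k(-1)^k\lambda^k(N_i^\vee)$ via the excess-intersection formula valid in operational $K$-theory. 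The multiplicativity of $\theta^j$ under short exact sequences then assembles the two partial formulas into the statement for $f = p \circ i$, and independence of the factorization follows from the bivariant identities of Grothendieck transformations.

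The main obstacle is the construction itself. Unlike the Chern character, which factors through the Chow theory and so benefits directly from the Edidin-Graham isomorphism of Theorem~\ref{t.GRR}, the Adams operations must land in the completed operational $K$-theory with only $j^{-1}$ inverted rather than tensored with $\QQ$; one therefore cannot simply transfer from Chow. Instead, I would have to work directly with operators on completed $K$-theory of coherent sheaves, and verify well-definedness via equivariant envelopes. The delicate point is that the envelope relations must be respected by $\psi^j$, which in turn depends on the Adams-Riemann-Roch formula for the envelope map itself serving as an internal consistency condition—so the construction and the formula are intertwined and must be proved in tandem.
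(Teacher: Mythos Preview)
Your proposal has a genuine gap in the construction step. Defining $\psi^j$ via a $\lambda$-ring structure and Newton's identities produces ring endomorphisms of the \emph{contravariant} groups $\opk_T^\circ(X)$, but a Grothendieck transformation must be defined on all bivariant groups $\opk_T^\circ(X\to Y)$, and these are not rings for general $Y$. You have not explained how a $\lambda$-structure would act there. (Incidentally, \cite{ap} does not establish a $\lambda$-ring structure on operational $K$-theory; that citation does not support the claim.) Your final paragraph gestures toward ``operators on completed $K$-theory of coherent sheaves,'' which is closer to the right idea, but the assertion that the construction and the Riemann-Roch formula ``must be proved in tandem'' misdiagnoses the logical structure.

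The paper proceeds quite differently and avoids this circularity. It first constructs a \emph{covariant} Adams operation $\psi_j\colon K^T_\circ(X)\to \hat{K}^T_\circ(X)[j^{-1}]$ on the homology theory: for quasi-projective $X$ one embeds $\iota\colon X\hookrightarrow M$ with $M$ smooth and sets $\psi_j(\alpha)=\theta^j(T_M^\vee)^{-1}\cdot\psi^j(\iota_*\alpha)$, checking independence of $M$ and covariance for proper pushforward using the classical equivariant Adams-Riemann-Roch of \cite{Kock98}; one then extends to arbitrary $X$ by Chow envelopes and the descent sequence~\eqref{e.k-descent}. The key lemma (Lemma~\ref{l.adams-isom}) is that $\psi_j$ becomes an \emph{isomorphism} on $\hat{K}^T_\circ(X)[j^{-1}]$, proved via the $\gamma$-filtration. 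Once one has a natural isomorphism of homology theories, Proposition~\ref{p.transf} manufactures the Grothendieck transformation on the associated operational bivariant theories automatically---no separate bivariant construction is needed. The Riemann-Roch formula for lci $f$ then follows; your factorization argument for that part is fine, but it is downstream of the construction rather than intertwined with it.
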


As before, the Riemann-Roch formula has the following specializations.
\begin{cor*}
If $f\colon X\to Y$ is an equivariant lci morphism, the diagrams
\begin{diagram}
 \opk_T^\circ(X)   &\rTo^{\psi^j} & \opkK_T^\circ(X)[j^{-1}] &  & {K}^T_\circ(X)   &\rTo^{\psi_j} & \hat{K}^T_\circ(X)[j^{-1}] \\
 \dTo^{f_*} &   &  \dTo_{f_*(\;\; \cdot \theta^j(T_f^\vee)^{-1})} & \quad  \qquad \text{and} \quad  &  \uTo^{f^*} &   &  \uTo_{\theta^j(T_f^\vee)^{-1}\cdot f^*} \\
 \opk_T^\circ(Y)   & \rTo^{\psi^j} & \opkK_T^\circ(Y)[j^{-1}]  &  &  {K}^T_\circ(Y)   & \rTo^{\psi_j} & \hat{K}^T_\circ(Y)[j^{-1}]
\end{diagram}
commute, where $f$ is also assumed to be proper for the first diagram.

\end{cor*}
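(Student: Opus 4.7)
The plan is to deduce both diagrams formally from Theorem~\ref{t.ARR} together with the defining properties of a Grothendieck transformation---namely, that $\psi^j$ commutes with bivariant product, with pushforward along confined (here, proper) morphisms, and with pullback---combined with the standard recipe for building Gysin maps out of a canonical orientation $[f]$. The argument is parallel to the Corollary of Theorem~\ref{t.GRR}, with $\theta^j(T_f^\vee)^{-1}$ playing the role of $\td(T_f)$.

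For the first (cohomological) diagram, fix $\alpha \in \opk_T^\circ(X)$. By definition of the Gysin pushforward attached to the canonical orientation $[f]_K$, one has $f_*\alpha = f_*(\alpha \cdot [f]_K)$ in $\opk_T^\circ(Y)$, where the outer $f_*$ is the bivariant pushforward along the proper map $f$. Applying $\psi^j$ and using compatibility with the bivariant product and with pushforward, then substituting the Riemann-Roch formula $\psi^j([f]_K) = \theta^j(T_f^\vee)^{-1}\cdot [f]_{\hat K}$, gives
\[
  \psi^j(f_*\alpha) = f_*\bigl(\psi^j(\alpha)\cdot \psi^j([f]_K)\bigr) = f_*\bigl(\psi^j(\alpha)\cdot \theta^j(T_f^\vee)^{-1}\cdot [f]_{\hat K}\bigr).
\]
Collapsing the trailing factor $[f]_{\hat K}$ back into the Gysin pushforward for $\opkK$ yields
\[
  \psi^j(f_*\alpha) = f_*\bigl(\psi^j(\alpha)\cdot \theta^j(T_f^\vee)^{-1}\bigr),
\]
which is exactly commutativity of the first diagram.

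For the second (homological) diagram, fix $\beta \in K^T_\circ(Y)$, viewed as a bivariant class for $Y \to \pt$. The Gysin pullback is $f^*\beta = [f]_K \cdot \beta$, so using that $\psi_j$ is a Grothendieck transformation together with the Riemann-Roch formula,
\[
  \psi_j(f^*\beta) = \psi^j([f]_K)\cdot \psi_j(\beta) = \theta^j(T_f^\vee)^{-1}\cdot [f]_{\hat K}\cdot \psi_j(\beta) = \theta^j(T_f^\vee)^{-1}\cdot f^*\psi_j(\beta),
\]
where the last step recognizes $[f]_{\hat K} \cdot \psi_j(\beta)$ as the $\opkK$-Gysin pullback of $\psi_j(\beta)$.

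Since both arguments are purely formal bivariant manipulations, there is no real obstacle once Theorem~\ref{t.ARR} is in hand; the only point requiring care is the bookkeeping between the cohomological symbol $\psi^j$ and its homological specialization $\psi_j$, and the fact that the Gysin operations appearing in the corollary are the ones induced from the canonical orientations in the sense of the bivariant framework reviewed in \S\ref{ss.bt}.
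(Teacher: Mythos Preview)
Your proposal is correct and matches the paper's approach exactly: the Corollary is not given a separate proof in the paper but is simply the specialization of the bivariant Riemann-Roch formula to homology and cohomology, following the general recipe reviewed in \S\ref{ss.gt} (where it is explained that an equation $t([f]_U) = u_f\cdot [\bar f]_{\bar U}$ yields precisely these two commuting squares). Your derivation via $f_!\alpha = f_*(\alpha\cdot[f])$ and $f^!\beta = [f]\cdot\beta$ is the standard Fulton--MacPherson unpacking, and the only cosmetic point is that what you call ``$\psi_j$ is a Grothendieck transformation'' is more precisely that $\psi_j$ is the homological component of the bivariant Grothendieck transformation $\psi^j$---which you note yourself at the end.
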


\noindent
In particular, for $f$ proper lci and a class $c \in \opk^\circ_T(X)$, we have
\[
\psi^j f_* (c) = f_*( \theta^j(T_f^\vee)^{-1} \cdot \psi^j(c)),
\]
in $\opkK^\circ_T(Y)[j^{-1}]$.  This generalizes the equivariant Adams-Riemann-Roch formula for projective lci morphisms from \cite{Kock98}.

\subsection*{Lefschetz-Riemann-Roch}

Localization theorems bear a striking formal resemblance to Riemann-Roch theorems, as indicated in the Lefschetz-Riemann-Roch theorem of Baum, Fulton, and Quart \cite{bfq}.   Our third main theorem makes this explict: we construct Grothendieck transformations from operational equivariant $K$-theory (resp. Chow theory) of $T$-varieties to operational equivariant $K$-theory (resp. Chow theory) of their $T$-fixed loci.

Our Riemann-Roch formulas in this context are generalizations of classical localization statements, in which {\em equivariant multiplicities} play a role analogous to that of Todd classes in Grothendieck-Riemann-Roch.  To define these equivariant multiplicities, one must invert some elements of the base ring.

 Let $M=\Hom(T,\mathbb{G}_m)$ be the character group, so $K_T^\circ(\pt)=\ZZ[M]=R(T)$, and $A_T^*(\pt)=\Sym^*M=:\Lambda_T$.  Let $S\subseteq R(T)$ be the multiplicative set generated by $1-\ee^{-\lambda}$, and let $\bar{S}\subseteq \Sym^*M=A_T^*(\pt)$ be generated by $\lambda$, as $\lambda$ ranges over all nonzero characters in $M$.

\begin{theorem}\label{t.LRR}
There are Grothendieck transformations
\begin{align*}
 \opk_T^\circ(X\to Y) \to S^{-1}\opk_T^\circ(X\to Y) &\xrightarrow{\loc^K}  S^{-1} \opk_T^\circ(X^T \to Y^T)  
 \quad \text{and}\\
 A_T^*(X\to Y)  \to  \bar{S}^{-1} A_T^*(X \to Y) &\xrightarrow{\loc^A} \bar{S}^{-1} A_T^*(X^T \to Y^T) ,
\end{align*}
inducing isomorphisms of $S^{-1}R(T)$-modules and $\bar{S}^{-1}\Lambda_T$-modules, respectively.

Furthermore, if $f\colon X \to Y$  is a flat equivariant map whose restriction to fixed loci $f^T\colon X^T \to Y^T$ is smooth, then there are {\em equivariant multiplicities} $\eem^K(f)$ in $S^{-1}\opk^\circ_T(X^T)$ and $\eem^A(f)$ in $\bar{S}^{-1}A_T^*(X^T)$, so that
\begin{align*}
  \loc^K([f]) &= \eem^K(f)\cdot [f^T] \\ \intertext{and}
   \loc^A([f]) &= \eem^A(f)\cdot [f^T].
\end{align*}
\end{theorem}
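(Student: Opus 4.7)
The plan is to realize $\loc^K$ at the level of operators on equivariant $K$-theory of coherent sheaves, using Thomason's localization as the key input. Recall that a class $c\in\opk_T^\circ(X\to Y)$ is a system of homomorphisms $c_g\colon K^T_\circ(Y')\to K^T_\circ(X\times_Y Y')$, indexed by equivariant maps $g\colon Y'\to Y$, satisfying the bivariant axioms of compatibility with proper pushforward, flat pullback, and product. Given $h\colon Y''\to Y^T$, set $X'':=X\times_Y Y''$ via $Y''\hookrightarrow Y^T\to Y$. Since $h$ factors through $Y^T$, the induced $T$-action on $X''$ has fixed locus $(X'')^T=X^T\times_{Y^T}Y''$, and Thomason's localization theorem asserts that proper pushforward from the fixed locus induces an isomorphism
\[
  S^{-1}K^T_\circ\bigl(X^T\times_{Y^T}Y''\bigr)\xrightarrow{\;\sim\;}S^{-1}K^T_\circ(X'').
\]
Composing $c_h$ (after localizing) with the inverse of this isomorphism yields the operator $\loc^K(c)_h$. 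The Chow version proceeds identically, with the equivariant Chow localization of Edidin--Graham (after inverting $\bar S$) replacing Thomason's theorem.

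Each bivariant axiom for $\loc^K(c)$ reduces to a functoriality statement for the Thomason isomorphism: compatibility with proper pushforward is essentially the self-intersection formula for $X^T\hookrightarrow X$, and compatibility with pullback is base change. These ensure that $\loc^K$ respects products, giving the desired Grothendieck transformation. To establish the isomorphism of $S^{-1}R(T)$-modules, I would construct the inverse symmetrically: a class $d$ on $X^T\to Y^T$ produces an operator on $g\colon Y'\to Y$ by restricting to $(Y')^T\to Y^T$, applying $d$, and pushing forward from $(X\times_Y Y')^T=X^T\times_{Y^T}(Y')^T$ via Thomason; mutual inverseness follows by tracking operators through $S^{-1}K^T_\circ(Z)\cong S^{-1}K^T_\circ(Z^T)$ at each stage. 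The most delicate point---and the expected main obstacle---is verifying the bivariant axioms uniformly across arbitrary (possibly singular) parameter spaces $Y''$ and $Y'$, which requires invoking Thomason's localization in sufficient generality for singular equivariant schemes and carefully tracking the equivariant structures through all Cartesian diagrams. The corresponding care is needed for the Chow analogue, building on the work of \cite{g.loc}.

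For the Riemann--Roch formula, suppose $f\colon X\to Y$ is flat equivariant with $f^T$ smooth. The conormal sheaf $N^\vee$ to $X^T\hookrightarrow X$ is a $T$-equivariant locally free sheaf whose weights are all nonzero characters. Define
\[
  \eem^K(f):=\lambda_{-1}(N^\vee)^{-1}\in S^{-1}\opk_T^\circ(X^T),\qquad \eem^A(f):=c_{\mathrm{top}}(N)^{-1}\in\bar S^{-1}A_T^*(X^T),
\]
both of which are invertible because the leading terms are nonzero products of characters. Flatness of $f$ and smoothness of $f^T$ supply canonical orientations $[f]$ and $[f^T]$. The identity $\loc^K([f])=\eem^K(f)\cdot[f^T]$ then follows from the self-intersection formula applied to the inclusion of the fixed locus: applying the operator associated to $[f]$ to the structure sheaf and tracking it through the Thomason isomorphism produces multiplication by $\lambda_{-1}(N^\vee)$, so that the composition recovers $[f^T]$ up to the equivariant multiplicity factor. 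The Chow assertion is strictly parallel, with $c_{\mathrm{top}}(N)$ in place of $\lambda_{-1}(N^\vee)$.
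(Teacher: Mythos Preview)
Your construction of $\loc^K$ and $\loc^A$ is essentially the paper's approach unwound: the paper packages the same idea via a general machine (Proposition~\ref{p.transf}), taking the fixed-point functor $F(X)=X^T$ as a right adjoint to the inclusion of trivially-acted schemes, and using the inverse of Thomason's isomorphism as the natural transformation $\tau$. Your hands-on version is correct but would benefit from observing, as the paper does, that $S^{-1}\opk_T^\circ(X^T\to Y^T)\cong S^{-1}R(T)\otimes \opk^\circ(X^T\to Y^T)$ canonically (since the action is trivial), which cleanly identifies the target bivariant theory and streamlines the axiom checks you flag as the main obstacle.

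There is, however, a genuine gap in your treatment of the Riemann--Roch formula. You assert that the conormal sheaf $N^\vee$ of $X^T\hookrightarrow X$ is locally free, and define $\eem^K(f)=\lambda_{-1}(N^\vee)^{-1}$. But the hypotheses---$f$ flat and $f^T$ smooth---do \emph{not} imply that $X$ is smooth along $X^T$, nor that $X^T\hookrightarrow X$ is a regular embedding. For instance, take $Y=\pt$ and $X$ any singular toric variety with nondegenerate isolated fixed points: then $f$ is flat, $f^T$ is smooth (a finite set of reduced points), yet a fixed point $p\hookrightarrow X$ is a regular embedding only if $X$ is smooth at $p$. In such cases $N^\vee$ is not locally free and your formula is undefined; even when it is defined, it disagrees with the correct value, which (as the paper shows in Proposition~\ref{p.nondeg-cone}) involves the class of the tangent \emph{cone} $C_pX$, not just the tangent space.

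The paper sidesteps this entirely: since $f^T$ is smooth, product with $[f^T]$ is a Poincar\'e isomorphism $S^{-1}\opk_T^\circ(X^T)\xrightarrow{\sim}S^{-1}\opk_T^\circ(X^T\to Y^T)$, and one simply \emph{defines}
\[
  \eem^K(f):=\loc^K([f])\cdot[f^T]^{-1}.
\]
The Riemann--Roch identity is then a tautology, and the substantive content lies in computing $\eem^K(f)$ in cases of interest. Your formula $\lambda_{-1}(N^\vee)^{-1}$ is the correct answer only in the smooth case; for the general statement you must adopt the abstract definition.
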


\begin{cor*}
Let $f\colon X \to Y$ be a flat equivariant morphism whose restriction to fixed loci $f^T\colon X^T \to Y^T$ is smooth.  Then the diagrams
\begin{diagram}
 \opk_T^\circ(X)   &\rTo & S^{-1}\opk_T^\circ(X^T) &  & K^T_\circ(X)   &\rTo  & S^{-1}K^T_\circ(X^T) \\
 \dTo^{f_*} &   &  \dTo_{f^T_*(\;\; \cdot \eem^K(f))} \quad &  \qquad \text{and} \qquad &  \uTo^{f^*} &   &  \uTo_{\eem^K(f)\cdot (f^T)^*} \\
 \opk_T^\circ(Y)   & \rTo  & S^{-1}\opk_T^\circ(Y^T)  &  &  K^T_\circ(Y)   & \rTo & S^{-1}K^T_\circ(Y^T)
\end{diagram}
commute, where $f$ is assumed proper for the first diagram.  Under the same conditions, the following diagrams also commute:
\begin{diagram}
 A_T^*(X)   &\rTo & \bar{S}^{-1}A_T^*(X^T) &  & A^T_*(X)   &\rTo  & \bar{S}^{-1}A^T_*(X^T) \\
 \dTo^{f_*} &   &  \dTo_{f^T_*(\;\; \cdot \eem^A(f))} \quad & \quad \qquad \text{and} \qquad &  \uTo^{f^*} &   &  \uTo_{\eem^A(f)\cdot (f^T)^*} \\
 A_T^*(Y)   & \rTo  & \bar{S}^{-1}A_T^*(Y^T)  &  &  A^T_*(Y)   & \rTo & \bar{S}^{-1}A^T_*(Y^T).
\end{diagram}
\end{cor*}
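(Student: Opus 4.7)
The plan is to derive this corollary as a formal consequence of Theorem~\ref{t.LRR}, using the standard mechanism by which a canonical orientation in a bivariant theory induces a Gysin pushforward on cohomology and a Gysin pullback on homology. Recall that for a canonical orientation $[f]\in U(f\colon X\to Y)$, a cohomology class $c\in U^*(X)$, and a homology class $\alpha\in U_*(Y)$, one has
\[
f_*(c) = f_*(c\cdot[f])\quad (f \text{ confined}), \qquad f^*(\alpha) = [f]\cdot\alpha,
\]
where the right-hand sides use the bivariant product and the bivariant pushforward. The two columns of diagrams in the corollary thus both reduce to statements about the behaviour of $\loc^K$ (resp.\ $\loc^A$) on products involving the canonical orientation $[f]$, once we know that $f^T$ is confined (proper) whenever $f$ is, and carries its own canonical orientation. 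The former is standard: if $f$ is proper, then $f^{-1}(Y^T)\to Y^T$ is proper, and $X^T$ is closed in $f^{-1}(Y^T)$, so $f^T$ is proper. The latter holds by hypothesis since $f^T$ is smooth.

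With this setup in hand, I apply the defining properties of a Grothendieck transformation: $\loc^K$ commutes with bivariant products, with pullback, and with pushforward along confined morphisms. For the cohomology pushforward diagram for operational $K$-theory, I compute
\[
\loc^K(f_*c) = \loc^K\bigl(f_*(c\cdot[f])\bigr) = f^T_*\bigl(\loc^K(c)\cdot\loc^K([f])\bigr) = f^T_*\bigl(\loc^K(c)\cdot\eem^K(f)\cdot[f^T]\bigr),
\]
using compatibility with products and pushforward, and the Riemann-Roch formula $\loc^K([f])=\eem^K(f)\cdot[f^T]$. The last expression equals $f^T_*(\loc^K(c)\cdot\eem^K(f))$ by the definition of the cohomology pushforward along $f^T$ from its canonical orientation $[f^T]$, giving exactly the commutativity claimed.

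For the homology pullback diagram, I analogously compute
\[
\loc^K(f^*\alpha) = \loc^K([f]\cdot\alpha) = \loc^K([f])\cdot\loc^K(\alpha) = \eem^K(f)\cdot[f^T]\cdot\loc^K(\alpha) = \eem^K(f)\cdot (f^T)^*(\loc^K\alpha),
\]
which is the asserted commutativity. The two Chow diagrams are obtained by the identical argument with $\loc^K$ replaced by $\loc^A$ and $\eem^K(f)$ replaced by $\eem^A(f)$.

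Since the entirety of the substantive content—constructing the Grothendieck transformations $\loc^K,\loc^A$ and verifying the Riemann-Roch formulas for canonical orientations—is absorbed into Theorem~\ref{t.LRR}, the corollary is essentially a formal unwinding. The only potential obstacle is the bookkeeping noted above: confirming that $f^T$ inherits the properness/canonical-orientation data required to define the Gysin maps on the target side, and that the bivariant Gysin homomorphisms coincide with the maps labelled $f_*$ and $f^*$ in the stated diagrams.
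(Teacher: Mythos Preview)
Your proposal is correct and matches the paper's approach exactly: the corollary is presented as an immediate specialization of the bivariant Riemann-Roch formula in Theorem~\ref{t.LRR}, via the general mechanism (explained in \S\ref{ss.gt}) by which a Grothendieck transformation and a Riemann-Roch formula $t([f])=u_f\cdot[\bar f]$ yield commuting squares for the cohomology pushforward and homology pullback.
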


In the case where $Y=\pt$ and $X^T$ is finite\footnote{More precisely, the fixed points should be {\it nondegenerate}, a condition which guarantees the scheme-theoretic fixed locus is reduced.}, the first diagram of the Corollary provides an Atiyah-Bott type formula for the equivariant Euler characteristic (or integral, in the case of Chow).  If in addition $X$ is smooth, then this is precisely the Atiyah-Bott-Berline-Vergne formula: for $\xi\in K_T^\circ(X)$ and $\alpha\in A_T^*X$,
\begin{align*}
  \chi(\xi) &= \sum_{p\in X^T} \frac{\xi|_p}{(1-\ee^{-\lambda_1(p)})\cdots(1-\ee^{-\lambda_n(p)})}  \intertext{ and } \int_X \alpha &= \sum_{p\in X^T} \frac{\alpha|_p}{\lambda_1(p) \cdots \lambda_n(p)},
\end{align*}
where $\lambda_1(p),\ldots,\lambda_n(p)$ are the weights of $T$ acting on $T_pX$.

\medskip

These three Riemann-Roch theorems are compatible with each other, as explained in the statements of Theorems~\ref{t.rr}, \ref{t.arr}, and \ref{t.local}.  This compatibility includes localization formulas for Todd classes and Bott elements.  For instance, if $X \to \pt$ is lci and $p\in X^T$ is a nondegenerate fixed point, then
\[
  \td(X)|_p = \frac{\ch(\eem^K_p(X))}{\eem^A_p(X)} \qquad \text{ and } \qquad \theta^j(X)|_p = \frac{\eem^K_p(X)}{\psi^j(\eem^K_p(X))}.
\]
When $p\in X^T$ is nonsingular, we recover familiar expressions for these classes.  Indeed, suppose the weights for $T$ acting on $T_pX$ are $\lambda_1(p),\ldots,\lambda_n(p)$, as above.  Then the formulas for the Todd class and Bott element become
\[
  \td(X)|_p = \prod_{i=1}^n\frac{\lambda_i(p)}{1-\ee^{-\lambda_i(p)}} \qquad \text{ and } \qquad
  \theta^j(X)|_p = \prod_{i=1}^n\frac{1-\ee^{-j\cdot\lambda_i(p)}}{1-\ee^{-\lambda_i(p)}}
\]
See Remark~\ref{r.todd} for more details.

\begin{remark}
The problem of constructing Grothendieck transformations extending given tranformations of homology or cohomology functors was posed by Fulton and MacPherson.  Some general results in this direction were given by Brasselet, Sch\"urmann, and Yokura \cite{bsy}.  They do consider operational bivariant theories, but do not require operators to commute with refined Gysin maps and, consequently, do not have Poincar\'e isomorphisms for smooth schemes.
\end{remark}

\subsection*{Applications to classical $K$-theory}

Mer\-kurjev studied the restriction maps, from $G$-equivariant $K$-theory of vector bundles and coherent sheaves to ordinary, non-equivariant $K$-theory, for various groups $G$.  Notably, he showed that the restriction map for $T$-equivariant $K$-theory of coherent sheaves is always surjective, which raises the question of when this also holds for vector bundles \cite{merkurjev97,merkurjev}.  In Section~\ref{sec:toric}, as one application of our Riemann-Roch and localization theorems, we give a negative answer for toric varieties.

\begin{theorem}
There are projective toric threefolds $X$ such that the restriction map from the $K$-theory of $T$-equivariant vector bundles on $X$ to the ordinary $K$-theory of vector bundles on $X$ is not surjective.
\end{theorem}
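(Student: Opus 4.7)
The plan is to use the localization theorem (Theorem~\ref{t.LRR}) together with the combinatorial description of equivariant operational $K$-theory for toric varieties from our earlier work, and to detect the failure of surjectivity through an obstruction at the level of $T$-fixed points.

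The first step is to select a specific singular complete $\QQ$-factorial toric threefold $X$ whose fan $\Sigma$ contains a carefully chosen collection of non-unimodular maximal cones arranged so that $\opk_T^\circ(X)$, viewed as the ring of piecewise Laurent polynomials on $\Sigma$, is strictly larger than the subring generated by equivariant Picard classes. Candidates include modifications of the Payne-type examples where every equivariant vector bundle splits as a sum of equivariant line bundles; for such $X$, the image of $K_T^\circ(X) \to \opk_T^\circ(X)$ is precisely the symmetric algebra on $\Pic^T(X)$ modulo the natural relations, which corresponds to the piecewise-linear-exponential subring.

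Second, I would use Theorem~\ref{t.LRR} to compute images in terms of fixed-point data. The localization isomorphism embeds $\opk_T^\circ(X)$ into the product $\prod_{p \in X^T} S^{-1} R(T)$, and the image of $K_T^\circ(X)$ sits inside a distinguished subset cut out by the equivariant multiplicities $\eem^K(p)$ attached to cone generators. After forgetting equivariance via the commutative square
\begin{diagram}
K_T^\circ(X) & \rTo & K^\circ(X) \\
\dTo        &      & \dTo \\
\opk_T^\circ(X) & \rTo & \opk^\circ(X),
\end{diagram}
this constraint descends to a subring of $\opk^\circ(X)$ that I can characterize explicitly by reduction modulo the augmentation ideal of $R(T)$.

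Third, the heart of the proof is to exhibit a class $\xi \in K^\circ(X)$ whose image in $\opk^\circ(X)$ does not lie in that subring. A natural source is a non-equivariant vector bundle obtained by gluing equivariant local trivializations along non-$T$-invariant transition data over one singular chart; alternatively, one can start from a concrete element of $\opk^\circ(X)$ forced by the combinatorics to be non-equivariantly liftable and then realize it as the class of an actual vector bundle, using the resolution property of simplicial toric varieties. Applying the non-equivariant specialization of the Atiyah-Bott-type formula following the Corollary to Theorem~\ref{t.LRR}, one computes an invariant of $\xi$ that is incompatible with belonging to the image of any equivariant class.

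The main obstacle is producing a concrete witness class $\xi$ together with a verifiable obstruction: the framework of operational Riemann-Roch and Lefschetz-Riemann-Roch gives the detection mechanism, but the combinatorial geometry of $\Sigma$ must be tuned so that the equivariant multiplicities impose a genuine restriction on $\opk^\circ(X)$, and so that a specific non-equivariant bundle with nontrivial image can be constructed. I expect the example to be most naturally presented after first describing $X$ purely combinatorially, computing $\opk^\circ_T(X)$ and its subring of vector-bundle-generated classes explicitly in low degrees, and then verifying that the ordinary $K$-theory $K^\circ(X)$ — accessed via \v Cech computations on the smooth locus and the resolution property — contains a class visibly outside the reduced image.
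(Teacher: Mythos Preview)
Your proposal has a genuine gap: the detection mechanism you have chosen is not suited to the problem. The localization transformation of Theorem~\ref{t.LRR} inverts the multiplicative set $S$ generated by elements $1-\ee^{-\lambda}$ in $R(T)$, and these elements all map to zero under the forgetful map $R(T)\to\ZZ$. Consequently, the information captured by $S^{-1}\opk_T^\circ(X^T)$ and by equivariant multiplicities is destroyed upon passing to non-equivariant $K$-theory; there is no way to ``reduce modulo the augmentation ideal'' after localizing, and the Atiyah--Bott formula does not specialize to a non-equivariant statement. So the proposed obstruction cannot survive the restriction you are trying to analyze. In addition, the claim that the image of $K_T^\circ(X)\to\opk_T^\circ(X)$ is the subring generated by equivariant Picard classes is neither proved nor referenced, and the construction of an explicit non-equivariant bundle $\xi$ is left entirely unspecified.

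The paper's proof takes a completely different and much shorter route. It uses the Grothendieck--Verdier--Riemann--Roch transformation (Theorem~\ref{t.GRR}), not Lefschetz--Riemann--Roch, to pass from $K$-theory to Chow theory with rational coefficients. For the specific toric threefold $X=X(\Delta)$ with $\Delta$ the fan over the cube with vertices $(\pm1,\pm1,\pm1)$, it is already known from \cite{kp} that $A_T^*(X)_\QQ\to A^*(X)_\QQ$ is not surjective. One then assembles the commutative diagram
\[
\begin{array}{ccccccc}
 K_T^\circ(X)_\QQ & \to & \opk_T^\circ(X)_\QQ & \to & \opkK_T^\circ(X)_\QQ & \xrightarrow{\sim} & \aA_T^*(X)_\QQ \\
 \downarrow\gamma &  & \downarrow &  & \downarrow & & \downarrow\alpha \\
 K^\circ(X)_\QQ & \twoheadrightarrow & \opk^\circ(X)_\QQ & = & \opk^\circ(X)_\QQ & \xrightarrow{\sim} & A^*(X)_\QQ,
\end{array}
\]
in which the bottom-left arrow is surjective by \cite[Theorem~1.4]{ap} and the rightmost horizontal arrows are the Riemann--Roch isomorphisms. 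Since $\alpha$ is not surjective, a diagram chase shows $\gamma$ cannot be surjective. No explicit class or bundle is ever constructed; the obstruction is imported wholesale from Chow theory.
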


As a second application of our main theorems, in Section~\ref{ss:spherical}, we use localization to completely describe the equivariant operational $K$-theory of arbitrary spherical varieties in terms of fixed point data.
Our description is independent of recent results by  Banerjee and Can on
smooth spherical varieties \cite{ekt-sph}.

Some of these results were announced in \cite{anderson}.

\medskip

\noindent {\bf Acknowledgments.} We thank Michel Brion, William Fulton, Jos\'e Gon\-z\'alez, Gabriele Vezzosi, and Charles Weibel for helpful comments and conversations related to this project.

\section{Background on operational $K$-theory}

We work over a fixed ground field, which we assume to have characteristic zero in order to use resolution of singularities.  All schemes are separated and finite type, and all tori are split over the ground field.

\subsection{Equivariant $K$-theory and Chow groups}

Let $T$ be a torus, and let $M = \Hom(T,\GG_m)$ be its character group.  The representation ring $R(T)$ is naturally identified with the group ring $\ZZ[M]$, and we write both as $\bigoplus_{\lambda\in M} \ZZ\cdot \ee^\lambda$.

For a $T$-scheme $X$, let $K^T_\circ(X)$ and $K_T^\circ(X)$ be the Grothendieck groups of $T$-equivariant coherent sheaves and $T$-equivariant perfect complexes on $X$, respecitvely.  We write $A^T_*(X)$ and $A^*_T(X)$ for the equivariant Chow homology and equivariant operational Chow cohomology of $X$.  There are natural identifications
\[
  R(T) = K_T^\circ(\pt) = K^T_\circ(\pt) = \ZZ[M]
\]
and
\[
  \Lambda_T:= A_T^*(\pt) = A^T_*(\pt) = \Sym^*M.
\]
Choosing a basis $u_1,\ldots,u_n$ for $M$, we have $R(T) = \ZZ[\ee^{\pm u_1},\ldots,\ee^{\pm u_n}]$ and $\Lambda_T = \ZZ[u_1,\ldots,u_n]$.

A crucial fact is that both $K^T_\circ$ and $A^T_*$ satisfy a certain descent property.  An {\em equivariant envelope} is a proper $T$-equivariant map $X' \to X$ such that every $T$-invariant subvariety of $X$ is the birational image of some $T$-invariant subvariety of $X'$.  When $X' \to X$ is an equivariant envelope, there are exact sequences
\begin{align}
 A^T_*(X'\times_X X') \to A^T_*(X') \to A^T_*(X) \to 0 \label{e.chow-descent} \\
\intertext{and}
 K^T_\circ(X'\times_X X') \to K^T_\circ(X') \to K^T_\circ(X) \to 0 \label{e.k-descent}
\end{align}
of $\Lambda_T$-modules and $R(T)$-modules, respectively.
The Chow sequence admits an elementary proof (see \cite{kimura,payne-chow}); the sequence for $K$-theory seems to require more advanced techniques (\cite{gillet,ap}).

\subsection{Bivariant theories}\label{ss.bt}

We review some foundational notions on bivariant theories from \cite{fm} (see also \cite[\S4]{ap} or \cite{gk}).  Consider a category $\catC$ with a final object $\pt$, equipped with distinguished classes of \emph{confined} morphisms and \emph{independent} commutative squares.  A bivariant theory assigns a group $U(f\colon X \to Y)$ to each morphism in $\catC$, together with homomorphisms
\begin{align*}
  \cdot \colon U(X\xrightarrow{f} Y) \otimes U( Y \xrightarrow{g} Z)  &\to U( X \xrightarrow{g\circ f} Z) & & \text{(product)}, \\
 f_*\colon U(X \xrightarrow{h} Z ) & \to U(Y \to Z) & &\text{(pushforward)} , \text{ and} \\
 g^*\colon U(X \xrightarrow{f} Y) & \to U(X' \xrightarrow{f'} Y') & &\text{(pullback)},
\end{align*}
where for pushforward, $f\colon X \to Y$ is confined, and for pullback, the square
\begin{diagram}
 X' & \rTo^{f'} & Y' \\
 \dTo & &  \dTo_g \\
 X & \rTo^f & Y
\end{diagram}
is independent.  This data is required to satisfy axioms specifying compatibility with product, for composable morphisms, pushforward along confined morphisms, and pullback across independent squares.

Any bivariant theory determines a \emph{homology} theory $U_*(X) = U(X \to \pt)$, which is covariant for confined morphisms, and a \emph{cohomology} theory $U^*(X) = U(\mathrm{id}\colon X \to X)$, which is contravariant for all morphisms.  An element $\alpha$ of $U(f\colon X\to Y)$ determines a Gysin map $f^\alpha \colon U_*(Y) \to U_*(X)$, sending $\beta\in U_*(Y) = U(Y \to \pt)$ to $\alpha\cdot\beta \in U(X \to \pt) = U_*(X)$.  Similarly, if $f$ is confined, $\alpha$ determines a Gysin map $f_\alpha\colon U^*(X) \to U^*(Y)$, sending $\beta\in U^*(X) = U(X \to X)$ to $f_*(\beta\cdot \alpha) \in U( Y \to Y ) = U^*(Y)$.  A {\em canonical orientation} for a class of composable morphisms is a choice of elements $[f]\in U(f\colon X\to Y)$, one for each $f$ in the class, which respects product for compositions, with $[\id]=1$.  The Gysin maps determined by $[f]$ are denoted $f^!$ and $f_!$.

\subsection{Operational Chow theory and $K$-theory}\label{ss.chow-K}

As described above, a bivariant theory $U$ determines a homology theory.  Conversely, starting with any homology theory $U_*$, one can build an {\em operational} bivariant theory $\op U$, with $U_*$ as its homology theory, by defining elements of $\op U(X \to Y)$ to be collections of homomorphisms $U_*(Y') \to U_*(X')$, one for each morphism $Y' \to Y$ (with $X'=X\times_Y Y'$), subject to compatibility with pullback and pushforward.

We focus on the operational bivariant theories associated to equivariant $K$-theory of coherent sheaves $K^T_\circ(X)$ and Chow homology $A^T_*(X)$.  The category $\catC$ is $T$-schemes, confined morphisms are equivariant proper maps, and all fiber squares are independent.  Operators are required to commute with proper pushforwards and refined pullbacks for flat maps and regular embeddings. 

The basic properties of $A_T^*(X \to Y)$ can be found in \cite{fm,fulton-it,kimura,eg-eit}, and those of $\opk_T^\circ(X \to Y)$ are developed in \cite{ap,g.loc}.  The following properties are most important for our purposes.  We state them for $K$-theory, but the analogous statements also hold for Chow.

\begin{enumerate}[(a)]

\item Certain morphisms $f\colon X \to Y$, including regular embeddings and flat morphisms, come with a distinguished {\em orientation} class $[f] \in \opk_T^\circ(X \to Y)$, corresponding to refined pullback.  When both $X$ and $Y$ are smooth, an arbitrary morphism $f \colon X \to Y$ has an orientation class $[f]$, obtained by composing the classes of the graph $\gamma_f\colon X \to X \times Y$ (a regular embedding) with that of the (flat) projection $p\colon X \times Y \to Y$.

\medskip
\item For any $X$, there is a homomorphism from $K$-theory of perfect complexes to the contravariant operational $K$-theory, $K_T^\circ(X) \to \opk_T^\circ(X)$; there is also a canonical isomorphism $\opk_T^\circ(X\to\pt) \to K^T_\circ(X)$.

\medskip
\item  If $f\colon X \to Y$ is any morphism, and $g\colon Y\to Z$ is smooth, then there is a canonical {\em Poincar\'e isomorphism} $\opk_T^\circ(X\to Y) \to \opk_T^\circ(X \to Z)$, given by product with $[g]$.

\medskip
\item Combining the above, there are homomorphisms
\[
  K_T^\circ(X) \to \opk_T^\circ(X) \to K^T_\circ(X),
\]
which are isomorphisms when $X$ is smooth.

\end{enumerate}

The main tools for computing operational $K$ groups and Chow groups are the following two {\em Kimura sequences}, whose exactness is proved  for $K$-theory in \cite[Propositions~5.3 and 5.4]{ap} and for Chow theory in \cite[Theorems~2.3 and 3.1]{kimura}.  We continue to state only the $K$-theory versions.  First, suppose $Y' \to Y$ is an equivariant envelope, and let $X' = X \times_Y Y'$.  Then
\begin{equation}\label{e.kimura1}
  0 \to \opk_T^\circ (X\to Y) \to \opk_T^\circ (X'\to Y') \to \opk_T^\circ( X' \times_X X' \to Y' \times_Y Y' )
\end{equation}
is exact.  This is, roughly speaking, dual to the descent sequence \eqref{e.k-descent}.

Next, suppose $p\colon Y' \to Y$ is furthermore birational, inducing an isomorphism $Y'\setminus E \xrightarrow{\sim} Y \setminus B$ (where $E = f^{-1}B$).  Given $f\colon X \to Y$, define $A = f^{-1}B \subseteq X$ and $D = f'^{-1}E \subseteq X'$.  Then
\begin{equation}\label{e.kimura2}
  0 \to \opk_T^\circ (X \to Y) \to \opk_T^\circ( X' \to Y') \oplus \opk_T^\circ(A \to B) \to \opk_T^\circ (D \to E )
\end{equation}
is exact. (Only the contravariant part of this sequence is stated explicitly in \cite{ap}, but the proof of the full bivariant version is analogous, following \cite{kimura}.)

\begin{remark}\label{r.exact}
Exactness of the sequences \eqref{e.kimura1} and \eqref{e.kimura2} follow from exactness of the descent sequence \eqref{e.k-descent}.  Hence, if one applies an exact functor of $R(T)$-modules to $K_\circ^T$ before forming the operational bivariant theory, then the analogues of \eqref{e.kimura1} and \eqref{e.kimura2} are still exact.  
For example, given a multiplicative set $S\subseteq R(T)$, the Kimura sequences for $\mathrm{op}S^{-1}K_T^\circ$ are exact.
\end{remark}

\subsection{Kan extension}

By resolving singularities, the second Kimura sequence implies an alternative characterization of operational Chow theory and $K$-theory: they are {\em Kan extensions} of more familiar functors on smooth schemes.  This is a fundamental construction in category theory; see, e.g., \cite[\S X]{cwm}.

Suppose we have functors $I\colon \catA \to \catB$ and $F\colon \catA \to \catC$.  A right Kan extension of $F$ along $I$ is a functor $R=\Ran_I(F)\colon \catB \to \catC$ and a natural transformation $\gamma\colon R\circ I \Rightarrow F$, which is universal among such data: given any other functor $G\colon \catB \to \catC$ with a transformation $\delta\colon G\circ I \Rightarrow F$, there is a unique transformation $\eta \colon G \Rightarrow R$ so that the diagram
\begin{diagram}
 G\circ I &  & \rTo^\eta &  & R \circ I \\
     & \rdTo_\delta &     & \ldTo_\gamma \\
     &       &  F
\end{diagram}
commutes.  
The proof of the following lemma is an exercise.

\begin{lemma}\label{l.kan}
With notation as above, suppose that $F$ admits a right Kan extension $(R,\gamma)$ along $I$.  Assume $\gamma$ is a natural isomorphism.  Then if $T\colon \catC \to \catD$ is any functor, the composite $T\circ F$ admits a Kan extension along $I$, and there is a natural isomorphism
\[
  \Ran_I(T\circ F) \isom T\circ \Ran_I(F).
\]
\end{lemma}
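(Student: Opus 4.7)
The plan is to show that the pair $(T \circ R, \, T \ast \gamma)$ serves as a right Kan extension of $T \circ F$ along $I$. Once this is established, the stated natural isomorphism $\Ran_I(T \circ F) \isom T \circ \Ran_I(F)$ follows from uniqueness, up to unique natural isomorphism, of right Kan extensions.

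First, one observes that $T \ast \gamma \colon T \circ R \circ I \Rightarrow T \circ F$ is itself a natural isomorphism: this is automatic, since $\gamma$ is a natural isomorphism by hypothesis and any functor preserves isomorphisms componentwise. This provides the natural transformation accompanying the candidate Kan extension.

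The substantive step is the verification of the universal property. Given any functor $G \colon \catB \to \catD$ equipped with a natural transformation $\delta \colon G \circ I \Rightarrow T \circ F$, one must exhibit a unique $\eta \colon G \Rightarrow T \circ R$ satisfying
\[
  (T \ast \gamma) \circ (\eta \ast I) \; = \; \delta.
\]
The invertibility of $T \ast \gamma$ forces $\eta \ast I = (T \ast \gamma)^{-1} \circ \delta$, which pins down the components of $\eta$ along objects in the image of $I$. One then invokes the universal property of the original Kan extension $(R,\gamma)$ to extend this prescription to a well-defined natural transformation $\eta$ on all of $\catB$, and tracks uniqueness through the same universal property.

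The crux of the exercise is the concluding diagram chase, establishing existence, naturality, and uniqueness of $\eta$ off the image of $I$. I expect this to be routine formal manipulation that uses only the universal property of $(R,\gamma)$ and the hypothesis that $\gamma$ is a natural isomorphism; in particular, no preservation hypothesis on $T$ should be needed, which is exactly what makes this compatibility lemma useful for pushing $K$-theoretic or cohomological natural transformations across the Kan extension from smooth schemes to all schemes.
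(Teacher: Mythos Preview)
Your plan has a genuine gap at the crucial step. You write that one ``invokes the universal property of the original Kan extension $(R,\gamma)$ to extend this prescription to a well-defined natural transformation $\eta$ on all of $\catB$.'' But the universal property of $(R,\gamma)$ classifies transformations $G' \Rightarrow R$ for functors $G' \colon \catB \to \catC$; it says nothing about transformations $G \Rightarrow T \circ R$ when $G$ takes values in $\catD$. Knowing $\eta \ast I$ does not put you in a position to invoke that universal property, because there is no functor $\catB \to \catC$ in sight to play the role of $G'$.

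In fact this gap cannot be filled: the statement is false in the asserted generality. Take $\catA$ to be the terminal category, $\catB = \{0 \to 1\}$, $I(\ast)=0$, $\catC = \catD = \mathbf{Set}$, and $F(\ast)=c$. Then $R(0)=c$, $R(1)=\{\ast\}$ (the terminal set), and $\gamma = \id$ is an isomorphism. For any $T$ not preserving the terminal object---say $T = (-)\sqcup\{\ast\}$---one has $(T\circ R)(1) \not\cong \Ran_I(T\circ F)(1) = \{\ast\}$. The paper leaves the proof as an exercise, so there is no argument to compare against; but observe that in the only application (Lemma~\ref{l.order}) the functor $T$ is localization or completion, hence exact, and the proof there \emph{independently} verifies via Remark~\ref{r.exact} that the Kimura sequences hold after applying $T$, so that the argument of Proposition~\ref{p.kan} again produces the Kan extension. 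Some limit-preservation hypothesis on $T$ is genuinely required; your closing remark that ``no preservation hypothesis on $T$ should be needed'' is exactly where the intuition goes astray.
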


\noindent By \cite[Corollary X.3.3]{cwm}, the hypothesis that $\gamma$ be a natural isomorphism is satisfied whenever the functor $I\colon \catA \to \catB$ is fully faithful.  

For the embedding $I\colon (T\text{-}\mathbf{Sm})^{\mathrm{op}} \to (T\text{-}\mathbf{Sch})^{\mathrm{op}}$ of smooth $T$-schemes in all $T$-schemes, \cite[Theorem~5.8]{ap} shows that the contravariant functor $\opk_T^\circ$ is the right Kan extension of $K_T^\circ$.  
Similarly, operational Chow cohomology is the right Kan extension of the intersection ring on smooth schemes.  
Analogous properties hold for the full bivariant theories, with the same proofs, as we now explain.

Let $\catB'$ be the category whose objects are equivariant morphisms of $T$-schemes $X \to Y$; a morphism $f\colon (X'\to Y') \to (X \to Y)$ is a fiber square
\begin{diagram}
  X' & \rTo^{f'} & X \\
  \dTo &  & \dTo \\
  Y' & \rTo^f & Y.
\end{diagram}
Let $\catA'$ be the same, but where the objects are $X\to Y$ with $Y$ smooth.  Let $\catA=(\catA')^{\mathrm{op}}$ and $\catB=(\catB')^{\mathrm{op}}$, and let $I\colon\catA \to \catB$ be the evident embedding.  The functor $F\colon \catA \to (R(T)\text{-}\mathbf{Mod})$ is given on objects by $F(X\to Y) = K^T_\circ(X)$.  To a morphism $(X'\to Y') \to (X \to Y)$, the functor assigns the refined pullback $f^!\colon K^T_\circ(X) \to K^T_\circ(X')$.  Explicitly, for a sheaf $\shfF$ on $X$, we have $f^![\shfF] = \sum (-1)^i [\tor_i^Y(\OO_{Y'},\shfF)]$, which is well-defined since $f$ has finite Tor-dimension.

\begin{proposition} \label{p.kan}
With notation as above, operational bivariant $K$-theory is the right Kan extension of $F$ along $I$.
\end{proposition}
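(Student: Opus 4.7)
The plan is to follow the strategy of \cite[Theorem~5.8]{ap}, which establishes the analogous statement for the contravariant functor $\opk_T^\circ$, and to adapt it to the bivariant setting using the Poincar\'e isomorphism and the bivariant Kimura sequence~\eqref{e.kimura2}. In characteristic zero, equivariant resolution of singularities supplies the birational equivariant envelopes required for the induction on dimension.

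First I construct the counit $\gamma\colon \opk_T^\circ \circ I \Rightarrow F$. When $Y$ is smooth, the Poincar\'e isomorphism of \S\ref{ss.chow-K}(c) identifies $\opk_T^\circ(X \to Y)$ with $\opk_T^\circ(X \to \pt) = K^T_\circ(X) = F(X \to Y)$. Bivariant compatibility of the Poincar\'e isomorphism with pullback across fiber squares whose targets are smooth makes $\gamma$ a natural isomorphism on $\catA$.

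For universality, let $G\colon \catB \to R(T)\text{-}\mathbf{Mod}$ be any functor equipped with $\delta\colon G \circ I \Rightarrow F$. I define $\eta_{(X \to Y)}\colon G(X \to Y) \to \opk_T^\circ(X \to Y)$ by induction on $\dim Y$. For $Y$ smooth, set $\eta = \gamma^{-1} \circ \delta$. In general, choose a birational equivariant resolution $p\colon Y' \to Y$, let $B \subset Y$ be the closed locus where $p$ fails to be an isomorphism, and form $E = p^{-1}(B)$, $X' = X \times_Y Y'$, $A = X \times_Y B$, and $D = X' \times_{Y'} E$. The sequence~\eqref{e.kimura2},
\[
0 \to \opk_T^\circ(X \to Y) \to \opk_T^\circ(X' \to Y') \oplus \opk_T^\circ(A \to B) \to \opk_T^\circ(D \to E),
\]
is exact. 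Since $Y'$ is smooth, $\gamma^{-1} \circ \delta_{(X' \to Y')}$ produces a map $G(X' \to Y') \to \opk_T^\circ(X' \to Y')$; since $\dim B, \dim E < \dim Y$, the inductive hypothesis produces compatible maps on $G(A \to B)$ and $G(D \to E)$. Functoriality of $G$ and naturality of $\delta$ along the evident morphisms in $\catB$ from $(D \to E)$ to $(X' \to Y')$ and to $(A \to B)$ force the image of $G(X \to Y)$ in the middle term to equalize in $\opk_T^\circ(D \to E)$; the Kimura sequence then lifts it uniquely, defining $\eta_{(X \to Y)}$.

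The main obstacle is verifying that $\eta$ is independent of the chosen resolution and is natural on all of $\catB$. For independence, I dominate any two resolutions by a common third and compare the induced Kimura lifts using the inductive hypothesis on the exceptional loci of the dominating resolution. For naturality along a morphism $(X'' \to Y'') \to (X \to Y)$ in $\catB$, I pull back a resolution of $Y$ along $Y'' \to Y$, refining to a resolution of $Y''$ if necessary, and check pullback-compatibility of the Kimura lifts using the pullback-compatibility of refined Gysin maps that define $F$ together with the bivariant pullback axioms for $\opk_T^\circ$. The triangle identity $\gamma \circ (\eta \ast I) = \delta$ is automatic on smooth targets by construction, and uniqueness of $\eta$ follows from injectivity in~\eqref{e.kimura2} combined with the forced definition on smooth $Y$.
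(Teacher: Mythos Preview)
Your proposal is correct and follows essentially the same approach as the paper. The paper's proof is a one-line reference to \cite[Theorem~5.8]{ap}, invoking the Kimura sequence~\eqref{e.kimura2} together with induction on dimension; you have simply unpacked that argument in the bivariant setting, using the Poincar\'e isomorphism for smooth targets, equivariant resolution of singularities, and the exactness of~\eqref{e.kimura2} to build and uniquely determine the factorization~$\eta$.
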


\begin{proof}
Just as in \cite[Theorem~5.8]{ap}, one applies the Kimura sequence \eqref{e.kimura2}, together with induction on dimension, to produce a natural homomorphism $G(X \to Y) \to \opk_T^\circ(X \to Y)$ for any functor $G$ whose restriction to smooth schemes has a natural transformation to $F$.
\end{proof}

Since the only input in proving the proposition is the Kimura sequence, a similar statement holds if one applies an exact functor of $R(T)$-modules, as pointed out in Remark~\ref{r.exact}.

\begin{lemma}\label{l.order}
Let $S\subseteq R(T)$ be a multiplicative set.  There is a canonical isomorphism of functors
\[
  S^{-1}\opk_T^\circ(X \to Y) \isom \mathrm{op}S^{-1}K_T^\circ(X \to Y),
\]
where the right-hand side is the operational theory associated to $S^{-1}K^T_\circ(X)$.

Similarly, let $J\subseteq R(T)$ be an ideal, and let $\hat{(-)}$ denote $J$-adic completion of an $R(T)$-module.  There is a canonical isomorphism of functors
\[
   \hat{\opk_T^\circ(X \to Y)} \isom \opkK_T^\circ(X \to Y),
\]
where the right-hand side is the operational theory associated to $\hat{K}^T_\circ(X)$.
\end{lemma}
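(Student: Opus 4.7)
The plan is to deduce both isomorphisms from the Kan extension characterization of Proposition~\ref{p.kan}, using Lemma~\ref{l.kan} and Remark~\ref{r.exact} to interchange the operations of ``localize/complete'' and ``form the operational theory.''

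First, by Proposition~\ref{p.kan}, $\opk_T^\circ \isom \Ran_I F$ where $F(X\to Y) = K^T_\circ(X)$ and $I\colon \catA \to \catB$ is the embedding described there. To apply Lemma~\ref{l.kan} I need the counit $\gamma\colon \opk_T^\circ \circ I \Rightarrow F$ to be a natural isomorphism; this follows because for $Y$ smooth, the Poincar\'e isomorphism (property (c) of Section~\ref{ss.chow-K}) composed with $\opk_T^\circ(X \to \pt) \isom K^T_\circ(X)$ (property (b)) identifies $\opk_T^\circ(X \to Y)$ with $K^T_\circ(X)$ naturally. Specializing Lemma~\ref{l.kan} once to $S^{-1}(-)$ and once to $\hat{(-)}$ on $R(T)$-modules then gives canonical natural isomorphisms
\[
  S^{-1}\opk_T^\circ \isom \Ran_I(S^{-1}F) \qquad \text{and} \qquad \hat{\opk_T^\circ} \isom \Ran_I(\hat{F}).
\]

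The other side of each asserted isomorphism, $\mathrm{op}S^{-1}K_T^\circ$ and $\opkK_T^\circ$, is by construction the operational bivariant theory associated to the homology theory $S^{-1}K^T_\circ$, respectively $\hat{K}^T_\circ$. The proof of Proposition~\ref{p.kan} relies only on exactness of the Kimura sequence \eqref{e.kimura2}, which by Remark~\ref{r.exact} persists after applying any exact functor on $R(T)$-modules. Hence the same inductive argument identifies $\mathrm{op}S^{-1}K_T^\circ \isom \Ran_I(S^{-1}F)$ and $\opkK_T^\circ \isom \Ran_I(\hat{F})$. Uniqueness of right Kan extensions up to canonical natural isomorphism then yields the claimed identifications, and the fact that these identifications are canonical makes them automatically compatible with the bivariant operations.

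The main obstacle will be the completion case. Exactness of $S^{-1}(-)$ is automatic, so localization goes through cleanly. For $\hat{(-)}$, however, invoking Remark~\ref{r.exact} requires that completion preserve the right exact descent sequence \eqref{e.k-descent}; since $R(T)$ is Noetherian, Artin--Rees gives this on finitely generated modules, but $K^T_\circ(X)$ need not be finitely generated for arbitrary $T$-schemes $X$. Handling this carefully may require passing to the pro-system $(K^T_\circ(X)/J^n K^T_\circ(X))_n$ in place of the naive completion, or reducing to a smooth envelope before completing. Once this exactness point is settled, the remainder of the argument is a formal consequence of the universal property of Kan extensions.
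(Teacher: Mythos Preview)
Your approach is essentially identical to the paper's: both arguments invoke Proposition~\ref{p.kan} to recognize the operational theories as right Kan extensions, use Remark~\ref{r.exact} to see that the localized and completed versions still satisfy the Kimura sequences and hence are also Kan extensions, and then appeal to Lemma~\ref{l.kan}. The paper's proof is simply the two-sentence summary of exactly the steps you outline.

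The one substantive difference is that the paper asserts without comment that ``localization and completion are exact functors of $R(T)$-modules,'' whereas you correctly flag that $J$-adic completion is only guaranteed exact on finitely generated modules over the Noetherian ring $R(T)$, and $K^T_\circ(X)$ is not obviously finitely generated in general. This is a legitimate concern that the paper does not address; your instinct to justify it (or to work with the pro-system) is sound, though for the purposes of matching the paper's argument you may simply take the exactness as given, as the authors do.
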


\begin{proof}
Since localization and completion are exact functors of $R(T)$-modules, the right-hand sides satisfy the Kimura sequences and are therefore Kan extensions, as in Proposition~\ref{p.kan}.  The statements now follow from Lemma~\ref{l.kan}.
\end{proof}

A common special case of the first isomorphism is tensoring by $\QQ$, so we will use abbreviated notation: for any $R(T)$-module $B$, we let $B_\QQ = B \otimes_\ZZ \QQ$, and write $\opk^\circ_{T}(X\to Y)_\QQ$ for the bivariant theory associated to $K^T_\circ(X)_\QQ$.

While localization and completion do not commute in general, they do in the main case of interest to us: the completion of $R(T)$ along the augmentation ideal, and the localization given by $\otimes\QQ$.  Thus we may write $\hat{K}^T_\circ(X)_\QQ$ unambiguously, and we write $\opkK^\circ_T(X\to Y)_\QQ$ for the associated operational bivariant theory.

\begin{remark}
The standing hypotheses of characteristic zero is made chiefly to be able to use resolution of singularities in proving the above results.  When using $\QQ$-coefficients, it is tempting to appeal to de Jong's alterations to prove an analogue of the Kimura sequence.  However, if $X' \to X$ is an alteration, with $X'$ smooth, and $X'\setminus E \to X \setminus S$ \'etale, we do not know whether the sequence
\[
  0 \to \opk^\circ(X)_\QQ \to \opk^\circ(X')_\QQ \oplus \opk^\circ(S)_\QQ \to \opk^\circ(E)_\QQ
\]
is exact.  For special classes of varieties that admit smooth equivariant envelopes, our arguments work in arbitrary characteristic.  The special case of toric varieties is treated in \cite{ap}.  In Section~\ref{ss:spherical}, we carry out analogous computations more generally, for spherical varieties.
\end{remark}

\begin{remark}\label{r.weaker}
The proofs of the Poincar\'e isomorphisms (\cite[Proposition~17.4.2]{fulton-it} and \cite[Proposition~4.3]{ap}) only require commutativity of operations with pullbacks for regular embeddings and smooth morphisms.  If one defines operational bivariant theories replacing the axiom of commutativity with flat pullback with the {\it a priori} weaker axiom of commutativity with smooth pullback, the Kan extension properties of $A_T^*$ and $\opk_T^\circ$ show that the result is the same.
\end{remark}

\subsection{Grothendieck transformations and Riemann-Roch}\label{ss.gt}

As motivation and context for the proofs in the following sections, we review the bivariant approach to Riemann-Roch formulas via canonical orientations, following \cite{fm}.

We return to the notation of \S\ref{ss.bt}, so $\catC$ is a category with a final object and distinguished classes of confined morphisms and independent squares, and $U$ is a bivariant theory on $\catC$.  A class of morphisms in $\catC$ carries {\em canonical orientations} for $U$ if, for each $f\colon X\to Y$ in the class, there is $[f]_U\in U(X\to Y)$, such that
\begin{enumerate}[(i)]
\item for $X\xrightarrow{f} Y \xrightarrow{g} Z$, $[f]_U\cdot[g]_U = [gf]_U$ in $U(X\to Z)$; and

\medskip
\item $[\id_X]_U = 1$ in $U^*(X)$.

\medskip
\end{enumerate}
We omit the subscript and simply write $[f]$ when the bivariant theory is understood. In $K^\circ_\perf(X\to Y)$, proper flat morphisms have canonical orientations given by $[f]=[\OO_X]$.  A canonical orientation $[f]$ determines functorial Gysin homomorphisms $f^!\colon U_*(Y) \to U_*(X)$ and, if $f$ is confined, $f_!\colon U^*(X)\to U^*(Y)$.

Now consider another category $\bar\catC$ with a bivariant theory $\bar{U}$.  Let $F\colon \catC \to \bar\catC$ be a functor preserving final objects, confined morphisms, and independent squares.  We generally write $X$, $f$, etc., for objects and morphisms of $\catC$, and $\bar{X}$, $\bar{f}$, etc., for those of $\bar\catC$.  When no confusion seems likely, we sometimes abbreviate the functor $F$ by writing $\bar{X}$ and $\bar{f}$ for the images under $F$ of an object $X$ and morphism $f$, respectively.
A {\em Grothendieck transformation} is a natural map $U(X \to Y) \to \bar{U}(\bar{X} \to \bar{Y})$, compatible with product, pullback, and pushforward.

In the language of \cite{fm}, a {\em Riemann-Roch formula} for a Grothendieck transformation $t\colon U(X\to Y) \to \bar{U}(\bar{X}\to\bar{Y})$ is an equation
\[
  t([f]_U) = u_f\cdot [\bar{f}]_{\bar{U}},
\]
for some $u_f\in \bar{U}^*(\bar{X})$.  For the homology and cohomology components, this translates into commutativity of the diagrams
\begin{diagram}
 U^*(X)   &\rTo^{t^.} & \bar{U}^*(\bar{X}) \\
 \dTo^{f_!} &   &  \dTo_{\bar{f}_!(\;\; \cdot u_f)} \\
 U^*(Y)   & \rTo^{t^.} & \bar{U}^*(\bar{Y})
\end{diagram}
and
\begin{diagram}
 U_*(Y)   &\rTo^{t_.} & \bar{U}_*(\bar{Y}) \\
 \dTo^{f^!} &   &  \dTo_{u_f\cdot \bar{f}^!} \\
 U_*(X)   & \rTo^{t_.} & \bar{U}_*(\bar{X}).
\end{diagram}

Our focus will be on operational bivariant theories built from homology theories, with the operational Chow and $K$-theory discussed in \S\ref{ss.chow-K} as the main examples.  The general construction is described in \cite{fm}; see also \cite{gk}.  Briefly, a homology theory $U_*$ is a functor from $\catC$ to groups, covariant for confined morphisms.  The associated operational bivariant theory $\op U$ is defined by taking operators $(c_g)\in \op U(f\colon X \to Y)$ to be collections of homomorphisms $c_g\colon U_*(Y')\to U_*(X')$, one for each independent square
\begin{diagram}
 X' & \rTo^{f'} & Y' \\
 \dTo & &  \dTo_g \\
 X & \rTo^f & Y,
\end{diagram}
subject to compatibility with pullback across independent squares and pushforward along confined morphisms.

This is usually refined by specifying a collection $\mathcal{Z}$ of distinguished operators, and passing to the smaller bivariant theory $\op U_\mathcal{Z}$ consisting of operators that commute with the Gysin maps determined by $\mathcal{Z}$.  The collection $\mathcal{Z}$ is part of the data of the bivariant theory.  For example, in operational Chow or $K$-theory, $\mathcal{Z}$ consists of the orientation classes $[f]$ associated to regular embeddings or flat morphisms, as described in \S\ref{ss.chow-K}.  When $\mathcal{Z}$ is clear from context, we omit the subscript, and write simply $\op U$.

We construct Grothendieck transformations using the following observation:

\begin{proposition}\label{p.transf}
Let $\catC$ and $\bar\catC$ be categories with homology theories $U_*$ and $\bar{U}_*$, respectively, with associated operational bivariant theories $\op U$ and $\op\bar{U}$.  Suppose $F\colon \catC \to \bar\catC$ is a functor preserving final objects, confined morphisms, and independent squares, with a left adjoint $L\colon \bar\catC \to \catC$, such that for all objects $\bar{X}$ of $\bar\catC$, the canonical map $\bar{X} \to FL(\bar{X})$ is an isomorphism.

Then any natural isomorphism $\tau\colon U_* \to \bar{U}_*\circ F$ extends canonically to a Grothendieck transformation $t\colon \op U \to \op\bar{U}$. 
Furthermore, if all operators in $\bar{\mathcal{Z}}$ are contained in the subgroups generated by $t(\mathcal{Z})$, then $t$ induces a Grothendieck transformation $t\colon \op U_{\mathcal{Z}} \to \op\bar{U}_{\bar{\mathcal{Z}}}$.
\end{proposition}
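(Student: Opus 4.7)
The plan is transport of structure through $\tau$. Given $c = (c_g) \in \op U(f\colon X \to Y)$, I would construct $t(c) \in \op\bar{U}(\bar{f}\colon \bar{X} \to \bar{Y})$ componentwise. For an independent square in $\bar\catC$ with vertical arrow $\bar{g}\colon \bar{Y}' \to \bar{Y} = F(Y)$, the adjunction converts $\bar{g}$ into a morphism $g\colon L(\bar{Y}') \to Y$ in $\catC$; form the fiber product $X \times_Y L(\bar{Y}')$ and consider the associated operator $c_g \colon U_*(L(\bar{Y}')) \to U_*(X \times_Y L(\bar{Y}'))$. Since $F$ preserves independent squares and the unit $\bar{Y}' \to FL(\bar{Y}')$ is an isomorphism, applying $F$ identifies $X \times_Y L(\bar{Y}')$ with $\bar{X}'$. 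Then define
\[
t(c)_{\bar{g}} := \tau_{X \times_Y L(\bar{Y}')} \circ c_g \circ \tau_{L(\bar{Y}')}^{-1} \colon \bar{U}_*(\bar{Y}') \to \bar{U}_*(\bar{X}').
\]

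The verification that $t(c)$ is a well-defined operator in $\op\bar{U}$, and that $c \mapsto t(c)$ respects product, pullback, and pushforward of operators, proceeds by a uniform translation: a diagram of homology maps in $\bar\catC$ commutes if and only if, via the adjoint $L$ and the natural isomorphism $\tau$, the corresponding diagram in $\catC$ commutes. Compatibility of $t(c)$ with pullback across independent squares in $\bar\catC$ reduces to the same axiom for $c$, using functoriality of $L$. Compatibility with pushforward along a confined morphism $\bar{p}$ reduces similarly, once one knows that $L(\bar{p})$ is confined in $\catC$; this follows because the unit identifies $\bar{p}$ with $FL(\bar{p})$ and $F$ preserves the class of confined morphisms, together with the analogous preservation by $L$ in the examples of interest. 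The Grothendieck-transformation axioms for $t$ then fall out of naturality of $\tau$ as a transformation of homology theories, applied to the product, pullback, and pushforward squares.

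For the refined statement, observe that if $c$ commutes with the Gysin map of every $z \in \mathcal{Z}$, then by the transport formula $t(c)$ commutes with $t(z)$ for every such $z$; commuting with a fixed collection extends to commuting with every element of the subgroup they generate, so under the hypothesis that $\bar{\mathcal{Z}}$ is contained in the subgroup generated by $t(\mathcal{Z})$ we obtain $t(c) \in \op\bar{U}_{\bar{\mathcal{Z}}}$. The step I expect to be the main obstacle is precisely the pushforward axiom: one must track carefully how confined morphisms in $\bar\catC$ and their fiber pullbacks are represented, through $L$ and the unit isomorphism, by confined morphisms and independent squares in $\catC$, so that the operator axioms for $c$ genuinely apply after $\tau$-conjugation. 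The explicit use of the fully faithful adjoint $L$ and the preservation properties of $F$ are exactly what is needed to close this loop.
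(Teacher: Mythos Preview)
Your construction is exactly the one the paper gives: define $t(c)_{\bar g}$ as the conjugate $\tau\circ c_g\circ\tau^{-1}$, where $g\colon L(\bar Y')\to Y$ is the adjoint of $\bar g$ and the identification $\bar X'\cong F(X\times_Y L(\bar Y'))$ comes from $F$ preserving independent squares together with the unit isomorphism. The paper then simply says the remaining verification is ``a straightforward verification of the axioms,'' so your more detailed sketch of that verification is consistent with, and somewhat more explicit than, the paper's own proof. Your flag about needing $L$ to preserve confined morphisms is a genuine point the paper does not spell out; it is automatic in every application made (the identity functor for Theorems~\ref{t.rr} and \ref{t.arr}, and the inclusion of trivially-acted schemes for Theorem~\ref{t.local}), which is presumably why it is elided.
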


In the proposition and proof below, $\bar{X}$, etc., denotes an arbitrary object of $\bar\catC$, and we write $F(X)$, etc., for the images of objects under the functor $F$.

\begin{proof}
The transformation is constructed as follows.  Suppose we are given $c\in \op U(X \to Y)$ and a map $\bar{g}\colon \bar{Y}' \to F(Y)$.  Continuing our notation for fiber products, let $\bar{X}'= F(X) \times_{F(Y)} \bar{Y}'$ and $X' = X \times_Y L(\bar{Y}')$.  By the hypotheses on $F$ and $L$, there is a natural isomorphism $\bar{X}' \xrightarrow{\sim} F(X')$.

Now define $t(c)_{\bar{g}} \colon \bar{U}_*(\bar{Y}') \to \bar{U}_*(\bar{X}')$ as the composition
\[
   \bar{U}_*(\bar{Y}') = \bar{U}_*(FL(\bar{Y}')) \xrightarrow{\tau^{-1}} U_*(L(\bar{Y}')) \xrightarrow{c_g} U_*(X') \xrightarrow{\tau} \bar{U}_*(F(X')) =  \bar{U}_*(\bar{X}'),
\]
where $g\colon L(\bar{Y}') \to Y$ corresponds to $\bar{g}\colon \bar{Y}' \to F(Y)$ by the adjunction.  
The proof that this defines a Grothendieck transformation is a straightforward verification of the axioms.
\end{proof}

The prototypical example of a Grothendieck transformation and Riemann-Roch formula relates $K$-theory to Chow.  When $f$ is a proper smooth morphism, the class $u_f$ is given by the Todd class of the relative tangent bundle, $\td(T_f)$.  The transformation $t^.$ is the Chern character, and the commutativity of the first diagram is the Grothendieck-Riemann-Roch theorem,
\[
  \ch(f_*(\alpha)) = f_*(\ch(E)\cdot \td(T_f)).
\]
The commutativity of the second diagram is the Verdier-Riemann-Roch theorem; there is a unique functorial transformation $t_. = \tau \colon K_\circ(X) \to A_*(X)_\QQ$ that extends the Chern character for smooth varieties, and satisfies
\[
\tau(f^!(\beta)) = \td(T_f)\cdot f^!(\tau(\beta))
\]
for all $\beta\in K_\circ(Y)$, whenever $f\colon X \to Y$ is an lci morphism. These two theorems were refined in \cite{bfm}, and \cite{fg}, respectively, to include the case where $f$ is a proper lci morphism of possibly singular varieties.

\section{Operational Grothendieck-Verdier-Riemann-Roch} \label{s.GVRR}

The equivariant Riemann-Roch theorem of Edidin and Graham \cite{eg-rr} states that there are natural homomorphims
\[
  K^T_\circ(X) \to \hat{K}^T_\circ(X)_\QQ \xrightarrow{\tau} \aA^T_*(X)_\QQ,
\]
the second of which is an isomorphism.  Here $\aA^T_*(X)$ is the completion along the ideal of positive-degree elements in $A_T^*(\pt) = \Sym^*M$.  Combining with Proposition~\ref{p.transf} and Lemma~\ref{l.order}, we obtain a bivariant Riemann-Roch theorem.

\begin{theorem}\label{t.rr}
There are Grothendieck transformations
\[
  \opk_T^\circ(X\to Y) \to \opkK_T^\circ(X\to Y)_\QQ \xrightarrow{t} \aA_T^*(X\to Y)_\QQ,
\]
the second of which is an isomorphism.

These transformations are compatible with the change-of-groups homomorphisms constructed in Appendix~\ref{s.A.change-groups}.  If $T' \subseteq T$ is a subtorus, the diagram
\begin{diagram}
   \opk_T^\circ(X\to Y) &\rTo & \opkK_T^\circ(X\to Y)_\QQ & \rTo & \aA_T^*(X\to Y)_\QQ \\
     \dTo               &     & \dTo                &       & \dTo \\
   \opk_{T'}^\circ(X\to Y) & \rTo & \opkK_{T'}^\circ(X\to Y)_\QQ & \rTo & \aA_{T'}^*(X\to Y)_\QQ
\end{diagram}
commutes.
\end{theorem}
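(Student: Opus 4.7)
The plan is to construct each Grothendieck transformation by lifting a homology-level natural transformation to the operational bivariant level. By Lemma~\ref{l.order}, the target $\opkK_T^\circ(X\to Y)_\QQ$ is canonically identified with the operational theory associated to the homology functor $\hat K^T_\circ(-)_\QQ$, and $\aA_T^*(X\to Y)_\QQ$ with that of $\hat A^T_*(-)_\QQ$, so it suffices to work with these.

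For the second transformation, I would apply Proposition~\ref{p.transf} directly with $\catC=\bar\catC$ the category of $T$-schemes, $F=L=\id$, and $\tau\colon \hat K^T_\circ(-)_\QQ \to \hat A^T_*(-)_\QQ$ the Edidin-Graham isomorphism. Since $\tau$ is a natural isomorphism, the proposition produces a Grothendieck transformation $t$; applying it again to $\tau^{-1}$ provides a two-sided inverse. The hypothesis on distinguished operators $\mathcal{Z}$ reduces to checking that $\tau$ commutes with the refined Gysin maps for flat morphisms and regular embeddings, which is built into the Baum-Fulton-MacPherson construction as carried out equivariantly by Edidin and Graham.

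For the first transformation, Proposition~\ref{p.transf} does not apply directly because $K^T_\circ \to \hat K^T_\circ \otimes \QQ$ is not an isomorphism. I would instead construct it by hand: each operator $c=(c_g) \in \opk_T^\circ(X\to Y)$ consists of $R(T)$-linear maps $c_g\colon K^T_\circ(Y') \to K^T_\circ(X')$, and since completion along the augmentation ideal followed by $\otimes\QQ$ is exact on $R(T)$-modules, we obtain induced maps $\hat c_g\colon \hat K^T_\circ(Y')_\QQ \to \hat K^T_\circ(X')_\QQ$. Because completion and rationalization commute with the relevant pushforwards and refined pullbacks on equivariant $K$-homology, the resulting family defines an operator in the target, and the assignment $c \mapsto (\hat c_g)$ respects product, pushforward, and pullback.

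Finally, for the stated compatibility with subtorus restriction, both the map $K^T_\circ \to \hat K^T_\circ \otimes \QQ$ and the Edidin-Graham map $\tau$ are natural with respect to the restriction $R(T) \to R(T')$ at the homology level, so functoriality of the above constructions yields commutativity of the diagram. The main obstacle is the careful verification that $\tau$ commutes with the distinguished Gysin maps (so that the Grothendieck transformation produced by Proposition~\ref{p.transf} lands in the refined operational theory rather than merely the unrefined ambient one), together with the bookkeeping needed to identify the restriction homomorphisms of Appendix~\ref{s.A.change-groups} with the homology-level restriction maps used in our construction.
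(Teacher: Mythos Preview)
Your overall strategy matches the paper's: apply Proposition~\ref{p.transf} with $F=\id$ and $\tau$ the Edidin--Graham isomorphism to build $t$, and treat the first arrow as literally ``complete and tensor by $\QQ$'' (the paper dismisses this step in one sentence, using Lemma~\ref{l.order} exactly as you suggest).

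However, there is a genuine gap in your handling of the distinguished operators $\mathcal{Z}$. You write that the verification ``reduces to checking that $\tau$ commutes with the refined Gysin maps,'' and that this is ``built into the Baum--Fulton--MacPherson construction.'' But $\tau$ does \emph{not} commute with Gysin maps: the Verdier--Riemann--Roch formula says precisely that
\[
  \tau\bigl(h^!_K(\beta)\bigr) = \td(T_h)\cdot h^!_A\bigl(\tau(\beta)\bigr),
\]
so there is a Todd-class twist. What must actually be shown is that if an operator $c$ commutes with $h^!_K$, then the conjugate $t(c)=\tau\circ c\circ\tau^{-1}$ commutes with $h^!_A$. The paper carries out this computation explicitly: one inserts $\td\cdot\td^{-1}$, uses Verdier--Riemann--Roch to trade $\tau^{-1}(\td\cdot h^!_A\alpha)$ for $h^!_K(\tau^{-1}\alpha)$, applies commutativity of $c$ with $h^!_K$, and then runs the same exchange in reverse. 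The Todd classes appear twice with opposite effect and cancel. Without this cancellation argument, Proposition~\ref{p.transf} only yields a transformation into the unrefined ambient operational theory, not into $\aA_T^*(X\to Y)_\QQ$. (Remark~\ref{r.weaker} is also invoked to reduce from flat to smooth pullbacks, so that one has a virtual tangent bundle and hence a Todd class to work with.)

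For the change-of-groups compatibility, the paper cites \cite[Proposition~3.2]{eg-rr} and notes that $\td(T_{T/T'})=1$ since the tangent bundle of $T/T'$ is trivial; this is the concrete input that makes the restriction square commute, and it is worth naming rather than leaving as ``bookkeeping.''
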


\begin{proof}
The transformation from $\opk_T^\circ$ to $(\opkK_T^\circ)_\QQ$ is completion and tensoring by $\QQ$, so there is nothing to prove.  To obtain the second transformation, we apply Proposition~\ref{p.transf}, taking $F$ to be the identity functor.  The only subtlety is in showing that $t$ takes the operations commuting with classes in $\mathcal{Z}$ (refined pullbacks for smooth morphisms and regular embeddings, in $K$-theory) to ones commuting with those in $\bar{\mathcal{Z}}$ (the same pullbacks in Chow theory).  (By Remark~\ref{r.weaker}, commutativity with flat pullback can be weakened to just smooth pullback without affecting the bivariant theories $A_T^*$ and $\opk_T^\circ$.)
Consider the diagram
\begin{diagram}
 X'' & \rTo & Y'' & \rTo  &  Z''  \\
\dTo & & \dTo_{h'}    &       & \dTo_{h} \\
 X' & \rTo & Y'    & \rTo  &  Z' \\
\dTo & & \dTo_g \\
X  & \rTo^{f}  & Y,
\end{diagram}
where $h$ is a smooth morphism or a regular embedding.  Let $\td=\td(T_h)$ be the equivariant Todd class of the virtual tangent bundle of $h$, and let $\alpha \in \aA_T^*(Y')_\QQ$ and $c \in \opkK_T^\circ( X \to Y )_\QQ$.  Using the equivariant Riemann-Roch isomorphism $\tau\colon (\kK^T_\circ)_\QQ \to (\aA^T_*)_\QQ$, we compute
\begin{align*}
  \tau ( c_{gh'} ( \tau^{-1}( h^! \alpha ) ) ) &= \tau ( c_{gh'} ( \tau^{-1}( \td\cdot \td^{-1}\cdot h^! \alpha ) ) ) \\
  &= \td^{-1} \cdot\tau ( c_{gh'} ( h^!( \tau^{-1}  \alpha  ) ) ) \\
  &= \td^{-1}\cdot \tau ( h^! ( c_g( \tau^{-1}  \alpha  ) ) ) \\
  &= \td^{-1}\cdot \td\cdot h^! ( \tau ( c_g( \tau^{-1}  \alpha  ) ) ) \\
  &= h^!  ( \tau ( c_g( \tau^{-1}  \alpha  ) ) ),
\end{align*}
as required.

For compatibility with change-of-groups, apply \cite[Proposition~3.2]{eg-rr}, observing that the tangent bundle of $T/T'$ is trivial, so its Todd class is $1$.
\end{proof}

\section{Adams-Riemann-Roch}\label{s.ARR}

We briefly recall that $K_T^\circ(X)$ is a $\lambda$-ring and hence carries \emph{Adams operations}.  These are ring endomorphisms $\psi^j$, indexed by positive integers $j$, and characterized by the properties:

\begin{enumerate}[(a)]

\medskip
\item For any line bundle $L$, $\psi^j[L] = [L^{\otimes j}]$, and

\medskip
\item For any morphism $f\colon X \rightarrow Y$, $f^* \circ \psi^j = \psi^j \circ f^*$. \label{item:compatible}

\medskip
\end{enumerate}
Adams operations do not commute with (derived) push forward under proper morphisms, but the failure to commute is quantified precisely by the equivariant Adams-Riemann-Roch theorem, at least when $f$ is a projective local complete intersection morphism and $X$ has the $T$-equivariant resolution property, as is the case when $X$ is smooth.  The role of the Todd class for the Adams-Riemann-Roch theorem is played by the equivariant Bott elements $\theta^j(T_f^\vee) \in K^\circ_T(X')$, where $T_f^\vee$ is the virtual cotangent bundle of the lci morphism $f$.  The Bott element $\theta^j$ is a homomorphism of (additive and multiplicative) monoids
\[
 \theta^j \colon (K_T^\circ(X)^+, + ) \to (K_T^\circ(X), \cdot),
\]
where $K_T^\circ(X)^+ \subseteq K_T^\circ(X)$ is the monoid of {\em positive elements}, generated---as a monoid---by classes of vector bundles.
It is characterized by the properties

\begin{enumerate}[(a)]

\medskip
\item For any equivariant line bundle $L$, $\theta^j(L) = 1 + L + \cdots + L^{j-1}$, and

\medskip
\item For any equivariant morphism $g\colon X'' \rightarrow X'$, $g^* \theta^j = \theta^j g^*$.

\medskip
\end{enumerate}
For example, $\theta^j(1)=j$, and more generally $\theta^j(n) =j^n$.  If $j$ is inverted in $K^\circ_T(X)$, then the Bott element $\theta^j$ extends to all of $K_T^\circ(X)$, and becomes a homomorphism from the additive to the multiplicative group of $\hat{K}_T^\circ(X)[j^{-1}]$.  That is, $\theta^j(c)$ is invertible in $\widehat K^\circ_T(X)[{j}^{-1}]$, for any $c \in K^\circ_T(X)$.

\begin{theorem}[{\cite[Theorem~4.5]{Kock98}}]\label{t.ARR-kock}
Let $X$ be a $T$-variety with the resolution property, and let $f\colon X' \rightarrow X$ be an equivariant projective lci morphism.  Then, for every class $c \in K^\circ_T(X')$,
\begin{equation}\label{e.ARR}
\psi^j f_* (c) = f_*( \theta^j(T_f^\vee)^{-1} \cdot \psi^j(c)),
\end{equation}
in $\widehat K^\circ_T(X)[{j}^{-1}]$.
\end{theorem}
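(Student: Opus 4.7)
The plan is to follow the classical strategy for Grothendieck--Riemann--Roch-type results: factor the projective lci morphism $f$ as a closed embedding into a smooth ambient total space followed by a smooth projection, verify the formula separately on the two factors, and assemble via the multiplicativity of the Bott element in short exact sequences.

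First I would use the resolution property on $X$ together with projectivity of $f$ to factor $f\colon X' \to X$ as $f = p \circ i$, where $i\colon X' \hookrightarrow P$ is an equivariant regular closed immersion into an equivariant projective bundle $P = \PP(E) \to X$ and $p$ is the structure map. Since $f$ is lci and $p$ is smooth, $i$ is automatically a regular embedding, and the exact triangle of cotangent complexes gives $T_f^\vee = T_i^\vee + i^* T_p^\vee$ in $K_T^\circ(X')$. The Bott element is a monoid homomorphism, and once $j$ is inverted and one passes to the completion $\hat K_T^\circ[j^{-1}]$ it extends to a homomorphism on the full additive group, yielding $\theta^j(T_f^\vee) = \theta^j(T_i^\vee) \cdot i^* \theta^j(T_p^\vee)$. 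Combined with the projection formula and the naturality of $\psi^j$ under pullback (property~\eqref{item:compatible}), this reduces the statement to the two cases $f = p$ and $f = i$.

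For the smooth projective case $p\colon \PP(E) \to X$, the projective bundle formula presents $K_T^\circ(P)$ as a free $K_T^\circ(X)$-module with basis given by powers of $\xi = [\OO(1)]$. The projection formula reduces \eqref{e.ARR} to a check on basis elements $c = \xi^k$; the relative Euler sequence computes $T_p^\vee$ and hence $\theta^j(T_p^\vee)$, and the rest is a direct calculation from the explicit push-forward of $\xi^k$. For the embedding $i\colon X' \hookrightarrow P$ with conormal bundle $N^\vee$, the Koszul resolution identifies $i_* \OO_{X'}$ with $\lambda_{-1}(N^\vee)$ in $K_T^\circ(P)$. The Cartier identity $\psi^j(\lambda_{-1}(N^\vee)) = \theta^j(N^\vee)^{-1} \cdot \lambda_{-1}(N^\vee)$ in $\hat K_T^\circ(P)[j^{-1}]$, which follows from the splitting principle and the line-bundle case (where it is immediate from $\psi^j[L] = [L^{\otimes j}]$ and the geometric series expansion of $\theta^j(L)^{-1}$), together with the self-intersection formula $i^* i_*(\alpha) = \lambda_{-1}(N^\vee) \cdot \alpha$ and the projection formula, yields \eqref{e.ARR} for $i$.

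The main obstacle I would expect is working rigorously in the completion: the Bott element is naturally defined only on positive classes of $K_T^\circ$, and the identities $\theta^j(a+b) = \theta^j(a) \theta^j(b)$ and the invertibility of $\theta^j$ on virtual classes like $T_f^\vee$ both require passing to $\hat K_T^\circ[j^{-1}]$ to make sense. One must therefore carefully track where each equation is being asserted and verify that Cartier's identity and the splitting principle carry through in the equivariant setting (here the resolution property on $X$ is what lets the splitting principle operate). A secondary subtlety is arranging the factorization $f = p \circ i$ equivariantly, which again uses the resolution hypothesis to produce a suitable equivariant vector bundle $E$ with $X'$ embedded as a regular subscheme of $\PP(E)$.
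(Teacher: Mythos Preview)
The paper does not give a proof of this theorem; it is stated with attribution to K\"ock \cite[Theorem~4.5]{Kock98} and used as a black box for the constructions that follow in \S\ref{s.ARR}.  Your sketch is the classical factorization strategy---regular embedding into a projective bundle, followed by the bundle projection, with separate verification on each factor---and this is essentially the approach in K\"ock's paper (and in the non-equivariant antecedents in \cite{sga6} and \cite{fulton-lang}).

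One correction: the Cartier-type identity for a line bundle $L$ is
\[
  \psi^j(\lambda_{-1}(L)) = \theta^j(L)\cdot \lambda_{-1}(L),
\]
not $\theta^j(L)^{-1}\cdot\lambda_{-1}(L)$; indeed $1-L^j = (1+L+\cdots+L^{j-1})(1-L)$.  This is consistent with \eqref{e.ARR} for $f=i$, because the virtual cotangent of a regular embedding is $T_i^\vee = -[N^\vee]$ in $K$-theory, so $\theta^j(T_i^\vee)^{-1} = \theta^j(N^\vee)$.  Relatedly, the identification ``$i_*\OO_{X'} = \lambda_{-1}(N^\vee)$ in $K_T^\circ(P)$'' is only literally valid when the conormal bundle extends to $P$; the honest argument for the embedding step goes through deformation to the normal cone rather than a global Koszul resolution, which is what K\"ock does.
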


We will define Adams operations in operational $K$-theory, and prove an operational bivariant generalization of this formula.  First, we must review the construction of the covariant Adams operations
\[
  \psi_j \colon K^T_\circ(X) \to \hat{K}^T_\circ(X)[j^{-1}].
\]
A (non-equivariant) version for quasi-projective schemes appears in \cite[\S7]{soule}.  We eliminate the quasi-projective hypotheses using Chow envelopes; see Remark~\ref{r.adams-env}.

For quasi-projective $X$, choose a closed embedding $\iota\colon X \hookrightarrow M$ in a smooth variety $M$.  By $K^\circ_T(M\text{ on }X)$, we mean the Grothendieck group of equivariant perfect complexes on $M$ which are exact on $M\setminus X$.  This is isomorphic to $\opk_T^\circ(X\hookrightarrow M)$, which in turn is identified with $K^T_\circ(X)$ via the Poincar\'e isomorphism.  We sometimes will denote this isomorphism by $\iota_*\colon K^T_\circ(X) \xrightarrow{\sim} K_T^\circ(M\text{ on }X)$.

Working with perfect complexes on $M$ has the advantage of coming with evident Adams operations: one defines endomorphisms $\psi^j$ of the $K_T^\circ(M)$-module $K_T^\circ(M\text{ on }X)$ by the same properties as the usual Adams operations.  To make this independent of the embedding, we must correct by the Bott element.  Here is the definition for quasi-projective $X$: the module homomorphism $\psi_j\colon K^T_\circ(X) \to \hat{K}^T_\circ(X)[j^{-1}]$ is defined by the formula
\[
  \psi_j(\alpha) := \theta^j(T_M^\vee)^{-1}\cdot \psi^j(\iota_*\alpha),
\]
where $T_M$ is the tangent bundle of $M$.

\begin{lemma}
The homomorphism $\psi_j$ is independent of the choice of embedding $X\hookrightarrow M$.  Furthermore, it commutes with proper pushforward: if $f\colon X \to Y$ is an equivariant proper morphism of quasi-projective schemes, then $f_*\psi_j(\alpha) = \psi_j(f_*\alpha)$ for all $\alpha\in K^T_\circ(X)$.
\end{lemma}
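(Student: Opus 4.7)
The plan is to handle the two assertions separately: independence of the embedding by a diagonal trick plus Kock's Adams-Riemann-Roch for a graph embedding, and pushforward compatibility by factoring $f$ through a smooth projective projection and invoking Kock's theorem again.

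For independence, given two closed equivariant embeddings $\iota_i \colon X \hookrightarrow M_i$ into smooth quasi-projective varieties, I would form the diagonal-type embedding $\iota = (\iota_1, \iota_2) \colon X \hookrightarrow M_1 \times M_2$ and show that the definition of $\psi_j$ via $\iota$ agrees with the definition via $\iota_1$; by symmetry this suffices. The key is the factorization $\iota = J \circ \gamma$, where $\gamma \colon X \hookrightarrow X \times M_2$ is the graph of $\iota_2$ (a regular embedding with normal bundle $\iota_2^* T_{M_2}$, since $M_2$ is smooth) and $J = \iota_1 \times \mathrm{id} \colon X \times M_2 \hookrightarrow M_1 \times M_2$ is the base change of $\iota_1$ along the smooth projection $p_1 \colon M_1 \times M_2 \to M_1$. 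Using flat base change for $p_1$, the projection formula for $\gamma$, and Kock's Adams-Riemann-Roch (Theorem~\ref{t.ARR-kock}) applied to the regular embedding $\gamma$, one rewrites $\theta^j(T_{M_1 \times M_2}^\vee)^{-1} \cdot \psi^j(\iota_* \alpha)$ in terms of $\theta^j(T_{M_1}^\vee)^{-1} \cdot \psi^j((\iota_1)_* \alpha)$, a factor of $\theta^j(\iota_2^* T_{M_2}^\vee)^{-1}$ coming from the normal bundle of $\gamma$, and the excess $M_2$-contribution in the decomposition $\theta^j(T_{M_1 \times M_2}^\vee) = p_1^* \theta^j(T_{M_1}^\vee) \cdot p_2^* \theta^j(T_{M_2}^\vee)$; these two $M_2$-contributions cancel.

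For pushforward compatibility, since $f \colon X \to Y$ is proper with $X$ quasi-projective, choose a closed equivariant embedding $\iota_X \colon X \hookrightarrow \PP^N$, so that $f$ factors as $X \xrightarrow{(f, \iota_X)} Y \times \PP^N \xrightarrow{q} Y$. Choose also a closed equivariant embedding $\iota_Y \colon Y \hookrightarrow \bar{Y}$ into a smooth quasi-projective variety, producing a closed equivariant embedding $\tilde{\iota}_X \colon X \hookrightarrow \bar{Y} \times \PP^N$ into a smooth variety, and note that the smooth projective projection $p \colon \bar{Y} \times \PP^N \to \bar{Y}$ satisfies $p \circ \tilde{\iota}_X = \iota_Y \circ f$. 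Unwinding $\psi_j(f_* \alpha)$ using $\iota_Y$ yields $\theta^j(T_{\bar{Y}}^\vee)^{-1} \cdot \psi^j(p_* (\tilde{\iota}_X)_* \alpha)$; Kock's Adams-Riemann-Roch applied to the smooth projective morphism $p$ (which has the equivariant resolution property since $\bar{Y} \times \PP^N$ is smooth) produces a factor $\theta^j(T_p^\vee)^{-1}$ inside the pushforward, and the projection formula together with the splitting $\theta^j(T_{\bar{Y} \times \PP^N}^\vee) = p^* \theta^j(T_{\bar{Y}}^\vee) \cdot \theta^j(T_p^\vee)$ collapses the expression to $p_* (\tilde{\iota}_X)_* \psi_j(\alpha) = f_* \psi_j(\alpha)$.

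The main obstacle is the bookkeeping in the first part: one must verify that Kock's formula applied to the regular embedding $\gamma$ yields precisely $\psi^j(\gamma_* \OO_X) = \gamma_*(\theta^j(\iota_2^* T_{M_2}^\vee)^{-1})$, and then track how this Bott-element factor, after $J$-pushforward and identification via the Poincar\'e isomorphism in $\hat{K}^T_\circ(X)[j^{-1}]$, combines with the pulled-back contributions of $\theta^j(T_{M_1 \times M_2}^\vee)$ to produce exact cancellation against $p_2^* \theta^j(T_{M_2}^\vee)$. Once independence of embedding is established, the pushforward statement is a routine consequence of Kock's theorem applied to the smooth projective projection $p$.
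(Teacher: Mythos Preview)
Your plan for pushforward compatibility is essentially the paper's: factor $f$ through a closed immersion into $\bar Y \times \PP^N$ followed by the smooth projective projection to $\bar Y$, and apply K\"ock's formula to that projection. (A minor quibble: for quasi-projective $X$ you only get a locally closed $\iota_X\colon X \to \PP^N$, but since $f$ is proper the composite $(f,\iota_X)\colon X \to Y\times\PP^N$ is still a closed immersion, so nothing is lost.)

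For independence of the embedding, however, your route diverges from the paper's and runs into a real difficulty. The paper also passes to the product embedding $X \hookrightarrow M_1 \times M_2$, but then applies K\"ock's formula to the \emph{smooth projection} $\pi\colon M_1 \times M_2 \to M_1$: since $T_\pi^\vee$ is the pullback of $T_{M_2}^\vee$, one line of projection-formula algebra gives $\psi_j^{M_1}(\alpha) = \psi_j^{M_1 \times M_2}(\alpha)$, and the argument never leaves smooth ambient varieties. Your factorization $\iota = J \circ \gamma$ instead routes through the generally singular scheme $X \times M_2$. Applying K\"ock's theorem to the regular embedding $\gamma$ produces a formula for $\psi^j(\gamma_* c)$ in $\hat K_T^\circ(X \times M_2)[j^{-1}]$ with $c \in K_T^\circ(X)$; but your input $\alpha$ lives in $K^T_\circ(X)$, which for singular $X$ need not agree with $K_T^\circ(X)$, so you cannot feed $\alpha$ into that formula. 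Even setting this aside, you must then push forward along $J = \iota_1 \times \mathrm{id}$ and compare the Adams operation on $X \times M_2$ with the one on $M_1 \times M_2$ that actually enters the definition of $\psi_j$. Since $\iota_1$ (and hence $J$) is not lci when $X$ is singular, there is no Adams--Riemann--Roch formula available for $J_*$, and ``flat base change for $p_1$'' does not supply one. This is more than bookkeeping: the paper's choice to apply ARR to the projection between smooth ambients, rather than to the graph, is precisely what sidesteps the problem.
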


\begin{proof}
To see $\psi_j$ is independent of $M$, we apply the Adams-Riemann-Roch theorem for nonsingular quasi-projective varieties.   Given two embeddings $\iota\colon X \hookrightarrow M$ and $\iota'\colon X \hookrightarrow M'$, consider the product embedding $X\hookrightarrow M \times M'$, with projections $\pi$ and $\pi'$.  Let us write $\theta^j_M$ for $\theta^j(T_M^\vee)$, etc., and suppress notation for pullbacks, so for instance $\theta^j(T_\pi^\vee) = \theta^j_{M'}$.  Let us temporarily write $\psi_j^M(\alpha) = (\theta_M^j)^{-1}\cdot \psi^j(\iota_*\alpha)$ for the Adams operation with respect to the embedding in $M$, and similarly for $M'$ and $M\times M'$.

Using the projection $\pi\colon M \times M' \to M$ to compare embeddings, we have
\begin{align*}
  \psi_j^M(\alpha) &= (\theta_M^j)^{-1}\cdot \psi^j(\iota_*\alpha) \\
                   &= (\theta_{M'}^j)\cdot (\theta_{M\times M'}^j)^{-1} \cdot \psi^j(\pi_*(\iota\times\iota')_*\alpha) \\
                   &= \pi_*\big( (\theta_{M\times M'}^j)^{-1} \cdot \psi^j((\iota\times\iota')_*\alpha) \big) \qquad (\text{by \eqref{e.ARR}}) \\
                   &= \psi_j^{M\times M'}(\alpha),
\end{align*}
and similarly one sees $\psi_j^{M'}(\alpha) = \psi_j^{M\times M'}(\alpha)$.

Covariance for equivariant proper maps is similar.  Given such a map $f\colon X \to Y$ between quasi-projective varieties, one can factor it as in the following diagram:
\begin{diagram}
  X & \rInto & M \times Y & \rInto &  M \times M' \\
   & \rdTo_f &     \dTo   &       & \dTo \\
   &        &  Y        & \rInto   &   M'.
\end{diagram}
Here $M$ and $M'$ are smooth schemes into which $X$ and $Y$ embed, respectively.  Abusing notation slightly, we write
\[
 f_*\colon K_T^\circ( M\times M' \text{ on }X ) \to K_T^\circ( M'\text{ on }Y )
\]
for the pushforward homomorphism corresponding to $f_*\colon K^T_\circ(X) \to K^T_\circ(Y)$ under the canonical isomorphisms.  Computing as before, we have
\begin{align*}
 f_*\psi_j(\alpha) &= f_*\big( (\theta_{M\times M'}^j)^{-1}\cdot \psi^j(\iota_*\alpha) \big) \\
                  &= f_*\big( (\theta_{M}^j)^{-1}(\theta_{M'}^j)^{-1}\cdot \psi^j(\iota_*\alpha) \big)  \\
                  &= (\theta_{M'}^j)^{-1} \psi^j(f_*\alpha) \qquad (\text{by \eqref{e.ARR}}) \\
                  &= \psi_j(f_*\alpha),
\end{align*}
as claimed.
\end{proof}

\begin{remark}\label{r.adams-env}
To define covariant Adams operations for a general variety $X$, we choose an equivariant Chow envelope $X' \to X$, with $X'$ quasi-projective, and apply the descent sequence \eqref{e.k-descent}:
\begin{diagram}
  K^T_\circ(X'\times_X X') & \rTo & K^T_\circ(X') & \rTo & K^T_\circ(X) & \rTo & 0 \\
  \dTo_{\psi_j}     &             & \dTo_{\psi_j}  &     & \dDashto_{\psi_j} \\
  \hat{K}^T_\circ(X'\times_X X')[j^{-1}] & \rTo & \hat{K}^T_\circ(X')[j^{-1}] & \rTo & \hat{K}^T_\circ(X)[j^{-1}] & \rTo & 0   .
\end{diagram}
The two vertical arrows on the left are the Adams operations constructed above for quasi-projective schemes, and the corresponding square commutes thanks to covariance; this constructs the dashed arrow on the right.
\end{remark}

\begin{lemma}\label{l.adams-isom}
The Adams operations $\psi_j$ induce isomorphisms $\hat{K}^T_\circ(X)[j^{-1}] \xrightarrow{\sim} \hat{K}^T_\circ(X)[j^{-1}]$.
\end{lemma}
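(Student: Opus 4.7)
The plan is to prove the lemma in two stages: first reduce to $X$ quasi-projective via Chow envelope descent, then handle the quasi-projective case by embedding into a smooth scheme and applying an augmentation-filtration argument.

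For the reduction, fix a $T$-equivariant Chow envelope $p\colon X' \to X$ with $X'$ quasi-projective; then $X' \times_X X'$ is also quasi-projective. Since both $I$-adic completion (with $I \subseteq R(T)$ the augmentation ideal) and localization at $j$ are exact functors on $R(T)$-modules, applying them to the descent sequence \eqref{e.k-descent} yields the right-exact sequence
\[
  \hat{K}^T_\circ(X'\times_X X')[j^{-1}] \to \hat{K}^T_\circ(X')[j^{-1}] \to \hat{K}^T_\circ(X)[j^{-1}] \to 0,
\]
on which $\psi_j$ acts vertically, commuting with the maps by its covariance for proper pushforward (established in the preceding lemma). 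A direct diagram chase then shows that if $\psi_j$ is an isomorphism on the first two terms, it is an isomorphism on the third.

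For the quasi-projective case, choose a closed $T$-equivariant embedding $\iota\colon X \hookrightarrow M$ into a smooth $T$-variety $M$. Under the Poincar\'e isomorphism $K^T_\circ(X) \isom K_T^\circ(M\text{ on }X)$, the operator $\psi_j$ takes the form $\alpha \mapsto \theta^j(T_M^\vee)^{-1}\cdot\psi^j(\alpha)$. Since $\theta^j(T_M^\vee)$ has augmentation $j^{\dim M}$, it is a unit in $\hat{K}_T^\circ(M)[j^{-1}]$, and multiplication by it is an automorphism, reducing the question to whether $\psi^j$ itself is an isomorphism on the $I$-adic completion of $K_T^\circ(M\text{ on }X)[j^{-1}]$. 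The key computation
\[
  \psi^j(1-\ee^{-\lambda}) = (1-\ee^{-\lambda})\cdot(1+\ee^{-\lambda}+\cdots+\ee^{-(j-1)\lambda}),
\]
whose second factor has augmentation $j$ and is thus a unit in $\hat{R}(T)[j^{-1}]$, shows that $\psi^j$ preserves the $I$-adic filtration and acts on the $p$-th associated graded as multiplication by $j^p$ up to a unit. Hence $\psi^j$ is an isomorphism on each positive-degree associated graded piece after inverting $j$; since both source and target are $I$-adically complete and Hausdorff, this lifts to an isomorphism of completions.

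The main obstacle is the action on the bottom piece $K_T^\circ(M\text{ on }X)/I\cdot K_T^\circ(M\text{ on }X)$, where the unit-times-$j^p$ form is not immediate: handling this requires a classical $\gamma$-filtration argument for the non-equivariant $K$-theory of the finite-dimensional scheme $X$, which ensures that the non-equivariant Adams operation becomes an isomorphism after inverting $j$ because the $\gamma$-filtration is bounded and $\psi^j$ acts as $j^p$ modulo $F^{p+1}_\gamma$ on the $p$-th graded piece.
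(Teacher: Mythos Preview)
Your descent reduction to the quasi-projective case is sound and parallels part of the paper's argument. The gap is in the $I$-adic filtration step. Your key computation shows that $\psi^j$ acts as multiplication by $j^p$ on $I^p/I^{p+1}$ \emph{inside $R(T)$}, but this does not transfer to the module $N = K_T^\circ(M\text{ on }X)$ in the way you claim. For $m = \sum r_i n_i \in I^pN$ with $r_i\in I^p$ one finds
\[
  \psi^j(m) \equiv j^p \sum r_i\,\psi^j(n_i) \pmod{I^{p+1}N},
\]
so the induced endomorphism of $I^pN/I^{p+1}N$ is $j^p$ times the map $\overline{\sum r_i n_i}\mapsto \overline{\sum r_i \psi^j(n_i)}$, which still carries the Adams operation on the $n_i$ and is \emph{not} a unit in any evident sense. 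The positive-degree graded pieces are therefore no easier than the bottom piece: control of $\psi^j$ on $N/IN$ is needed as input at every level, and one must then argue separately that this propagates (the surjection $(I^p/I^{p+1})\otimes_\ZZ (N/IN)\twoheadrightarrow I^pN/I^{p+1}N$ is not generally an isomorphism when $N$ fails to be flat over $R(T)$). You also tacitly assume $N/IN$ coincides with non-equivariant $K_\circ(X)$, which is a separate theorem you would need to invoke.

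The paper avoids this by a different route. It first proves the \emph{non-equivariant} statement: for smooth $X$ the $\gamma$-filtration on $K^\circ(X)$ has $\psi^j$ acting as $j^n$ on $F_\gamma^n/F_\gamma^{n+1}$, so inverting $j$ makes $\psi^j$ an automorphism; for quasi-projective $X$ one restricts the $\gamma$-filtration from $K^\circ(M)$ to $K^\circ(M\text{ on }X)\isom K_\circ(X)$; and general $X$ follows by descent. The equivariant completion is then handled in one stroke by observing that $\hat{K}^T_\circ(X)[j^{-1}]$ is the inverse limit of non-equivariant groups $K_\circ(\mathbb{E}\times^T X)[j^{-1}]$ over finite-dimensional approximations $\mathbb{E}\to\mathbb{B}$ to the universal $T$-bundle, and $\psi_j$ is an automorphism on each term by the non-equivariant case. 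This Borel-approximation step cleanly separates the torus direction from the $X$ direction and avoids any direct analysis of the $I$-adic filtration on the module.
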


\begin{proof}
We start with the special case where $X$ is smooth and $T$ is trivial.  In this case, one sees that $\psi^j\colon K^\circ(X) \to K^\circ(X)$ becomes an isomorphism after inverting $j$ using the filtration 
by the submodules $F_\gamma^n \subset K^\circ(X)$ spanned by $\gamma$-operations of weight at least $n$.  A general fact about $\lambda$-rings is that $\psi^j$ preserves the $\gamma$-filtration, and acts on the factor $F_\gamma^n/F_\gamma^{n+1}$ as multiplication by $j^n$.  (See, e.g., \cite[\S III]{fulton-lang} for general facts about $\gamma$-operations and this filtration.)  Inverting $j$ therefore makes $\psi^j$ an automorphism of $K^\circ(X)[j^{-1}]$.  Since the Bott elements $\theta^j$ also become invertible, it follows that $\psi_j$ is an automorphism of $K_\circ(X)[j^{-1}] \isom K^\circ(X)[j^{-1}]$.

Still assuming $T$ is trivial, we now allow $X$ to be singular.  If $X$ is quasi-projective, embed it as $X\hookrightarrow M$.  Restricting the $\gamma$-filtration from $K^\circ(M)$ to $K^\circ(M\text{ on }X) \isom K_\circ(X)$, the above argument shows that $\psi_j$ becomes an isomorphism after inverting $j$.  
For general $X$, apply descent as in Remark~\ref{r.adams-env}.

Finally, the completed equivariant groups $\hat{K}^T_\circ(X)[j^{-1}]$ are a limit of non-equivariant groups $K_\circ(\mathbb{E}\times^T X)[j^{-1}]$, taken over finite-dimensional approximations $\mathbb{E} \to \mathbb{B}$ to the universal principal $T$-bundle \cite[\S2.1]{eg-rr}.  Since $\psi_j$ induces automorphisms on each term in the limit, it also induces an automorphism of $\hat{K}^T_\circ(X)[j^{-1}]$.
\end{proof}

\begin{theorem}\label{t.arr}
There are Grothendieck transformations
\[
  \opk_T^\circ(X\to Y) \xrightarrow{\psi^j} \opkK_T^\circ(X\to Y)[j^{-1}],
\]
that specialize to $\psi_j\colon \hat{K}^T_\circ(X) \to \hat{K}^T_\circ(X)[j^{-1}]$ when $Y$ is smooth.

These operations commute with the change-of-groups homomorphisms, and with the Grothendieck-Verdier-Riemann-Roch transformations of Theorem~\ref{t.rr}.
\end{theorem}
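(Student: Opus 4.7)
The plan is to construct the Grothendieck transformation in two steps, parallel to the proof of Theorem~\ref{t.rr}. First, Lemma~\ref{l.order} gives a tautological completion-and-inversion map
\[
  \opk_T^\circ(X\to Y) \to \opkK_T^\circ(X\to Y)[j^{-1}],
\]
coming from applying the exact functor $(-)\,\hat{\otimes}_{R(T)}R(T)[j^{-1}]$ to the homology theory $K^T_\circ$. It then suffices to produce a Grothendieck \emph{automorphism} of $\opkK_T^\circ(X\to Y)[j^{-1}]$ which, on homology, recovers the automorphism $\psi_j$ of $\hat K^T_\circ(-)[j^{-1}]$ from Lemma~\ref{l.adams-isom}. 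By Lemma~\ref{l.order} the target is the operational theory of $\hat K^T_\circ(-)[j^{-1}]$, so Proposition~\ref{p.transf} applies directly with $F=\id$, $L=\id$, and $\tau=\psi_j$.

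The main obstacle is verifying the distinguished-operator hypothesis in Proposition~\ref{p.transf}: the induced automorphism of the operational theory must carry operators that commute with refined pullbacks $h^!$ for smooth morphisms and regular embeddings to operators with the same property. Imitating the proof of Theorem~\ref{t.rr}, this reduces to the covariant form of Adams-Riemann-Roch along $h$, namely the identity
\[
  \psi_j \circ h^! \;=\; \theta^j(T_h^\vee)^{-1}\cdot h^! \circ \psi_j
\]
on $\hat K^T_\circ(-)[j^{-1}]$. I would establish this first for quasi-projective $Y$ and $Y'$ by embedding both into smooth ambient spaces, factoring $h$ through a regular embedding into $Y\times N$ (for $N$ smooth) and a smooth projection, and applying Kock's Theorem~\ref{t.ARR-kock} to the lci pieces, in the style of the proof that $\psi_j$ is independent of the choice of embedding. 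Descent along an equivariant Chow envelope, as in Remark~\ref{r.adams-env}, extends the identity to arbitrary $T$-schemes. Once this identity is in hand, the compatibility with $h^!$ is a short manipulation in which $\theta^j(T_h^\vee)$ plays the role of $\td(T_h)$ in the calculation displayed in the proof of Theorem~\ref{t.rr}.

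To see that the transformation specializes to $\psi_j$ when $Y$ is smooth, observe that the Poincar\'e isomorphism $\opkK_T^\circ(X\to Y)[j^{-1}] \isom \hat K^T_\circ(X)[j^{-1}]$ evaluates an operator on the fundamental class of $Y$, and our construction from Proposition~\ref{p.transf} acts there as the homology map $\psi_j$ by design. Compatibility with the change-of-groups homomorphisms of Appendix~\ref{s.A.change-groups} follows from the naturality of the covariant $\psi_j$ under restriction to a subtorus $T'\subseteq T$ (the relative tangent bundle of $T/T'$ being trivial), together with the uniqueness built into the Kan-extension characterization of Proposition~\ref{p.kan}. Compatibility with the Grothendieck-Verdier-Riemann-Roch transformations of Theorem~\ref{t.rr} is likewise checked at the level of homology theories, where it reduces to the classical fact that under the Edidin-Graham isomorphism $\tau$ the Adams operation $\psi_j$ on $\hat K^T_\circ(X)_\QQ$ corresponds to multiplication by $j^n$ on degree $n$ in $\hat A^T_*(X)_\QQ$; Kan-extension uniqueness then promotes this identity to the bivariant statement.
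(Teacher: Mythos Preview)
Your proposal is correct and follows essentially the same route as the paper: apply Proposition~\ref{p.transf} with $F=\id$ and $\tau=\psi_j$ (using Lemma~\ref{l.adams-isom}), then compose with completion and inversion of $j$. The paper abbreviates the distinguished-operator verification by saying ``one proceeds exactly as for Theorem~\ref{t.rr}''; you correctly unpack this as the covariant Adams--Riemann--Roch identity for $h^!$, with $\theta^j(T_h^\vee)$ playing the role of $\td(T_h)$ in the displayed computation there. For the compatibility statements, the paper invokes embeddings and Chow descent directly while you invoke Kan-extension uniqueness via Proposition~\ref{p.kan}; given Lemma~\ref{l.order} these are two phrasings of the same reduction to the smooth/quasi-projective case.
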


The statement that these generalized Adams operations commute with the Grothendieck-Verdier-Riemann-Roch transformation means that the diagram
\begin{diagram}
  \opk_T^\circ(X\to Y) &\rTo^{\psi^j}&  \opkK_T^\circ(X \to Y)[j^{-1}] \\
   \dTo^t    &    &  \dTo_t \\
   \aA_T^*(X \to Y)_\QQ &\rTo^{\psi^j_A} & \aA_T^*(X \to Y)_\QQ
\end{diagram}
commutes, where $\psi_A^j$ is defined to be multiplication by $j^k$ on $A_T^k(X\to Y)_\QQ$.

\begin{proof}
To construct the transformation, one proceeds exactly as for Theorem~\ref{t.rr}: taking $F$ to be the identity functor, we apply Proposition~\ref{p.transf} to the natural isomorphism $\psi_j \colon \hat{K}^T_\circ(-)[j^{-1}] \to \hat{K}^T_\circ(-)[j^{-1}]$.  Composing the resulting Grothen\-dieck transformation with the one given by inverting $j$ and completing produces the desired Adams operation.  This agrees with $\psi_j$ on $K^T_\circ(X) = \opk_T^\circ(X\to\pt)$ by construction, so it also agrees with $\psi_j$ for $K^T_\circ(X) = \opk_T^\circ(X\to Y)$ when $Y$ is smooth, using the Poincar\'e isomorphism.

Commutativity with the change-of-groups homomorphism is evident from the definition.  Commutativity with $t$ comes from the corresponding fact for the Chern character in the smooth case \cite[\S III]{fulton-lang}; the general case follows using embeddings of quasi-projective varieties and Chow descent.
\end{proof}

The Adams-Riemann-Roch formula from the Introduction is a consequence.

\begin{remark}
The Adams operations on the cohomology component $\opk_T^\circ(X)$ have the following simple and useful alternative construction.  Since $\opk_T^\circ$ is the right Kan extension of $K_T^\circ$ on smooth schemes, there is a natural isomorphism
\begin{equation}\label{e.opk-limit}
\opk^\circ_T(X) \cong \varprojlim_{g \colon X' \rightarrow X} K^\circ_T(X'),
\end{equation}
where the limit is taken over $T$-equivariant morphisms to $X$ from smooth $T$-varieties $X'$.  Hence we may define
\[
\psi^j\colon \opk_T^\circ(X) \to \opkK_T^\circ(X)[j^{-1}]
\]
as the limit of Adams operations on $K_T^\circ(X')$.  Similarly, for a projective equivariant lci morphism $f\colon X \to Y$, and any element $c \in \opk^\circ_T(X)$, the identity
\[
\psi^j f_* (c) = f_*( \theta^j(T_f^\vee)^{-1} \cdot \psi^j(c)),
\]
in $\opkK^\circ_T(Y)[j^{-1}]$ may be checked componentwise in $\hat{K}^\circ_T(Y')$, for each $Y' \to Y$ with $Y'$ smooth; in this context, the formula is that of Theorem~\ref{t.ARR-kock}.

Other natural and well-known properties of Adams operations that hold in the equivariant $K$-theory of smooth varieties carry over immediately, provided that they can be checked component by component in the inverse limit.  For instance, the subspace of $\opk^\circ_T(X)$ on which the Adams operation $\psi^j$ acts via multiplication by $j^n$ is independent of $j$, for any positive integer $n$, since the same is true in $K^\circ_T(X')$ for all smooth $X'$ mapping to $X$ \cite[Corollary~5.4]{Kock98}.

Similarly, when $X$ is a toric variety, the Adams operation $\psi^j$ on $K^\circ_T(X)$ agrees with pullback $\phi_j^*$, for the natural endomorphism $\phi_j \colon X \rightarrow X$ induced by multiplication by $j$ on the cocharacter lattice, whose restriction to the dense torus is given by $t \mapsto t^j$ \cite[Corollary~1]{Morelli93}.  Applying the Kimura exact sequence and equivariant resolution of singularities, it follows that the Adams operations on $\opk^\circ_T(X)$ agree with $\phi_j^*$, as well.
\end{remark}

\section{Localization theorems and Lefschetz-Riemann-Roch}\label{s.LRR}

Consider the categories $\catC = T\text{-}\mathbf{Sch}$ of $T$-schemes and equivariant morphisms, and $\bar{\catC} = \mathbf{Sch}$ of schemes with trivial $T$-action (and all morphisms), considered as a full subcategory of $\catC$.  Taking the fixed point scheme $F(X) = X^T$ defines a functor from $\catC$ to $\bar{\catC}$ preserving proper morphisms and fiber squares \cite[Proposition A.8.10]{cgp}; it is right adjoint to the embedding $\bar\catC \to \catC$.

Let $S\subseteq R(T)$ be the multiplicative set generated by $1-\ee^{-\lambda}$ for all $\lambda\in M$.  By \cite[Th\'eor\`eme~2.1]{thomason-lef}, the homomorphism
\begin{equation}
 S^{-1}\iota_*\colon S^{-1}K^T_\circ(X^T) \to S^{-1}K^T_\circ(X) \label{e.k-loc}
\end{equation}
is an isomorphism for any $T$-scheme $X$.

Similarly, let $\bar{S} \subseteq \Lambda_T = \Sym^*M$ be the multiplicative set generated by all $\lambda\in M$.  By \cite[\S2.3, Corollary~2]{brion-chow}, the homomorphism
\begin{equation}
  \bar{S}^{-1}\iota_*\colon \bar{S}^{-1} A^T_*(X^T) \to \bar{S}^{-1}A^T_*(X) \label{e.a-loc}
\end{equation}
is an isomorphism for any $T$-scheme $X$.

\begin{theorem}\label{t.local}
The fixed point functor $F(X) = X^T$ gives rise to Grothendieck transformations
\begin{align*}
  S^{-1}\opk_T^\circ(X\to Y) &\xrightarrow{\loc^K}  S^{-1} \opk_T^\circ(X^T \to Y^T)  
 \quad \text{and}\\
  \bar{S}^{-1} A_T^*(X \to Y) &\xrightarrow{\loc^A} \bar{S}^{-1} A_T^*(X^T \to Y^T) ,
\end{align*}
inducing isomorphisms of $S^{-1}R(T)$-modules and $\bar{S}^{-1}\Lambda_T$-modules, respectively.

These transformations commute with the equivariant Grothendieck-Verdier- and Adams-Riemann-Roch transformations: the diagrams
\begin{diagram}
  S^{-1}\opk_T^\circ(X\to Y) &\rTo^{\loc^K}&  S^{-1} \opk_T^\circ(X^T \to Y^T) \\
   \dTo^t    &    &  \dTo_t \\
  \bar{S}^{-1} \aA_T^*(X \to Y) &\rTo^{\loc^A} & \bar{S}^{-1} \aA_T^*(X^T \to Y^T)
\end{diagram}
and
\begin{diagram}
  S^{-1}\opk_T^\circ(X\to Y) &\rTo^{\loc^K}&  S^{-1} \opk_T^\circ(X^T \to Y^T) \\
   \dTo^{\psi^j}    &    &  \dTo_{\psi^j} \\
  \bar{S}^{-1} \opkK_T^\circ(X \to Y) &\rTo^{\loc^K} & \bar{S}^{-1} \opkK_T^\circ(X^T \to Y^T)
\end{diagram}
commute.
\end{theorem}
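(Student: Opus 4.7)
The plan is to apply Proposition~\ref{p.transf} directly with $F(X) = X^T$. The left adjoint $L\colon \bar\catC \to \catC$ is the inclusion of schemes with trivial $T$-action as $T$-schemes; since $\bar X^T = \bar X$ canonically, the unit $\bar X \to FL(\bar X)$ is the identity. Thomason's theorem~\eqref{e.k-loc} supplies a natural isomorphism of homology theories $\tau_X := \iota_{X*}^{-1}\colon S^{-1}K^T_\circ(X) \xrightarrow{\sim} S^{-1}K^T_\circ(X^T)$, naturality for equivariant proper maps being automatic from the functoriality of $\iota_*$. Proposition~\ref{p.transf} then yields the Grothendieck transformation $\loc^K$; the transformation $\loc^A$ is constructed identically with Brion's isomorphism~\eqref{e.a-loc} in place of Thomason's. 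To satisfy the orientation-class hypothesis, note that $F$ preserves fiber squares \cite[Proposition A.8.10]{cgp}, sends equivariant regular embeddings to regular embeddings, and sends equivariant flat morphisms to flat morphisms, so the refined pullback classes in $\bar{\mathcal Z}$ are accounted for by $t(\mathcal Z)$.

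To prove these transformations are isomorphisms, I construct explicit inverses. Given $d \in S^{-1}\opk^\circ_T(X^T \to Y^T)$ and an equivariant $g\colon Y' \to Y$, apply $d$ to the induced map $g^T\colon (Y')^T \to Y^T$ to obtain $d_{g^T}\colon S^{-1}K^T_\circ((Y')^T) \to S^{-1}K^T_\circ((X')^T)$, using $F$'s preservation of fiber squares to identify $X^T\times_{Y^T}(Y')^T$ with $(X\times_Y Y')^T$. Conjugating by $\iota_{Y'*}^{-1}$ on the source and $\iota_{X'*}$ on the target produces a candidate operator $\sigma^K(d)_g\colon S^{-1}K^T_\circ(Y') \to S^{-1}K^T_\circ(X')$. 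The bivariant operator axioms for $\sigma^K(d)$ follow from those of $d$ together with the naturality of $\iota_*$ under proper pushforward, flat and regular-embedding pullback, and fiber products. The equalities $\sigma^K \circ \loc^K = \id$ and $\loc^K \circ \sigma^K = \id$ are immediate from the construction; the same procedure, with Brion's theorem replacing Thomason's, yields $\sigma^A$ inverse to $\loc^A$.

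Compatibility with the Grothendieck-Verdier-Riemann-Roch transformation of Theorem~\ref{t.rr} reduces, after unwinding both $\loc^K$ and the Riemann-Roch map as applications of Proposition~\ref{p.transf} to homology-level natural isomorphisms, to the covariance of the equivariant BFM transformation $\tau\colon \hat K^T_\circ(-)_\QQ \to \aA^T_*(-)_\QQ$ for proper pushforward \cite{bfm,eg-rr}; in particular, $\tau$ commutes with $\iota_*^{-1}$ after localization, giving the required square on homology, whence the bivariant square by Proposition~\ref{p.transf}. Compatibility with the Adams transformation of Theorem~\ref{t.arr} is entirely analogous, using that the covariant Adams operation $\psi_j$ commutes with equivariant proper pushforward, which is the content of the lemma established immediately before Theorem~\ref{t.arr}.

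I expect the main technical obstacle to be verifying that the candidate inverses $\sigma^K(d)$ and $\sigma^A(d)$ actually commute with the distinguished refined pullbacks and hence lie in the operational bivariant theories. This reduces to showing that the fixed-point functor $F$ interacts correctly with refined pullback along smooth morphisms and regular embeddings in a way compatible with the localization isomorphisms $\iota_*$; once this is established, via the standard local structure of fixed loci for torus actions, every other step in the proof is a straightforward diagram chase.
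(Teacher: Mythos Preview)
Your overall strategy—apply Proposition~\ref{p.transf} with $F(X)=X^T$ and the Thomason/Brion isomorphisms as $\tau$—matches the paper's, and your handling of the compatibility squares is essentially the paper's argument.  Two points, however, need correction.

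First, your justification of the orientation hypothesis is wrong: the fixed-point functor does \emph{not} send equivariant flat morphisms to flat morphisms, nor equivariant regular embeddings to regular embeddings.  For instance, let $\GG_m$ act on $X=\Spec k[s,t,u]/(st-u^2)$ with weights $(1,-1,0)$; the projection to $\Spec k[u]$ is flat, but $X^T=\Spec k[u]/(u^2)$, and the induced map to $\AA^1$ is a closed embedding of a fat point, certainly not flat.  Fortunately this claim is unnecessary.  What Proposition~\ref{p.transf} requires is that each class $[\bar h]\in\bar{\mathcal Z}$ lie in $\langle t(\mathcal Z)\rangle$, and this holds trivially: a flat map or regular embedding $\bar h$ between schemes with trivial $T$-action is, via the inclusion $L$, already a $T$-equivariant flat map or regular embedding, so $[L(\bar h)]\in\mathcal Z$; since $\bar h^T=\bar h$ and the Thomason isomorphism is the identity on trivial-action schemes, one has $t([L(\bar h)])=[\bar h]$.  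The paper uses exactly this, after first identifying the target with the operational theory built on $\bar\catC$.

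Second, the obstacle you flag is real, and ``standard local structure of fixed loci'' does not dissolve it.  The square formed by $\iota_{X*}$ and refined pullback $h^!$ is not the square you need: base-changing $\iota_X\colon X^T\hookrightarrow X$ along $h$ gives $X^T\times_X X'\hookrightarrow X'$, which in general strictly contains $(X')^T$ (the example above already exhibits this).  One can still push your explicit-inverse argument through by showing that $d$ commutes with $\loc^K([h])$ for every $[h]\in\mathcal Z$—this is the commutativity property of operational bivariant theories—but that is itself a nontrivial input.  The paper does not spell out the isomorphism at all; a cleaner route is to bypass the explicit inverse entirely and use the Kimura sequence~\eqref{e.kimura2} with induction on $\dim Y$.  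One checks that if $Y'\to Y$ is an equivariant birational envelope then $(Y')^T\to Y^T$ is again a birational envelope (any subvariety $V\subset Y^T$ is $T$-invariant, hence the birational image of some $T$-invariant $V'\subset Y'$, and a variety equivariantly birational to one with trivial action has trivial action, so $V'\subset (Y')^T$).  This makes $\loc^K$ compatible with the two Kimura sequences, and the base case $Y$ smooth follows from Poincar\'e isomorphisms (using that $Y^T$ is smooth when $Y$ is) together with Thomason's isomorphism~\eqref{e.k-loc}.
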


\begin{proof}
First, observe that if $X$ and $Y$ have trivial $T$-action, then
\[
  S^{-1}\opk_T^\circ(X \to Y) = S^{-1}R(T) \otimes_\ZZ \opk^\circ(X \to Y)
\]
canonically, by applying Lemma~\ref{l.order} to Kan extension along the inclusion of $(\mathbf{Sch})$ in $(T\text{-}\mathbf{Sch})$ as the subcategory of schemes with trivial action.  Letting $\bar{U}_*$ be the homology theory on $(\mathbf{Sch})$ given by $X \mapsto S^{-1}R(T)\otimes K_\circ(X)$, it follows that $S^{-1}\opk_T^\circ(X \to Y) = \mathrm{op}\bar{U}(X \to Y)$ for schemes with trivial $T$-action.

Since $X^T=F(X)$ has a trivial $T$-action, the target of $\loc^K$ may be identified with $\mathrm{op}\bar{U}(F(X) \to F(Y))$.  Using the inverse of the isomorphism \eqref{e.k-loc} as ``$\tau$'' in the statement of Proposition~\ref{p.transf}, we obtain the desired Grothendieck transformation.  The construction of $\loc^A$ is analogous, using the isomorphism \eqref{e.a-loc}.

Commutativity with the Riemann-Roch transformation follows from commutativity of the diagrams
\begin{diagram}
 S^{-1}K^T_\circ(X^T) & \rTo & S^{-1}K^T_\circ(X) \\
  \dTo  &  & \dTo \\
 S^{-1}\kK^T_\circ(X^T) & \rTo & S^{-1}\kK^T_\circ(X) \\
 \dTo  &   & \dTo \\
 \bar{S}^{-1}\aA^T_*(X^T) & \rTo & \bar{S}^{-1}\aA^T_*(X) ,\\
\end{diagram}
where the top square commutes by functoriality of completion, and the bottom square commutes by functoriality of the Riemann-Roch map (for proper pushforward).  The situation for Adams operations is similar.
\end{proof}

\begin{remark}
In general, the Grothendieck transformations $\loc^K$ and $\loc^A$ are distinct from the pullback maps $\iota^*$ induced by the inclusion $\iota\colon Y^T \to Y$; indeed, the latter is a homomorphism
\[
  \iota^* \colon \opk_T^\circ(X\xrightarrow{f} Y) \to \opk_T^\circ( f^{-1}Y^T \to Y^T ),
\]
but the inclusion $X^T \subseteq f^{-1}Y^T$ may be strict, and the pushforward along this inclusion need not be an isomorphism.  However, for morphisms $f$ such that $X^T = f^{-1}Y^T$, the homomorphism specified by $\loc^K$ agrees with $\iota^*$.  For instance, this holds when $f$ is an embedding.  In particular, taking $f$ to be the identity, the homomorphisms
\[
  S^{-1}\opk_T^\circ(X) \to S^{-1}\opk_T^\circ(X^T)
\]
induced by $\loc^K$ are identified with the pullback $\iota^*$.  The same holds for $\loc^A$.
\end{remark}

\section{Todd classes and equivariant multiplicities}

The formal similarity between Riemann-Roch and localization theorems suggests that the localization analogue of the Todd class should play a central role.  This analogue is the {\it equivariant multiplicity}.

For a proper flat map of $T$-schemes $f\colon X \to Y$ such that the induced map $f^T\colon X^T \to Y^T$ of fixed loci is also flat, we seek a class $\eem(f)\in S^{-1}\opk^\circ_T(X^T)$ fitting into commutative diagrams
\begin{equation}\label{e.sga6}
\begin{diagram}
 S^{-1}\opk_T^\circ(X)   &\rTo^{\sim} & S^{-1}\opk_T^\circ(X^T) \\
 \dTo^{f_!} &   &  \dTo_{f^T_!(\;\; \cdot \eem(f))} \\
 S^{-1}\opk_T^\circ(Y)   & \rTo^{\sim} & S^{-1}\opk_T^\circ(Y^T)
\end{diagram}
\end{equation}
and
\begin{equation}\label{e.verdier}
\begin{diagram}
 S^{-1}K^T_\circ(Y)   &\rTo^{\sim} & S^{-1}K^T_\circ(Y^T) \\
 \dTo^{f^!} &   &  \dTo_{\eem(f)\cdot (f^T)^!} \\
 S^{-1}K^T_\circ(X)   & \rTo^{\sim} & S^{-1}K^T_\circ(X^T).
\end{diagram}
\end{equation}
Or, more generally,
\begin{equation}\label{e.fm}
  \loc^K([f]) = \eem(f)\cdot [f^T]
\end{equation}
as bivariant classes in $S^{-1}\opk^\circ_T(X^T\to Y^T)$.

A unique such class exists when $f^T$ is smooth.  Indeed, product with $[f^T]$ induces a Poincar\'e isomorphism $\cdot[f^T]\colon \opk_T^\circ(X^T) \xrightarrow{\sim} \opk_T^\circ(X^T \to Y^T)$, so it can be inverted.

\begin{definition}
With notation and assumptions as above, when $f^T\colon X^T \to Y^T$ is smooth, the class
\[
  \eem^K(f) = \loc^K([f])\cdot [f^T]^{-1} \quad \text{ in } \quad S^{-1}\opk_T^\circ(X^T)
\]
is called the \define{total equivariant ($K$-theoretic) multiplicity of $f$}.  Restricting $\eem(f)$ to a connected component $P\subseteq X^T$ gives the equivariant multiplicity of $f$ along $P$,
\[
  \eem^K_P(f) \in S^{-1}\opk_T^\circ(P).
\]
The equivariant Chow multiplicities $\eem^A(f) \in \bar{S}^{-1}A_T^*(X^T)$ and $\eem^A_P(f) \in \bar{S}^{-1}A_T^*(P)$ are defined analogously.
\end{definition}

Recasting \eqref{e.sga6} with this definition gives an Atiyah-Bott pushforward formula.

\begin{proposition}
Suppose $f\colon X \to Y$ is proper and flat, and $f^T \colon X^T \to Y^T$ is smooth.  Let $Q\subseteq Y^T$ be a connected component.  For $\alpha\in \opk_T^\circ(X)$, we have
\begin{align}\label{e.abbv}
  (f_!\alpha)_Q &= \sum_{f(P)\subseteq Q} f^T_!(\alpha_P\cdot \eem^K_P(f)),
\end{align}
where $\beta_Q$ denotes restriction of a class $\beta$ to the connected component $Q$, and the sum on the RHS is over all components $P\subseteq X^T$ mapping into $Q$.
\end{proposition}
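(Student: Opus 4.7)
The plan is to deduce the formula by unwinding the commutative diagram \eqref{e.sga6} and decomposing the localized operational groups of the fixed loci into their connected components.

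First, because $X^T$ and $Y^T$ are schemes with trivial $T$-action whose connected components are disjoint clopen subschemes, both groups
\[
S^{-1}\opk_T^\circ(X^T) = \bigoplus_P S^{-1}\opk_T^\circ(P) \qquad \text{and} \qquad S^{-1}\opk_T^\circ(Y^T) = \bigoplus_Q S^{-1}\opk_T^\circ(Q)
\]
split canonically as direct sums over components. The ring structure respects this decomposition (cross-terms supported on disjoint components vanish), and so does the proper pushforward $f^T_!$ (the image $f^T(P)$ of a connected component $P$ is connected, hence lies in a unique component $Q \subseteq Y^T$). By definition of the subscript notation, $\alpha_P$ and $(f_!\alpha)_Q$ are the $P$- and $Q$-summands of $\loc^K(\alpha)$ and $\loc^K(f_!\alpha)$, respectively.

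Next, I would invoke the identity
\[
\loc^K(f_!\alpha) = f^T_!\bigl(\loc^K(\alpha) \cdot \eem^K(f)\bigr)
\]
in $S^{-1}\opk_T^\circ(Y^T)$, which is precisely the commutativity of the left square of \eqref{e.sga6}; it follows from the fact that $\loc^K$ is a Grothendieck transformation (Theorem~\ref{t.local}) combined with the defining bivariant identity $\loc^K([f]) = \eem^K(f) \cdot [f^T]$ for the equivariant multiplicity. Expanding $\loc^K(\alpha) = \sum_P \alpha_P$ and $\eem^K(f) = \sum_P \eem^K_P(f)$ and using that multiplication is component-wise on $X^T$, the right-hand side becomes
\[
f^T_!\Bigl(\sum_P \alpha_P \cdot \eem^K_P(f)\Bigr) = \sum_P f^T_!\bigl(\alpha_P \cdot \eem^K_P(f)\bigr).
\]

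Finally, I would take the $Q$-summand of both sides. Since $f^T_!\bigl(\alpha_P \cdot \eem^K_P(f)\bigr)$ lies in the summand indexed by the unique component of $Y^T$ containing $f^T(P)$, only those $P$ with $f(P) \subseteq Q$ contribute, yielding the stated formula. The only step requiring genuine care is the component-wise compatibility of multiplication and proper pushforward on the fixed loci; this is the ``main obstacle,'' though it is mild, since clopen decompositions pass through all the relevant bivariant operations.
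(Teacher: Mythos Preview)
Your proposal is correct and is precisely the argument the paper has in mind: the text introduces the proposition with ``Recasting \eqref{e.sga6} with this definition gives an Atiyah-Bott pushforward formula'' and gives no further proof, so the intended argument is exactly the unwinding of the commutative square \eqref{e.sga6} together with the componentwise decomposition of $S^{-1}\opk_T^\circ(X^T)$ and $S^{-1}\opk_T^\circ(Y^T)$ that you spell out.
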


In general---when $f^T$ is flat but not smooth---we do not know when a class $\eem(f)$ exists.  However, smoothness of the map on fixed loci is automatic in good situations, e.g., when $X^T$ and $Y^T$ are finite and reduced.

Equivariant multiplicities for the map $X \to \pt$ will be denoted $\eem^K(X)$.  Suppose $X^T$ is finite and \emph{nondegenerate}, meaning that the weights $\lambda_1,\ldots,\lambda_n$ of the $T$-action on the Zariski tangent space $T_pX$ are all nonzero, for $p \in X^T$.  This implies that the scheme-theoretic fixed locus is reduced \cite[Proposition~A.8.10(2)]{cgp}, and hence $f^T\colon X^T \to \pt$ is smooth.

\begin{proposition}\label{p.nondeg-cone}
Suppose $p$ is a nondegenerate fixed point of $X$, and let $C$ be the tangent cone $C_pX \subseteq T_pX$ at $p$.  Then
\[
  \eem^K_p(X) = \frac{[\OO_{C}]}{(1-\ee^{-\lambda_1})\cdots (1-\ee^{-\lambda_n})} \quad \text{ and } \quad  \eem^A_p(X) = \frac{[C]}{\lambda_1 \cdots \lambda_n}
\]
in $S^{-1}R(T)$ and $\bar{S}^{-1}\Lambda_T$, respectively.  In particular, if $p\in X$ is nonsingular, 
\[
  \eem^K_p(X) = \frac{1}{(1-\ee^{-\lambda_1})\cdots (1-\ee^{-\lambda_n})} \quad \text{ and } \quad  \eem^A_p(X) = \frac{1}{\lambda_1 \cdots \lambda_n}.
\]
\end{proposition}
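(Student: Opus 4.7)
The plan is to reduce, via the standard deformation to the tangent cone, to the computation of $\eem^K_0$ for the cone $C = C_pX$ sitting inside its ambient $T$-representation $V = T_pX$, and then to evaluate this latter multiplicity by a direct equivariant Koszul calculation.

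First I would replace $X$ by a $T$-stable affine open neighborhood of $p$, so that $X^T = \{p\}$; this does not change $\eem^K_p(X)$, since the multiplicity depends only on the class of $\OO_X$ in a neighborhood of $p$. Then I would form the $T\times\mathbb{G}_m$-equivariant deformation to the tangent cone $\pi\colon\fX\to\mathbb{A}^1$, flat, with fiber $X$ over each $t\ne 0$ and fiber $C$ over $t=0$.  Nondegeneracy of $p$ forces $(C_pX)^T=\{0\}$, so $\fX^T=\{p\}\times\mathbb{A}^1$, which carries trivial $T$-action.  Under the localization isomorphism \eqref{e.k-loc}, the class $[\OO_\fX]\in S^{-1}K^T_\circ(\fX)$ corresponds to a class $\alpha\in S^{-1}K^T_\circ(\{p\}\times\mathbb{A}^1) = S^{-1}R(T)\otimes K_\circ(\mathbb{A}^1)\cong S^{-1}R(T)$, the last isomorphism by $\mathbb{A}^1$-homotopy.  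Flat base change applied to the fiber squares at $t=1$ and $t=0$ identifies the two restrictions of $\alpha$ as $\eem^K_p(X)$ and $\eem^K_0(C)$ respectively, so the two agree.

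Next I would compute $\eem^K_0(C)$ directly.  Let $j\colon C\hookrightarrow V$, $\iota\colon\{0\}\hookrightarrow C$, and $\iota_V=j\circ\iota$.  Since $V$ is a $T$-representation of weights $\lambda_1,\ldots,\lambda_n$, the $T$-equivariant Koszul resolution of $\OO_0$ on $V$ shows that $\iota_{V,*}\colon R(T)\to R(T)=K^T_\circ(V)$ is multiplication by $\textstyle\prod_i(1-\ee^{-\lambda_i})$.  Applying $j_*$ to the defining identity $\iota_*(\eem^K_0(C))=[\OO_C]$ in $S^{-1}K^T_\circ(C)$ yields
\[
  \eem^K_0(C)\cdot\textstyle\prod_i(1-\ee^{-\lambda_i}) \;=\; j_*[\OO_C] \;=\; [\OO_C]
\]
in $K^T_\circ(V)=R(T)$, giving the claimed formula.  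Combined with the first step, this proves the $K$-theoretic statement.  The Chow version follows by the same argument, with $\textstyle\prod_i\lambda_i$ replacing $\textstyle\prod_i(1-\ee^{-\lambda_i})$ as the equivariant top Chern class that describes $\iota_{V,*}$ on $A_T^*(V)=\Lambda_T$.  The nonsingular specializations are immediate: when $p$ is nonsingular, $C=V$ and hence $[\OO_C]=1$ in $R(T)$ and $[C]=1$ in $\Lambda_T$.

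The main obstacle I expect is the fibrewise-invariance argument in the first step: one must check that localization to the fixed locus commutes with flat specialization in the family $\fX\to\mathbb{A}^1$, so that a single class in $S^{-1}R(T)$ simultaneously recovers $\eem^K_p(X)$ and $\eem^K_0(C)$.  This amounts to a flat base change property for the localization isomorphism, and will require some care because $\fX$ is typically not proper over $\mathbb{A}^1$; the argument can be phrased either by working directly with the Rees algebra $\bigoplus_{n\ge 0}\mathfrak{m}_p^n t^{-n}$ or by using the $\mathbb{G}_m$-equivariance of the total deformation.
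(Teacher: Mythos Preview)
Your proposal is correct and follows essentially the same approach as the paper: both reduce via deformation to the tangent cone and then compute $[\OO_0]$ in $K^T_\circ(T_pX)$ via the equivariant Koszul resolution, yielding $[\OO_0]=\prod_i(1-\ee^{-\lambda_i})$.  The paper's version is terser---it invokes specialization in a single line, applying it directly to the identity $[\OO_X]=\sum_q \eem^K_q(X)\cdot[\OO_q]$ without first restricting to an affine neighborhood---and the compatibility concern you flag (that localization to the fixed locus commutes with flat specialization in the family $\fX\to\mathbb{A}^1$) is precisely what the paper absorbs into the phrase ``under deformation to the tangent cone.''
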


\noindent
The proposition justifies our terminology, because it implies the Chow multiplicity $\eem^A_p(X)$ agrees with the Brion-Rossmann equivariant multiplicity \cite{brion-chow,rossmann}.

\begin{proof}
From \eqref{e.verdier}, equivariant multiplicities have the characterizing property
\[
  [\OO_X] = \sum_{p\in X^T} \eem^K_p(X) \cdot [\OO_p]
\]
and
\[
  [X] = \sum_{p\in X^T} \eem^A_p(X) \cdot [p],
\]
under identifications $S^{-1}K^T_\circ(X) = S^{-1}K^T_\circ(X^T)$ and $\bar{S}^{-1}A^T_*(X) = \bar{S}^{-1}A^T_*(X^T)$.  Under deformation to the tangent cone at $p$, these equalities become
\[
  [\OO_C] = \eem^K_p(X) \cdot [\OO_p]
\]
and
\[
  [C] = \eem^A_p(X)\cdot [p]
\]
in $K^T_\circ(T_pX)=R(T)$ and $A^T_*(T_pX)=\Lambda_T$.  Since $[\OO_p] = (1-\ee^{-\lambda_1})\cdots (1-\ee^{-\lambda_n})$ in $K^T_\circ(T_pX)$ and $[p]=\lambda_1\cdots\lambda_n$ in $A^T_*(T_pX)$, the proposition follows.
\end{proof}

The formula for the $K$-theoretic multiplicity in the proposition gives $\eem^K_p(X)$ as a multi-graded Hilbert series:
\[
  \eem^K_p(X) = \sum_{\lambda\in M} (\dim_k \OO_{C,\lambda} )\cdot \ee^\lambda,
\]
where $\OO_{C,\lambda}$ is the $\lambda$-isotypic component of the rational $T$-module $\OO_C$ (cf.~\cite{rossmann}).

Built into our definition of equivariant multiplicity is another way of computing it, via resolutions.  Suppose $f\colon X \to Y$ is given, with both $X^T$ and $Y^T$ finite and nondegenerate.  Then if $f_*[\OO_X]=[\OO_Y]$, as is the case when $Y$ has rational singularities and $X\to Y$ is a desingularization, we have
\[
  \eem^K_q(Y) = \sum_{p\in (f^{-1}(q))^T} \eem^K_p(X).
\]
This often gives an effective way to compute $\eem_q(Y)$.

A fixed point $p$ is \emph{attractive} if all weights $\lambda_1,\ldots,\lambda_n$ lie in an open half-space.  

\begin{lemma}\label{nonzero.kmult}
If $p\in X^T$ is attractive then $\eem^K_p(X)$ is nonzero in $S^{-1}R(T)$.
\end{lemma}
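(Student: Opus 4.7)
The plan is to use Proposition~\ref{p.nondeg-cone} to express $\eem^K_p(X) = [\OO_C]/\prod_i(1-\ee^{-\lambda_i})$ in $S^{-1}R(T)$, with $C = C_pX$ the tangent cone, and to reduce the problem to showing $[\OO_C] \neq 0$ in $R(T)$ itself. Attractiveness forces every $\lambda_i$ nonzero, so the denominator lies in $S$ and is already a unit after localization. Since $R(T) = \ZZ[M]$ is an integral domain and the generators $1-\ee^{-\lambda}$ of $S$ are nonzero Laurent polynomials for $\lambda \neq 0$, the localization map $R(T) \to S^{-1}R(T)$ is injective. It therefore suffices to show that $[\OO_C]$ is already nonzero in $R(T) = K^T_\circ(T_pX)$, where the class is identified through the canonical isomorphism with $K^T_\circ(\pt)$ (under which $[\OO_{T_pX}] = 1$, and $[\OO_0] = \prod_i(1-\ee^{-\lambda_i})$ via Koszul).

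To detect this nonvanishing I would invoke the attractive hypothesis to select a cocharacter $\chi\colon\GG_m \to T$ with $d_k := \langle\chi,\lambda_k\rangle > 0$ for every $k$, and the induced ring map $\chi^*\colon R(T) \to \ZZ[t,t^{-1}]$ sending $\ee^\lambda$ to $t^{\langle\chi,\lambda\rangle}$. Choose a finite $T$-equivariant graded free resolution $F_\bullet \to \OO_C$ over the polynomial ring $\Sym(T_pX^\vee)$, with $F_i = \bigoplus_j \Sym(T_pX^\vee)(\mu_{ij})$; this exists because $\OO_C$ is a finitely generated graded module over a regular graded ring. Computing both $[\OO_C]$ in $R(T)$ and the $\chi$-graded Hilbert series $H_C(t) = \sum_{d\leq 0} \dim_\kk(\OO_C)_d \, t^d$ from this single resolution yields
\[
\chi^*([\OO_C]) = H_C(t) \cdot \prod_k (1-t^{-d_k}) \quad \text{in } \QQ((t^{-1})).
\]
Positivity of the $d_k$ makes $H_C(t)$ a bona fide element of $\ZZ[[t^{-1}]]$, and the cone structure $(\OO_C)_0 = \kk$ gives $H_C(t) = 1 + O(t^{-1}) \neq 0$. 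Since $\QQ((t^{-1}))$ is a field, the product of these two nonzero factors is nonzero, so $\chi^*([\OO_C]) \neq 0$, and hence $[\OO_C] \neq 0$ in $R(T)$.

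The main subtlety I anticipate is establishing the displayed identity cleanly: it requires matching the pullback identification $K^T_\circ(T_pX) = R(T)$ (under which a shifted free module $\Sym(T_pX^\vee)(\mu)$ has class $\ee^\mu$) with the $\chi$-graded Hilbert series of the same module $t^{\langle\chi,\mu\rangle}/\prod_k(1-t^{-d_k})$, and then assembling the two computations through the same resolution of $\OO_C$. Once that is in hand, the attractive hypothesis plays only two roles: it makes the $\lambda_i$ nonzero so the denominator in $\eem^K_p(X)$ is inverted by $S$, and it makes all $d_k > 0$ so the Hilbert series converges in $\ZZ[[t^{-1}]]$, a domain in which the vanishing argument takes place.
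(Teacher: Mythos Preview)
Your argument is correct. You reduce to showing $[\OO_C]\neq 0$ in $R(T)$, then specialize along an attractive cocharacter $\chi$ so that the $\chi$-graded Hilbert series of $\OO_C$ lies in $\ZZ[[t^{-1}]]$ with constant term $1$; the factorization $\chi^*([\OO_C]) = H_C(t)\cdot\prod_k(1-t^{-d_k})$ in the field $\QQ((t^{-1}))$ then forces $\chi^*([\OO_C])\neq 0$. The only point worth making explicit is why the $\chi$-degree-zero part of $\OO_C$ is exactly $k$: the cotangent space $\mathfrak{m}/\mathfrak{m}^2 \cong (T_pX)^\vee$ has weights $-\lambda_k$, so every graded piece $\mathfrak{m}^d/\mathfrak{m}^{d+1}$ with $d\geq 1$ has strictly negative $\chi$-degree, leaving only the residue field in degree zero.

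The paper's proof is a two-line sketch offering two routes. The first, ``similar to Brion,'' is essentially what you have written out: a direct positivity/Hilbert-series argument, transported from Chow to $K$-theory. The second route is different and shorter in principle: it invokes the already-known Chow result $\eem^A_p(X)\neq 0$ and observes that under the Chern character $R(T)\to\hat\Lambda_{T,\QQ}$ the numerator $[\OO_C]$ and denominator $\prod(1-\ee^{-\lambda_i})$ of $\eem^K_p(X)$ have leading terms $[C]$ and $\prod\lambda_i$, the numerator and denominator of $\eem^A_p(X)$. Your approach is self-contained and does not rely on the Chow statement; the paper's leading-term argument is quicker but presupposes Brion's result as a black box.
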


The proof is similar to \cite[\S4.4]{brion-chow}, which gives the corresponding statement for Chow multiplicities $\eem^A_p(X)$. The $K$-theory version also follows from the Chow version; by Proposition~\ref{p.nondeg-cone}, the numerator and denominator of $\eem^A_p(X)$ are the leading terms of the numerator and denominator of $\eem^K_p(X)$, respectively.

\begin{lemma}\label{inj.pd}
Let $X$ be a complete $T$-scheme such that all fixed points in $X$ are nondegenerate.  If all equivariant multiplicities are non-zero, then the canonical map $\opk_T^\circ(X)\to K_\circ^T(X)$, sending $c\mapsto c(\OO_X)$, is injective.
\end{lemma}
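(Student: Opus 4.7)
The plan is to reduce the injectivity statement to a pointwise check at the fixed locus $X^T$, where the nonvanishing of the equivariant multiplicities does the work, and then to promote the resulting vanishing in $S^{-1}\opk_T^\circ(X)$ to vanishing in $\opk_T^\circ(X)$ via the Kan extension.

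Nondegeneracy of every fixed point forces $X^T$ to be zero-dimensional and reduced, so $X^T$ is a finite disjoint union of reduced points $\{p\}$, and $\opk_T^\circ(X^T) = K^T_\circ(X^T) = \bigoplus_p R(T)$.  For $c \in \opk_T^\circ(X)$, write $c_p := \iota_p^* c \in R(T)$ for its components under $\iota^*$.  Suppose $c([\OO_X]) = 0$ in $K^T_\circ(X)$; the goal is to show $c = 0$ in $\opk_T^\circ(X)$.

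By the defining identity for equivariant multiplicities recalled in the proof of Proposition~\ref{p.nondeg-cone}, $[\OO_X] = \iota_*\bigl(\sum_p \eem^K_p(X)[\OO_p]\bigr)$ in $S^{-1}K^T_\circ(X)$.  Applying $c$ and using the bivariant axiom that operators commute with proper pushforward along $\iota\colon X^T \hookrightarrow X$,
\[
  0 = c([\OO_X]) = \iota_*\bigl(c_\iota\bigl(\sum_p \eem^K_p(X)[\OO_p]\bigr)\bigr) = \iota_*\bigl(\sum_p c_p\,\eem^K_p(X)\,[\OO_p]\bigr)
\]
in $S^{-1}K^T_\circ(X)$.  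Since $\iota_*$ becomes an isomorphism after inverting $S$ by \eqref{e.k-loc}, we obtain $c_p \cdot \eem^K_p(X) = 0$ in $S^{-1}R(T)$ for each $p$.  The ring $S^{-1}R(T)$ is a localization of the integral domain $\ZZ[M]$, and $\eem^K_p(X) \neq 0$ by hypothesis, so each $c_p$ vanishes, i.e., $\iota^* c = 0$.  By the Remark following Theorem~\ref{t.local}, after inverting $S$ the pullback $\iota^*$ is identified with the localization isomorphism $\loc^K$, so $c = 0$ in $S^{-1}\opk_T^\circ(X)$.

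The main obstacle is the final step: promoting this vanishing to $c = 0$ in $\opk_T^\circ(X)$, which amounts to controlling $S$-torsion.  Via the Kan extension presentation $\opk_T^\circ(X) = \varprojlim_{Z \to X} K_T^\circ(Z)$ over smooth $T$-schemes (Proposition~\ref{p.kan} and equation~\eqref{e.opk-limit}), it suffices to show $g^* c = 0$ in $K_T^\circ(Z)$ for every smooth $g\colon Z \to X$.  Contravariance together with the vanishing of every $c_p$ forces $g^* c$ to restrict to zero on each connected component of $Z^T$, because each such component maps into $X^T$; Thomason concentration for the smooth scheme $Z$ then places $g^* c$ in the $S$-torsion of $K_T^\circ(Z)$.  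One can restrict the inverse limit to a cofinal subsystem of smooth $Z$ admitting $T$-invariant cellular decompositions, so that $K_T^\circ(Z)$ is a free $R(T)$-module and hence $S$-torsion-free, forcing $g^* c = 0$ for every $Z$ in the cofinal system and completing the argument.
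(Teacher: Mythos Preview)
Your argument up through $\iota^* c = 0$ in $\opk_T^\circ(X^T)$ is correct and is exactly the intended reduction: the identity $[\OO_X]=\sum_p \eem^K_p(X)\,[\OO_p]$, commutativity of operators with $\iota_*$, and the fact that $S^{-1}R(T)$ is a domain together force each component $c_p\in R(T)$ to vanish once the multiplicities are assumed nonzero.

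The gap is in the final paragraph. The assertion that the smooth $T$-schemes admitting $T$-invariant cellular decompositions form a cofinal subsystem of the index category for the Kan extension is stated without justification, and there is no reason to expect it to hold: a general smooth $T$-variety has no such decomposition, and there is no mechanism for dominating an arbitrary smooth $Z\to X$ by one that does while keeping the transition map injective. More tellingly, your last paragraph never invokes the hypothesis that $X$ is \emph{complete}; the $S$-torsion-control you are attempting would, if it worked, prove the lemma for arbitrary $X$, which is not what is being claimed.

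The paper's proof (following \cite[Theorem~4.1]{go.rm}, and using the $K$-theoretic analogue from \cite{g.loc}, recorded here as Theorem~\ref{loc.thm.cs}) uses completeness in exactly one place, and it finishes the argument immediately: when $X$ is complete, the restriction
\[
  \iota^*\colon \opk_T^\circ(X) \longrightarrow \opk_T^\circ(X^T)
\]
is already injective \emph{before} inverting $S$. Since you have shown $\iota^*c=0$, this gives $c=0$ directly, with no need to pass through the Kan extension or control torsion on auxiliary smooth schemes.
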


The proof is similar to that of \cite[Theorem 4.1]{go.rm}, which gives the analogous result for Chow; we omit the details.  Using Lemma~\ref{nonzero.kmult}, the hypothesis of Lemma~\ref{inj.pd} is satisfied whenever all fixed points are attractive.

\begin{example} \label{examples.attractive.rem}
Lemma~\ref{inj.pd} applies to:
(i) projective nonsingular $T$-varieties with isolated fixed points (by Proposition~\ref{p.nondeg-cone}); 
(ii)
Schubert varieties and complete toric varieties, as they have only attractive fixed points; 
(iii) projective $G\times G$-equivariant embeddings of a
connected reductive group $G$, 
as they have only finitely many $T\times T$-fixed points, all of which are attractive.
\end{example}


\begin{remark}\label{r.todd}
The formal analogy between Riemann-Roch and localization theorems was observed by Baum-Fulton-Quart \cite{bfq}.  In fact, the relationship between Todd classes and equivariant multiplicities can be made more precise, as follows.  Assume $f\colon X \to Y$ is proper and lci, and $f^T\colon X^T \to Y^T$ is smooth.  From Theorem~\ref{t.local} and the Riemann-Roch formulas, we have
\begin{align*}
   t(\eem^K(f))\cdot t([f^T]) &= t(\loc^K([f]))\\
                  & = \loc^A( t([f]) ) \\
                  & = \loc^A( \td(T_f) )\cdot \eem^A(f)\cdot[f^T].
\end{align*}

\noindent
In particular, when $X^T$ is finite and nondegenerate, and $Y=\pt$,
\[
  \td(X)|_p = \frac{\ch(\eem^K_p(X))}{\eem^A_p(X)}.
\]
If $X$ is nonsingular at $p$, with tangent weights $\lambda_1,\ldots,\lambda_n$, this recovers a familiar formula for the Todd class:
\[
  \td(X)|_p = \prod_{i=1}^n\frac{\lambda_i}{1-\ee^{-\lambda_i}}.
\]
An analogous calculation, applied to Adams-Riemann-Roch, produces similar formulas for the localization of equivariant Bott elements.
\end{remark}

\begin{remark}
Suppose $f\colon X \hookrightarrow Y$ and $f^T\colon X^T \hookrightarrow Y^T$ are both regular embeddings.  The {\it excess normal bundle} for the diagram
\begin{diagram}
 X^T & \rInto & Y^T  \\
 \dInto & & \dInto \\
 X & \rInto & Y
\end{diagram}
is $E = (N_{X/Y}|_X)/(N_{X^T/Y^T})$.  In this situation, the class $\eem(f)$ satisfying \eqref{e.fm} is $\lambda_{-1}(E^*)$, where for any (equivariant) vector bundle $V$, the class $\lambda_{-1}(V)$ is defined to be $\sum (-1)^i [\exterior^i V]$.  The analogous class in bivariant Chow theory is $c^T_e(E)$, where $e$ is the rank of $E$.  (This is a restatement of the excess intersection formula.  For Chow groups, it is \cite[Proposition~17.4.1]{fulton-it}.  The proof is similar in K-theory; see, e.g., \cite[Theorem~3.8]{Kock98}.)
\end{remark}

\begin{remark}
The interaction between localization and Grothendieck-Riemann-Roch can be viewed geometrically as follows.  Using coefficients in the ground field, which we denote by $\CC$, we have $\Spec( R(T)\otimes \CC ) = T$ and $\Spec( \Lambda \otimes \CC ) = \mathfrak{t}$.  When $X=\pt$, the equivariant Chern character corresponds to the identification of a formal neighborhood of $0\in\mathfrak{t}$ with one of $1\in T$.

Now suppose $X$ has finitely many nondegenerate fixed points, and finitely many one-dimensional orbits, so it is a {\em $T$-skeletal variety} in the terminology of \cite{g.loc}.  The GKM-type descriptions of $\opk^\circ_T(X)$ (see \cite[Theorem~5.4]{g.loc} shows that $\Spec(\opk_T^\circ(X)_\CC)$ consists of copies of $T$, one for each fixed point, glued together along subtori.  Similarly, $\Spec( A_T^*(X)_\CC )$ is obtained by glueing copies of $\mathfrak{t}$ along subspaces.  There are structure maps $\Spec(\opk_T^\circ(X)_\CC) \to T$ and $\Spec( A_T^*(X)_\CC ) \to \mathfrak{t}$, and the  equivariant Chern character gives an isomorphism between fibers of these maps over formal neighborhoods of $1$ and $0$.  Equivariant multiplicities are rational functions on these spaces, regular away from the gluing loci.

A similar picture for topological $K$-theory and singular cohomology was described by Knutson and Rosu \cite{knutson-rosu}.
\end{remark}

\section{Toric varieties} \label{sec:toric}

Let $N=\Hom(M,\ZZ)$, and let $\Delta$ be a fan in $N_\RR$, i.e., a collection of cones $\sigma$ fitting together along common faces.  This data determines a toric variety $X(\Delta)$, equipped with an action of $T$.  (See, e.g., \cite{fulton-tv} for details on toric varieties.)

We now use operational Riemann-Roch to give examples of projective toric varieties $X$ such that the forgetful map $K_T^\circ(X) \to K^\circ(X)$ is not surjective.  

\begin{proposition}\label{p.not-surj}
Let $X=X(\Delta)$, where $\Delta$ is the fan over the faces of the cube with vertices at $\{ (\pm 1,\pm 1,\pm 1) \}$.  Then $K_T^\circ(X) \to K^\circ(X)$ is not surjective.
\end{proposition}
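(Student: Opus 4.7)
The plan is to combine the Grothendieck-Verdier-Riemann-Roch theorem (Theorem~\ref{t.GRR}) and the localization theorem (Theorem~\ref{t.LRR}) with the combinatorial description of operational $K$-theory of toric varieties from~\cite{ap} to detect an integral obstruction to surjectivity.

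First consider the commutative diagram of canonical forgetful maps
\begin{diagram}
 K_T^\circ(X) & \rTo & K^\circ(X) \\
 \dTo        &      & \dTo \\
 \opk_T^\circ(X) & \rTo & \opk^\circ(X).
\end{diagram}
For any complete toric variety, the bottom arrow is surjective: by~\cite{ap}, $\opk_T^\circ(X)$ is identified with the ring of piecewise exponential functions on $\Delta$, and $\opk^\circ(X)$ is identified with a corresponding quotient by characters of $T$. Thus, if $K_T^\circ(X) \to K^\circ(X)$ were surjective, then the image of $K^\circ(X) \to \opk^\circ(X)$ would be contained in the image of $K_T^\circ(X) \to \opk^\circ(X)$, and it suffices to find a class in the former that is not in the latter.

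By Theorem~\ref{t.LRR}, after inverting the multiplicative set $S$, a class in $K_T^\circ(X)$ is determined by its restrictions to the eight $T$-fixed points (the vertices of the cube), subject to Klyachko-type compatibility conditions along the one-dimensional $T$-orbits; these are especially restrictive at the non-simplicial maximal cones of the cube fan. The plan is to exhibit a vector bundle on $X$ --- produced, for instance, via descent or pushforward along a toric resolution $\pi \colon \tilde X \to X$, with Kimura exactness~\eqref{e.kimura2} used to verify descent of the relevant operational class --- whose class in $\opk^\circ(X)$ lies outside the image of $K_T^\circ(X)$. By Theorem~\ref{t.GRR}, the obstruction can equivalently be detected at the level of Chern characters in $\hat{A}^*(X)_\QQ$, where the image of the equivariant forgetful map is controlled by piecewise polynomial functions on $\Delta$ modulo characters.

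The main obstacle is to construct the witness class explicitly: one must produce a genuine vector bundle on $X$ (rather than merely a coherent sheaf or an operational class) whose fixed-point restrictions violate the equivariant compatibility. I expect this will reduce to a concrete combinatorial calculation in the cube fan, exploiting that the singularity type at a cube vertex obstructs lifting certain rank-two or rank-three classes via Klyachko's filtration conditions, while those classes are still representable by vector bundles coming from a resolution.
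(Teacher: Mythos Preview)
Your proposal has a genuine gap: the ``main obstacle'' you identify---constructing an explicit vector bundle on $X$ whose class in $\opk^\circ(X)$ lies outside the image of $K_T^\circ(X)$---is not carried out, and it is precisely the hard part.  Moreover, the localization theorem does not help here in the way you suggest: classes in $K_T^\circ(X)$ for a singular toric variety are \emph{not} simply tuples of restrictions to fixed points satisfying curve compatibilities (that description applies to $\opk_T^\circ(X)$, not to $K_T^\circ(X)$), so you cannot detect the obstruction by checking Klyachko-type conditions on a candidate bundle.

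The paper avoids constructing any witness at all by using two ingredients you omit.  First, by \cite[Theorem~1.4]{ap}, the right vertical map $K^\circ(X)\to\opk^\circ(X)$ is \emph{surjective} (rationally) for toric varieties, so every class in $\opk^\circ(X)_\QQ$ is automatically in its image; there is no need to build a vector bundle.  Second, non-surjectivity of the composite $K_T^\circ(X)_\QQ \to \opk^\circ(X)_\QQ$ is deduced from the known fact \cite[Example~4.2]{kp} that $A_T^*(X)_\QQ\to A^*(X)_\QQ$ is not surjective for the cube fan, transported via the Riemann--Roch isomorphisms $\opkK_T^\circ(X)_\QQ\cong\aA_T^*(X)_\QQ$ and $\opk^\circ(X)_\QQ\cong A^*(X)_\QQ$ of Theorem~\ref{t.GRR}.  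A one-line diagram chase then finishes the proof.  You came close to this in your penultimate sentence, but did not invoke either \cite{kp} or the surjectivity of $K^\circ\to\opk^\circ$; without these, the argument is incomplete.  Theorem~\ref{t.LRR} plays no role.
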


\begin{proof}
By \cite[Example~4.2]{kp}, the homomorphism $A_T^*(X)_\QQ \to A^*(X)_\QQ$ is not surjective, and therefore neither is the induced homomorphism $\alpha\colon \hat{A}_T^*(X)_\QQ \to A^*(X)_\QQ$.  Consider the diagram
\begin{diagram}
 K_T^\circ(X)_\QQ & \rTo & \opk_T^\circ(X)_\QQ & \rTo & \opkK_T^\circ(X)_\QQ & \rTo^\sim & \aA_T^*(X)_\QQ \\
 \dTo^\gamma   &    &  \dTo    &    &  \dTo   &   & \dTo_\alpha \\
 K^\circ(X)_\QQ  & \rOnto^\beta & \opk^\circ(X)_\QQ & = & \opk^\circ(X)_\QQ & \rTo^\sim & A^*(X)_\QQ.
\end{diagram}
By \cite[Theorem~1.4]{ap}, the homomorphism $\beta$ is surjective.  A diagram chase shows that $\gamma$ cannot be surjective.
\end{proof}

The same statement holds, with the same proof, for the other examples shown in \cite{kp} to have a non-surjective map $A_T^*(X)_\QQ \to A^*(X)_\QQ$.
\begin{question}
Can one find examples where $A_T^*(X)_\QQ \to A^*(X)_\QQ$ is surjective, but $K_T^\circ(X) \to K^\circ(X)$ is not?
\end{question}

Given a basis for $K^T_\circ(X)$, the dual basis for $\opk_T^\circ(X)=\Hom(K^T_\circ(X),R(T))$ can be computed using equivariant multiplicities, which are easy to calculate on a toric variety.  We illustrate this for a weighted projective plane.

\begin{example}
Let $N=\ZZ^2$, with basis $\{e_1,e_2\}$, and with dual basis $\{u_1,u_2\}$ for $M$.  Let $\Delta$ be the fan with rays spanned by $e_1$, $e_2$, and $-e_1-2e_2$; the corresponding toric variety $X=X(\Delta)$ is isomorphic to $\PP(1,1,2)$.  Let $D$ be the toric divisor corresponding to the ray spanned by $-e_1-2e_2$, and $p$ the fixed point corresponding to the maximal cone generated by $e_1$ and $-e_1-2e_2$.

Figure~\ref{f.emP112} shows the equivariant multiplicities for $X$, $D$, and $p$, arranged on the fan to show their restrictions to fixed points.  For the two smooth maximal cones, the multiplicities are computed by Proposition~\ref{p.nondeg-cone}; the singular cone (corresponding to $p$) can be resolved by adding a ray through $-e_2$.

\begin{figure}[h!]
\begin{pspicture}(-65,-50)(70,50)

\psline{->}(0,0)(35,0)
\psline{->}(0,0)(0,35)
\psline{->}(0,0)(-16,-32)

\rput[l](10,17){\small $\displaystyle{\frac{1}{(1-\ee^{u_1})(1-\ee^{u_2})}}$}
\rput[l](6,-16){\small $\displaystyle{\frac{1+\ee^{u_1-u_2}}{(1-\ee^{2u_1-u_2})(1-\ee^{-u_2})}}$}
\rput[r](-8,3){\small $\displaystyle{\frac{1}{(1-\ee^{-u_1})(1-\ee^{-2u_1+u_2})}}$}

\rput(0,-44){$\eem^K(X)$}

\end{pspicture}

\begin{pspicture}(-85,-50)(110,50)

\psline{->}(0,0)(35,0)
\psline{->}(0,0)(0,35)
\psline{->}(0,0)(-16,-32)

\rput[l](10,14){\small $0$}
\rput[l](6,-16){\small $\displaystyle{\frac{1}{1-\ee^{2u_1-u_2}}}$}
\rput[r](-6,3){\small $\displaystyle{\frac{1}{1-\ee^{-2u_1+u_2}}}$}

\rput(0,-44){$\eem^K(D)$}

\end{pspicture}
\begin{pspicture}(-75,-50)(80,50)

\psline{->}(0,0)(35,0)
\psline{->}(0,0)(0,35)
\psline{->}(0,0)(-16,-32)

\rput[l](10,14){\small $0$}
\rput[l](6,-14){\small $1$}
\rput[r](-8,0){\small $0$}

\rput(0,-42){$\eem^K(p)$}

\end{pspicture}

\caption{Equivariant multiplicities for $\PP(1,1,2)$ \label{f.emP112}}
\end{figure}
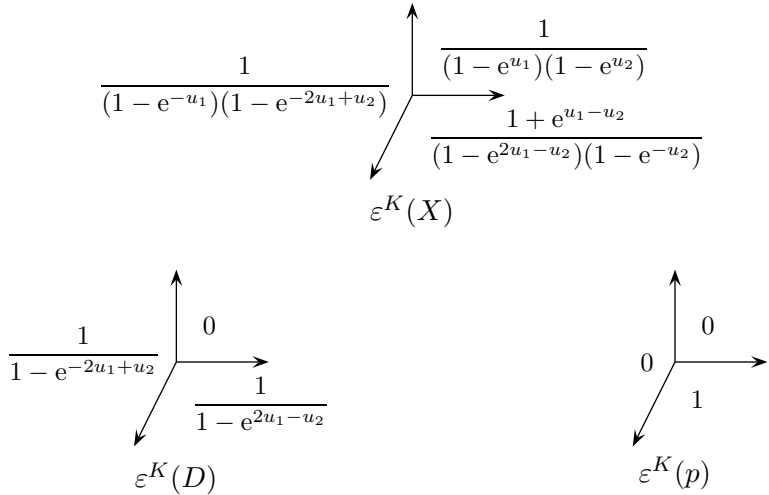

\noindent The classes $[\OO_X]$, $[\OO_D]$, and $[\OO_p]$ form an $R(T)$-linear basis for $K^T_\circ(X)$.  The dual basis for $\opk_T^\circ(X)$ was computed in \cite[Example~1.7]{ap}.  
The canonical map $\opk_T^\circ(X) \to K^T_\circ(X)$, sending $c\mapsto c(\OO_X)$, is then given by
\begin{align*}
[\OO_X]^\vee &\mapsto (1-\ee^{u_1})(1-\ee^{u_2})[\OO_X] + (\ee^{u_1}-\ee^{u_1+u_2})[\OO_D] + \ee^{u_2}[\OO_p] \,;\\
[\OO_D]^\vee &\mapsto (\ee^{u_1}-\ee^{u_1+u_2})[\OO_X] + (\ee^{-u_1+u_2}+\ee^{u_1+u_2}+\ee^{u_2}-\ee^{u_1})[\OO_D]\\
&\qquad  - ( \ee^{u_2}+ \ee^{-u_1+u_2})[\OO_p] \,;\\
[\OO_p]^\vee &\mapsto \ee^{u_2}[\OO_X] - (\ee^{u_2}+\ee^{-u_1+u_2})[\OO_D] + \ee^{-u_1+u_2}[\OO_p] .
\end{align*}
The resulting $3\times 3$ matrix has determinant $\ee^{-u_1+2u_2}+\ee^{u_2}$, which is not a unit in $R(T)$, and the map $\opk_T^\circ(X) \to K^T_\circ(X)$ is injective, but not surjective.
\end{example}

\begin{remark}
When $X$ is an affine toric variety, then it is easy to see $\opk_T^\circ(X) \isom R(T)$ and $A_T^*(X) \isom \Lambda$, for example by using the descriptions of these rings as piecewise exponentials and polynomials, respectively \cite{ap,payne-chow}.  (In fact, this is true more generally when $X$ is a $T$-skeletal variety with a single fixed point, see \cite{g.loc}.)  For non-equivariant groups, Edidin and Richey have recently shown that $\opk^\circ(X) \isom \ZZ$ and $A^*(X) \isom \ZZ$ \cite{edidin-richey,edidin-richey2}.  The relationship between the equivariant and non-equivariant groups is subtle.  On the other hand, one can use our Riemann-Roch theorems (together with the facts that $\opk^\circ(X)$ and $A^*(X)$ are torsion-free) to deduce the Chow statement from the $K$-theory one, or vice-versa.
\end{remark}

\section{Spherical varieties} \label{ss:spherical}

Let $G$ be a connected reductive linear algebraic group with Borel subgroup $B$ and maximal torus $T\subset B$.  A {\em spherical} variety is a $G$-variety with a dense $B$-orbit.  In other sources, spherical varieties are assumed to be normal, but here this condition is not needed and we do not assume it.  If $X$ is a spherical variety, then it has finitely many $B$-orbits, and thus also a finite number of $G$-orbits, each of which is also spherical. Moreover, since every spherical homogeneous space has finitely many $T$-fixed points, it follows that $X^T$ is finite.  Examples of spherical varieties include toric varieties, flag varieties, symmetric spaces, and $G\times G$-equivariant embeddings of $G$.  See \cite[\S5]{ti:sph} for references and further details.

In this section, we describe the equivariant operational $K$-theory
of a possibly singular complete spherical variety
using the following localization theorem.

\begin{theorem}[\cite{g.loc}]\label{loc.thm.cs}
Let $X$ be a $T$-scheme. If the action of $T$ has enough limits (e.g. if $X$ is complete),
then the restriction homomorphism $\opk_T^\circ(X)\to \opk_T^\circ(X^T)$ is injective,
and its image is the intersection of the images of the restriction homomorphisms
$
\opk_T^\circ(X^H)\to \opk_T^\circ(X^T),
$
where $H$ runs over all subtori of codimension one in $T$. \qed
\end{theorem}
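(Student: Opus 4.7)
The plan is to split the statement into an injectivity claim and an image characterization, using the Bia\l ynicki-Birula (BB) stratification that the enough-limits hypothesis provides, together with the localization theorem (Theorem~\ref{t.local}) already in hand.

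\textbf{Injectivity.} First I would choose a generic one-parameter subgroup $\nu\colon \GG_m \to T$ whose fixed locus equals $X^T$ and for which every $x \in X$ admits a limit under $\nu$; the enough-limits hypothesis is exactly what guarantees this. This produces a decomposition $X = \bigsqcup_\alpha W_\alpha$ into locally closed attracting cells, indexed by the connected components $F_\alpha$ of $X^T$, and each $W_\alpha$ carries a natural limit morphism $W_\alpha \to F_\alpha$. Order the cells so that $X_{\le i} := \bigcup_{j \le i} W_j$ is closed. Now Theorem~\ref{t.local} asserts that $S^{-1}\opk_T^\circ(X) \to S^{-1}\opk_T^\circ(X^T)$ is an isomorphism, so to get injectivity of the uncompleted restriction map it suffices to show that the localization map $\opk_T^\circ(X) \to S^{-1}\opk_T^\circ(X)$ is injective, i.e.\ that $\opk_T^\circ(X)$ is $S$-torsion free. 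I would prove this by induction along the BB filtration, exhibiting $\opk_T^\circ(X)$ as a subobject of $\prod_\alpha \opk_T^\circ(F_\alpha)$ via the limit maps; since $\opk_T^\circ(F_\alpha)$ is an $R(T)$-algebra built out of schemes with trivial $T$-action, it is $S$-torsion free.

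\textbf{Image characterization.} For the image, I would argue by induction on the BB filtration. The guiding observation is that any one-dimensional $T$-orbit in $X$ has stabilizer a codimension-one subtorus $H$, and together with its limit points this orbit lies in $X^H$. Thus the family of restrictions $\opk_T^\circ(X^H) \to \opk_T^\circ(X^T)$ encodes precisely the obstructions to gluing local data across adjacent cells of the BB stratification. Given a compatible class $c \in \opk_T^\circ(X^T)$ lying in every $\mathrm{Im}(\opk_T^\circ(X^H) \to \opk_T^\circ(X^T))$, I would inductively construct a lift to $\opk_T^\circ(X_{\le i})$: having lifted to $X_{\le i-1}$, the obstruction to extending across $W_i$ is controlled by compatibility over those $X^H$ that meet the closure of a one-dimensional orbit running from $F_i$ into $X_{\le i-1}$, and by hypothesis this obstruction vanishes.

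\textbf{Main obstacle.} The image characterization is the harder half. Operational bivariant theories do not carry a straightforward long exact sequence for an open-closed decomposition, so the inductive lifting step across $W_i \subset X_{\le i}$ cannot be reduced to a routine cokernel computation. My plan for handling this is to combine the Kan extension description (Proposition~\ref{p.kan}) with equivariant resolution of singularities: the lifting question can be rephrased as a question about compatible operators on all smooth $T$-schemes mapping to $X_{\le i}$, and for such smooth targets a GKM-type analysis relative to the BB cells should identify the obstruction with a condition that is visible on the various $X^H$. The care required lies in checking that the Kimura sequences \eqref{e.kimura1} and \eqref{e.kimura2}, which govern how $\opk_T^\circ$ behaves under resolutions, are compatible with restriction to codimension-one fixed loci, so that the GKM compatibility on a smooth resolution descends to the required compatibility on $X$.
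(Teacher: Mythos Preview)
The paper does not actually prove this theorem: it is quoted from \cite{g.loc} with a terminal \texttt{\textbackslash qed} and no argument, so there is nothing in the present paper to compare your proposal against. Any genuine comparison would have to be made with the proof in \cite{g.loc}, where the result is established via the Chang--Skjelbred machinery for operational $K$-theory.

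That said, your proposal has real gaps even as a standalone argument. For injectivity, you claim to exhibit $\opk_T^\circ(X)$ as a subobject of $\prod_\alpha \opk_T^\circ(F_\alpha)$ using the limit maps $W_\alpha \to F_\alpha$, but those maps go the wrong way for this: they induce pullbacks $\opk_T^\circ(F_\alpha) \to \opk_T^\circ(W_\alpha)$, and there is no evident restriction $\opk_T^\circ(X) \to \opk_T^\circ(W_\alpha)$ since $W_\alpha$ is only locally closed. You yourself note later that operational theories lack a usable open--closed long exact sequence, yet your injectivity step implicitly relies on exactly such a d\'evissage along the BB filtration. For the image characterization, the proposed workaround via Kan extension and resolution is only a hope, not a mechanism: you would need to explain precisely how a GKM-type description on a smooth envelope descends through the Kimura sequences, and why the resulting conditions are exactly those imposed by the codimension-one subtori $H$ rather than something strictly stronger.
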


When $X$ is singular, the fixed locus $X^H$ may be complicated: its irreducible components $Y_i$ may be singular, and they may intersect along subvarieties of positive dimension.  In this context, the restriction map $\opk_T^\circ(X^H) \to \bigoplus_{i}\opk_T^\circ(Y_i)$ is typically not an isomorphism.  The following lemma gives a method for overcoming this difficulty; it is proved in \cite[Remark~3.10]{g.loc}.

\begin{lemma}\label{l.irr}
Let $Y$ be a complete $T$-scheme with finitely many fixed points, let $Y_1,\ldots,Y_n$ be its irreducible components, and write $Y_{ij} = Y_i \cap Y_j$.  We identify elements of $\opk_T^\circ(Y^T)$ with functions $Y^T \to R(T)$, written $f\mapsto f_x$ (and similarly for $Y_i^T$).  In the diagram
\begin{diagram}
 \opk_T^\circ(Y) & \rTo & \bigoplus_i \opk_T^\circ(Y_i)  \\
\dTo_{\iota_Y^*} & & \dTo_{\oplus \iota_{Y_i}^*} \\
\opk_T^\circ(Y^T) & \rTo^{p}& \bigoplus_i \opk_T^\circ(Y^T_i) ,
\end{diagram}
all arrows are injective, and we have
\[
{\rm Im}(p\circ \iota^*_Y) = {\rm Im}(\oplus \iota_{Y_i}^*) \cap \{ (f^{(i)})_{i=1}^n \,|\, f^{(i)}_x=f^{(j)}_x \text{ for all } x\in Y_{ij}^T \}. 
\]
\end{lemma}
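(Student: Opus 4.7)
The plan is to first establish injectivity of all four arrows, and then prove the image description.

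For injectivity, Theorem~\ref{loc.thm.cs} applied to the complete $T$-schemes $Y$ and each $Y_i$ immediately gives injectivity of $\iota_Y^*$ and of each $\iota_{Y_i}^*$. The bottom map $p$ is injective because $Y^T = \bigcup_i Y_i^T$ as sets, so any function on $Y^T$ is determined by its restrictions to each $Y_i^T$. Injectivity of the top horizontal map then follows from a diagram chase: if a class $c \in \opk_T^\circ(Y)$ maps to $0$ in $\bigoplus_i \opk_T^\circ(Y_i)$, then it maps to $0$ in $\bigoplus_i \opk_T^\circ(Y_i^T)$, hence to $0$ in $\opk_T^\circ(Y^T)$ by injectivity of $p$, and therefore $c = 0$ by injectivity of $\iota_Y^*$.

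The central tool for the image description will be the Kimura sequence \eqref{e.kimura1} applied to the map $\pi \colon \bigsqcup_i Y_i \to Y$ from the disjoint union of irreducible components. This $\pi$ is finite, hence proper, and every $T$-invariant subvariety of $Y$ is irreducible and thus contained in some unique component $Y_i$, so it is the birational image of its canonical copy in $\bigsqcup_i Y_i$; consequently $\pi$ is an equivariant envelope. Its fiber product is $\bigsqcup_{i,j} Y_{ij}$, so \eqref{e.kimura1} takes the form
\[
0 \to \opk_T^\circ(Y) \to \bigoplus_i \opk_T^\circ(Y_i) \to \bigoplus_{i,j} \opk_T^\circ(Y_{ij}),
\]
with the last arrow the difference of the two restrictions. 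This identifies $\opk_T^\circ(Y)$ with the subgroup of tuples $(c_i)$ satisfying $c_i|_{Y_{ij}} = c_j|_{Y_{ij}}$ for all $i,j$.

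With this identification in hand, the reverse inclusion $\supseteq$ of the image description proceeds as follows. Given a compatible tuple $(f^{(i)})$ in $\mathrm{Im}(\oplus \iota_{Y_i}^*)$, choose lifts $c_i \in \opk_T^\circ(Y_i)$ with $\iota_{Y_i}^*(c_i) = f^{(i)}$. To glue via the Kimura identification I must verify $c_i|_{Y_{ij}} = c_j|_{Y_{ij}}$ in $\opk_T^\circ(Y_{ij})$. Since $Y_{ij}$ is closed in the complete $Y$, it is itself complete, so Theorem~\ref{loc.thm.cs} applies and the restriction $\opk_T^\circ(Y_{ij}) \to \opk_T^\circ(Y_{ij}^T)$ is injective. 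Thus agreement need only be checked pointwise on $Y_{ij}^T$, which is exactly the hypothesis $f^{(i)}_x = f^{(j)}_x$ combined with the functoriality of restriction. The inclusion $\subseteq$ is immediate, since for $c \in \opk_T^\circ(Y)$ the two restrictions of $c|_{Y_i}$ and $c|_{Y_j}$ to $Y_{ij}$ both compute $c|_{Y_{ij}}$. The main obstacle will be verifying that $\pi$ genuinely satisfies the equivariant envelope axiom; once this is in place, the argument reduces cleanly to pointwise compatibility at fixed points of the intersections, which is supplied by localization.
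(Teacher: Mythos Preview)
Your argument is correct. The paper does not give its own proof of this lemma; it simply cites \cite[Remark~3.10]{g.loc}. Your approach---applying the first Kimura sequence \eqref{e.kimura1} to the equivariant envelope $\bigsqcup_i Y_i \to Y$, and then reducing the gluing condition on each $Y_{ij}$ to a condition on $Y_{ij}^T$ via the injectivity statement in Theorem~\ref{loc.thm.cs}---is exactly the natural one, and all the verifications you outline go through. The envelope check is routine (an irreducible $T$-invariant subvariety lies in some component, and its copy there maps isomorphically), so your closing sentence somewhat overstates the difficulty; once that is noted, the proof is essentially complete as written.
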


Applying Lemma~\ref{l.irr} to $Y=X^H$, we can identify the image of $\opk^\circ_T(X^H)$ in $\opk^\circ_T(X^T)$ by computing $\opk^\circ_T(Y_i)$ separately for each irreducible component $Y_i$, and identifying the conditions imposed on the restrictions to the finitely many $T$-fixed points.

\smallskip

For the rest of this section, $X$ is a complete spherical $G$-variety, and $H\subset T$ is a subtorus of codimension one.  Our goal is to compute $\opk_T^\circ(X^H)$, and we begin by studying the possibilities for the irreducible components of $X^H$.

A subtorus $H\subset T$ is {\em regular} if its centralizer $C_G(H)$ is equal to $T$.  In this case, $\dim(X^H)\leq 1$.  Let $Y$ be an irreducible component of $X^H$, so the torus $T$ acts on $Y$.  If $Y$ is a single point, or a curve with unique $T$-fixed point, then $\opk_T^\circ(Y)\isom R(T)$.  Otherwise, $T$ acts on the curve $Y$ via a character $\chi$, fixing two points, so $Y^T = \{x,y\}$, and we have
\[
\opk_T^\circ(Y) \isom \{ (f_x,f_y) \,|\, f_x - f_y \equiv 0 \mod (1-\ee^{-\chi}) \} \subseteq R(T)^{\oplus 2}.
\]
One can see this from the integration formula: we must have $\eem_x\cdot f_x + \eem_y \cdot f_y \in R(T)$, and clearing denominators in the requirement
\begin{align}\label{e.divisible}
 \frac{f_x}{1-\ee^{-\chi}} + \frac{f_y}{1-\ee^{\chi}} \in R(T)
\end{align}
leads to the asserted divisibility condition.  
(See \cite[Proposition 5.2]{g.loc}.)  This settles the case of regular subtori.

If the codimension-one subtorus $H$ is not regular, then it is {\em singular}.  A subtorus of codimension one is singular if and only if it is the identity component of the kernel of some positive root.  In this case, $C_G(H) \subseteq G$ is generated by $H$ together with a subgroup isomorphic to $SL_2$ or $PGL_2$.  
In particular, there is a nontrivial homomorphism $SL_2 \to C_G(H) \subseteq G$.  
By \cite[Proposition 7.1]{brion-chow}, each irreducible component of $X^{H}$ is spherical with respect to this $SL_2$ action, and $\dim(X^{H}) \leq 2$.

Analyzing the case of a singular codimension-one subtorus $H$ will take up most of the rest of this section.  We set the following notation.

\begin{notation}\label{n.fixed}
Let $H\subset T$ be a singular subtorus of codimension one, and let $\phi\colon G'=SL_2 \to C_G(H) \subseteq G$ be the corresponding homomorphism.  Let $B'=\phi^{-1}B \subset G'$, a Borel subgroup which may be identified with upper-triangular matrices in $SL_2$.

Let $D' = \phi^{-1}T \subset G'$, maximal torus which may be identified with diagonal matrices in $SL_2$.  We further identify $D'$ with $\mathbb{G}_m$ via $\zeta\mapsto {\rm diag}(\zeta^{-1},\zeta)$.  Let $D=\phi(D') \subseteq T$, a one-dimensional subgroup such that $T\isom D \times H$.

Finally, let $Y$ be an irreducible component of $X^H$, and let $\tilde{Y}$ be its normalization.  We consider both $Y$ and $\tilde{Y}$ as spherical $G'$-varieties via $\phi\colon G' \to G$. 
\end{notation}

To describe the geometry of the varieties $Y$ and $\tilde{Y}$, we use the classification of normal complete spherical varieties from \cite{a-sl2} (see also \cite[Example 2.17]{ab}).  By \cite{a-sl2}, the normal $G'$-variety $\tilde Y$ is equivariantly isomorphic to one of the following:

\medskip

\begin{enumerate}[itemsep=5pt]
\item A single point.

\item A projective line $\mathbb{P}^1=G'/B'$.

\item A projective plane $\mathbb{P}(V)$, on which $G'=SL_2$ acts by the projectivization of its linear action on $V=\Sym^2\CC^2$ (quadratic forms in two variables) with two orbits, the conic of degenerate forms and its complement, which is isomorphic to $G'/N_{G'}(D')$.

\item A product of two projective lines $\mathbb{P}^1\times \mathbb{P}^1$, on which $G'$ acts diagonally with two orbits, the diagonal and its complement, which is a dense orbit isomorphic to $G'/D'$.

\item A Hirzebruch surface $\mathbb{F}_n=\mathbb{P}(\mathcal{O}_{\mathbb{P}^1}\oplus \mathcal{O}_{\mathbb{P}^1}(n))$, $n\geq 1$,
on which $G'$ acts via its natural actions on $\mathbb{P}^1$ and the
linearized sheaf $\mathcal{O}_{\mathbb{P}^1}(n)$, with three orbits.  The dense orbit has isotropy group $U_n$, the semidirect product of a one-dimensional unipotent subgroup $U\subset B'$ with the subgroup of $n$-th roots of unity in $D'$, and the complement of this orbit consists of two closed orbits $C_+$ and $C_-$, which are sections of the fibration $\mathbb{F}_n\to \mathbb{P}^1$ with self-intersection $n$ and $-n$, respectively.

\item \label{Pn} A normal projective surface $P_n$ obtained from $\mathbb{F}_n$ by contracting the negative section $C_-$. In this case, $\tilde Y$  has three $G'$-orbits: the dense orbit with isotropy group $U_n$, the image of the positive section $C_+$, and a fixed point (the image of the contracted curve $C_-$).  For $n=1$, this case includes $P_1 \isom \PP^2$, a compactification of $SL_2$ acting on $\AA^2$ by the standard representation.  
\end{enumerate}

\medskip

Our first goal is to reduce to the case where $Y$ is normal, so that we can use the above classification.

\begin{lemma}\label{sl2.envelope}
Every $G'$-orbit in $Y$ is the isomorphic image of a $G'$-orbit in $\tilde{Y}$. In particular, the normalization $\pi\colon \tilde{Y}\to Y$ is a $G'$-equivariant envelope.
\end{lemma}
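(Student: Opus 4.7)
The plan is to lift the $G'$-action to the normalization $\tilde Y$ and then use the classification (1)--(6) of complete normal $SL_2$-spherical varieties to control how $\pi$ behaves on orbits.

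First, I would observe that the $G'$-action on $Y$ lifts uniquely to $\tilde Y$. Since $G'$ is smooth and connected, the product $G' \times \tilde Y$ is normal, so the composition $G' \times \tilde Y \to G' \times Y \to Y$ factors through $\pi$ by the universal property of normalization. This makes $\pi\colon \tilde Y \to Y$ a finite (hence proper) $G'$-equivariant surjection; in particular, $G'$-orbits of $\tilde Y$ map onto $G'$-orbits of $Y$, and $Y$ inherits from $\tilde Y$ the property of having only finitely many $G'$-orbits.

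Next, for each $G'$-orbit $O\subseteq Y$, I would pick any orbit $\tilde O\subseteq \pi^{-1}(O)$ and verify that the restriction $\pi|_{\tilde O}\colon \tilde O \to O$ is an isomorphism. Writing $\tilde O \cong G'/K$ and $O \cong G'/H$, the $G'$-equivariance and finiteness of $\pi$ force $K \subseteq H$ with $[H:K] < \infty$. By the classification every $G'$-orbit of $\tilde Y$ has dimension at most $2$, one-dimensional orbits are always isomorphic to $G'/B' \cong \mathbb{P}^1$, and two-dimensional orbits only occur as the dense orbit. The case analysis is then short: in dimension zero the statement is trivial; in dimension one, $\tilde O \cong G'/B'$ gives $B' \subseteq H$, and since the only connected subgroups of $SL_2$ containing $B'$ are $B'$ itself and $SL_2$, we conclude $H = B'$; in dimension two, $\tilde O$ and $O$ are the dense orbits of $\tilde Y$ and $Y$ respectively, so $\pi|_{\tilde O}$ is the restriction of the birational map $\pi$ to a dense open subset, forcing $[H:K] = 1$.

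The envelope statement then follows immediately: $\pi$ is proper, and every $G'$-invariant irreducible closed subvariety $Z\subseteq Y$ is the closure $\overline{O}$ of a unique $G'$-orbit, hence equals $\pi(\overline{\tilde O})$ with $\pi$ birational on $\overline{\tilde O}$ by the previous paragraph. The main obstacle I anticipate is the one-dimensional case, where one needs to rule out nontrivial $G'$-equivariant finite self-covers of $\mathbb{P}^1$; the classification pins down $\tilde O \cong G'/B'$, and then the subgroup structure of $SL_2$ closes the argument cleanly.
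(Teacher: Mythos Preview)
Your argument is correct, but it takes a different route from the paper's. You work ``from above'': you invoke the classification of normal $SL_2$-spherical surfaces to pin down the stabilizers of orbits in $\tilde Y$, and then use the parabolic-subgroup structure of $SL_2$ to force $K=H$ in each dimension. The paper instead works ``from below'': it observes directly that any non-open orbit $\mathcal{O}=G'\cdot x$ in $Y$ has connected stabilizer $G'_x$ (either $B'$ or $G'$, since $\dim Y\le 2$), and a connected group acting on the finite set $\pi^{-1}(x)$ must act trivially; hence every $y\in\pi^{-1}(x)$ has $G'_y=G'_x$ and $G'\cdot y\to\mathcal{O}$ is an isomorphism. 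The paper's argument is shorter and does not need the full classification list, only the elementary fact that one-dimensional $SL_2$-homogeneous spaces are $G'/B'$. Your approach buys nothing extra here, but it is perfectly valid. One small wording issue: in your one-dimensional case you say ``the only \emph{connected} subgroups of $SL_2$ containing $B'$ are $B'$ and $SL_2$'', but what you actually need is the statement for arbitrary closed subgroups; this is still true (closed subgroups containing a Borel are parabolic, and $B'$ is self-normalizing), so the conclusion stands.
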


\begin{proof}
 Let $\mathcal{O}=G'\cdot x$ be an orbit in $Y$.  If $\mathcal{O}$ is open, then $\pi^{-1}(\mathcal{O})$ maps isomorphically to $\mathcal{O}$.  Suppose $\mathcal{O}$ is not open.  Then either $\mathcal{O}\simeq G'/B'$ or $\mathcal{O}$ is a $G'$-fixed point. In either case, the isotropy group $G'_x$ is connected, and hence acts trivially on $\pi^{-1}(x)$. Then, for any $y \in \pi^{-1}(x)$, $G'\cdot y$ maps isomorphically to $G'\cdot x$.
\end{proof}

\begin{corollary}\label{sl2.bijective.env}
The normalization $\pi \colon \widetilde Y \to Y$
is bijective unless $Y$ is a surface with a double curve obtained by identifying $C_+$ and $C_-$ in $\mathbb{F}_n$.
\end{corollary}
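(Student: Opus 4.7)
The strategy is to combine Lemma \ref{sl2.envelope} with the Akhiezer-type classification of normal complete spherical $G'$-varieties recalled just above (the six cases), and perform a case analysis on the type of $\tilde Y$. First I would observe that Lemma \ref{sl2.envelope} implies $\pi$ is injective on each $G'$-orbit of $\tilde Y$, since each such orbit maps isomorphically onto an orbit of $Y$. Combined with surjectivity of $\pi$, it follows that $\pi$ fails to be bijective if and only if two distinct $G'$-orbits of $\tilde Y$ have the same image in $Y$. Such orbits must then be $G'$-equivariantly isomorphic (each maps isomorphically onto the common target), and in particular must have the same dimension.

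Next I would run through the six possibilities for $\tilde Y$. In cases (1) and (2), $\tilde Y$ has a single $G'$-orbit, so there is nothing to check. In cases (3) and (4), the orbits have dimensions $2$ and $1$, and in case (6) they have dimensions $2$, $1$, and $0$; in each of these the orbits have pairwise distinct dimensions, so no identification is possible and $\pi$ must be bijective. The only interesting case is (5), where $\tilde Y = \mathbb{F}_n$ has a $2$-dimensional dense orbit together with two $1$-dimensional closed orbits $C_+$ and $C_-$, each $G'$-equivariantly isomorphic to $G'/B' \cong \PP^1$. Because $\pi$ is birational it restricts to an isomorphism on the dense orbit, so the only way $\pi$ can fail to be bijective is that $\pi(C_+) = \pi(C_-) =: D$. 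In that case $\pi|_{C_+}$ and $\pi|_{C_-}$ are both $G'$-equivariant isomorphisms onto $D$, and the composite $\pi|_{C_-}^{-1}\circ \pi|_{C_+}\colon C_+ \to C_-$ is a $G'$-equivariant isomorphism. Hence $Y$ is the pushout of $\mathbb{F}_n$ obtained by gluing $C_+$ and $C_-$ along this identification, giving precisely the surface with a double curve described in the statement.

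The main obstacle is merely bookkeeping in the case analysis: one must confirm the orbit dimensions in each item of the classification rule out coincidences outside case (5), and then verify that the identification in case (5) really takes the stated form. Both points are forced: the first by the dimension count, and the second by the fact that $C_+$ and $C_-$ are homogeneous $\PP^1$'s, so any $G'$-equivariant surjection between them is an isomorphism.
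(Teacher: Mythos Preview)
Your proposal is correct and follows essentially the same route as the paper: both arguments use Lemma~\ref{sl2.envelope} together with the six-case classification of $\tilde Y$ to conclude that the only possible failure of bijectivity is the identification of $C_+$ and $C_-$ in $\mathbb{F}_n$. The paper organizes the case analysis from the $Y$ side (examining fibers $\pi^{-1}(y)$ according to whether the stabilizer $G'_y$ is trivial, $B'$, or $G'$), while you organize it from the $\tilde Y$ side (comparing orbit dimensions), but the content is the same. One small point: your claim that every $G'$-orbit of $\tilde Y$ maps isomorphically onto its image is not quite the literal \emph{statement} of Lemma~\ref{sl2.envelope}, but it follows immediately from its \emph{proof} (connectedness of the stabilizers $G'_y$ forces trivial action on fibers).
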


\noindent Such surfaces are complete and algebraic, but not projective.  See, e.g., \cite{kodaira}.  In particular, if $X$ is projective then $\pi$ is bijective for all $H$ and all $Y$.

\begin{proof}
By Lemma \ref{sl2.envelope}, every $G'$-orbit in $Y$ is the isomorphic image of an orbit in $\tilde{Y}$.  
Hence $Y$ has at most three $G'$-orbits.  
Let $y\in Y$.  If $y$ is in the open orbit, then $|\pi^{-1}(y)|=1$.  Otherwise, $y$ is in a closed orbit, and its stabilizer is either $G'$ or $B'$.  If $y$ is a $G'$-fixed point, then each point in $\pi^{-1}(y)$ is fixed. Since $\tilde{Y}$ has at most one $G'$-fixed point, we conclude that $|\pi^{-1}(y)|=1$.  Otherwise, the orbit of each $z\in \pi^{-1}(y)$ is a $G'$-curve in $\tilde{Y}$ mapping isomorphically to $\mathcal{O}_y$.

Consequently, $\pi$ is a bijection unless it identifies two $G'$-stable curves in $\tilde Y$.  From the classification above, we see that the only way this can happen is if $\tilde Y \cong \mathbb{F}_n$ and $\pi$ identifies the curves $C_+$ and $C_-$. 
It is worth noting that this gluing, being $G'$-equivariant, is uniquely determined. 
Indeed, to glue $C_+$ and $C_-$ so that the quotient inherits a $G'$-action, 
we should use a $G'$-equivariant isomorphism $C_+\to C_-$. 
The Borel subgroup $B'$ also acts on both curves, with unique fixed points $p_+\in C_+$ and $p_-\in C_-$. Thus 
an equivariant isomorphism must send $p_+$ to $p_-$. 
Since $C_+$ and $C_-$ are homogeneous for $G'$, this determines the map.   
\end{proof}

The previous corollary together with the Kimura sequence (eq.~\eqref{e.kimura2} of \S\ref{ss.chow-K}) implies the following: 

\begin{corollary}\label{c.normalization}
The normalization map $\pi\colon \tilde{Y} \to Y$ induces an isomorphism
$
\opk_{D'}^\circ(Y)\xrightarrow{\sim} \opk_{D'}^\circ(\tilde{Y}),
$
unless $Y$ is a surface with a double curve obtained by identifying $C_+$ and $C_-$ in $\mathbb{F}_n$. \qed  
\end{corollary}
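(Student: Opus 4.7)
The plan is to apply the Kimura sequence \eqref{e.kimura2} to $\pi \colon \tilde Y \to Y$. By Lemma~\ref{sl2.envelope}, $\pi$ is a $G'$-equivariant envelope, and it is birational by construction. Taking $X = Y$ and $f = \mathrm{id}_Y$, let $B \subseteq Y$ be the (reduced) non-normal locus, so that $\pi$ restricts to an isomorphism $\tilde Y \setminus \pi^{-1}(B) \xrightarrow{\sim} Y \setminus B$, and set $E = \pi^{-1}(B)$. The sequence reads
\[
0 \to \opk_{D'}^\circ(Y) \to \opk_{D'}^\circ(\tilde Y) \oplus \opk_{D'}^\circ(B) \to \opk_{D'}^\circ(E).
\]
Given that the pullback map $\opk_{D'}^\circ(B) \to \opk_{D'}^\circ(E)$ is an isomorphism, an elementary diagram chase on this four-term exact sequence yields the desired isomorphism $\opk_{D'}^\circ(Y) \xrightarrow{\sim} \opk_{D'}^\circ(\tilde Y)$.

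To establish that $\opk_{D'}^\circ(B) \to \opk_{D'}^\circ(E)$ is an isomorphism, I would first observe that operational $K$-theory is insensitive to nilpotents: by the Kan extension characterization (Proposition~\ref{p.kan}), any smooth $D'$-equivariant morphism $X' \to B$ factors uniquely through $B_{\mathrm{red}}$ since $X'$ is reduced, so $\opk_{D'}^\circ(B) = \opk_{D'}^\circ(B_{\mathrm{red}})$, and similarly for $E$. By Corollary~\ref{sl2.bijective.env}, $\pi$ is bijective under the hypothesis, so $E_{\mathrm{red}} \to B_{\mathrm{red}}$ is a bijective, finite, $D'$-equivariant morphism of reduced schemes. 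When $\dim B = 0$, both $B_{\mathrm{red}}$ and $E_{\mathrm{red}}$ are finite disjoint unions of reduced $T$-fixed points in $D'$-equivariant bijection, so the map on operational $K$-theory is manifestly an isomorphism.

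The main obstacle is the positive-dimensional case, which arises when $Y$ is a surface with a one-dimensional non-normal locus. My plan is to iterate: apply \eqref{e.kimura2} to the normalization of $B_{\mathrm{red}}$ and the induced map from $E_{\mathrm{red}}$, reducing to the zero-dimensional situation after at most one further step since $\dim Y \le 2$. The delicate point is verifying that the bijectivity of $\pi$ propagates down to the non-normal loci of $B_{\mathrm{red}}$ so that each stage of the recursion satisfies the bijectivity hypothesis; this should follow by inspecting the classification of $\tilde Y$ in cases~(3)--(6) and using that the surface excluded by Corollary~\ref{sl2.bijective.env} is precisely the one in which two distinct $G'$-stable curves get glued, i.e., precisely the case where bijectivity would fail at the next stage.
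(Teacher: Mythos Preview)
Your approach is correct and matches the paper's one-line argument: the corollary is deduced from the bijectivity of $\pi$ (Corollary~\ref{sl2.bijective.env}) together with the Kimura sequence~\eqref{e.kimura2}. Your diagram chase reducing the claim to the assertion that $\opk_{D'}^\circ(B)\to\opk_{D'}^\circ(E)$ is an isomorphism is exactly right, as is the use of the Kan extension property to pass to reductions.

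However, the ``delicate point'' you flag, and the proposed iteration, are unnecessary. The map $E_{\mathrm{red}}\to B_{\mathrm{red}}$ is in fact an \emph{isomorphism of schemes}, so no recursion is needed. Here is why. Since $\pi$ is bijective and $B$, $E$ are $G'$-invariant, the orbits in $E_{\mathrm{red}}$ and $B_{\mathrm{red}}$ are in bijection; by Lemma~\ref{sl2.envelope} each orbit in $B_{\mathrm{red}}$ is the isomorphic image of the corresponding orbit in $E_{\mathrm{red}}$. Moreover, the closed $G'$-orbits of $\tilde Y$ are pairwise disjoint in every case of the classification (for $\mathbb{F}_n$ one has $C_+\cdot C_-=0$; for $P_n$ the image of $C_+$ avoids the contracted point; the other surfaces have a unique closed orbit). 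Hence $E_{\mathrm{red}}=\coprod O'_i$ and, since $\pi$ is a bijection, $B_{\mathrm{red}}=\coprod O_i$ with $\pi|_{O'_i}\colon O'_i\xrightarrow{\sim}O_i$. Thus $E_{\mathrm{red}}\to B_{\mathrm{red}}$ is an isomorphism, and the Kimura sequence gives the result in one step.

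A small caution about your fallback iteration: ``bijective'' alone does not force $E_{\mathrm{red}}\to B_{\mathrm{red}}$ to be birational (consider a residue-field extension at a closed point over a non-closed ground field), so to make the inductive step go through you would still need to invoke the orbit argument above rather than bare bijectivity. Once you do, the induction collapses anyway.
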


\medskip
Since $T\isom D\times H$, it follows from \cite[Corollary 5.6]{ap} that
\[
  \opk_T^\circ(X^H) \isom \opk_{D}^\circ(X^H) \otimes R(H).
\]
Our analysis therefore reduces to computing $\opk_{D}^\circ(Y)$ in all 
cases listed above.  In each case, $Y$ has finitely many $D$-fixed points, so we will compute $\opk_{D}^\circ(Y)$ as a subring of $\opk_{D}^\circ(Y^{D})$, which is a direct sum of finitely many copies of $R(D)\isom R(\mathbb{G}_m)$.

Moreover, the homomorphism $D' \to D$ is either an isomorphism or a double cover, so the corresponding homomorphism $R(D) \to R(D')$ is either an isomorphism or an injection which may be identified with the inclusion $\ZZ[e^{\pm 2t}] \hookrightarrow \ZZ[e^{\pm t}]$.  
In view of Lemma~\ref{sl2.envelope} and its corollaries, then, it suffices to describe $\opk_{D'}^\circ(Y)$, where $Y$ is one of the six normal $G'$-varieties listed above, or the surface with a double curve obtained by identifying $C_+$ and $C_-$ in $\mathbb{F}_n$. In fact, if $\chi$ is a root of $G$, then the homomorphism $R(D) \to R(D')$ maps $e^\chi$  
to $e^{2t}$. When $D'\to D$ is a double cover, $t$ is not a character of $D$, only $\chi$ is. But since $R(D)$ embeds in 
$R(D')$, the localized description of $\opk_{D}^\circ (Y)$ will be defined by the same divisibility conditions as that 
of $\opk_{D'}^\circ (Y)$, just taken in the subring $R(D)\subset R(D')$. 

\smallskip  

If $Y$ is a $G'$-fixed point, then $\opk_{D'}^\circ(Y)\simeq R(D')$.

If $Y=\mathbb{P}^1$, then $\opk_{D'}^\circ(Y)\simeq \{(f,g)\in R(D')^{\oplus 2}\,|\, f- g \equiv 0 \mod 1-\ee^{-\alpha}\}$, where $\alpha=2t$ is the positive root of $G'$.

For the cases (3) to (5), we shall obtain an explicit presentation of the equivariant $K$-theory rings by following Brion's description of the corresponding equivariant Chow groups \cite[Proposition 7.2]{brion-chow}.  Recall that the character $t$ identifies $D'$ with $\mathbb{G}_m$, as in Notation~\ref{n.fixed}, so $R(D') \isom \ZZ[\ee^{\pm t}]$.

For the projective plane $\mathbb{P}(V)$, with $V=\Sym^2\CC^2$, the weights of $D'$ acting on $V$ are $-2t$, $0$, and $2t$.  We denote by $x,y,z$ the corresponding $D'$-fixed points, so $x=[1,0,0]$, $y=[0,1,0]$, and $z=[0,0,1]$.  We make the identification $\opk_{D'}^\circ(\mathbb{P}(V)^{D'}) = R(D')^{\oplus 3}$, using this ordering of fixed points.

For $\mathbb{P}^1\times \mathbb{P}^1$ with the diagonal action of $G'=SL_2$, the torus $D'$ acts diagonally with weights $-t,t$ on each factor.  This action has exactly four fixed points, which we write as $x = ([1,0],[1,0])$, $y=([0,1],[1,0])$, $z=([1,0],[0,1])$, and $w=([0,1],[0,1])$, and identify $\opk_{D'}^\circ((\PP^1\times\PP^1)^{D'}) = R(D')^{\oplus 4}$ using this ordering.  

Finally, for a Hirzebruch surface $\mathbb{F}_n$ ($n\geq 1$) with ruling $\pi\colon \mathbb{F}_n\to \mathbb{P}^1$, there are exactly four $D'$-fixed points $x,y,z,w$, where $x,z$ (resp. $y,w$) are mapped to $0=[1,0]$ (resp. $\infty=[0,1]$) by $\pi$.  We assume that $x$ and $y$ lie in the $G'$-invariant section $C_+$ (with positive self-intersection), and that $z$ and $w$ lie in the negative $G'$-invariant section $C_-$.  With this ordering of the fixed points, we identify $\opk_{D'}^\circ(\mathbb{F}_n^{D'})$ with $R(D')^{\oplus 4}$.

\begin{theorem}\label{t.spherical-localize}
With notation as above, for $Y$ one of these three surfaces, the image of the  homomorphism $\iota_{D'}^* \colon \opk_{D'}^\circ(Y) \to \opk_{D'}^\circ(Y^{D'})$ is as follows.

\begin{enumerate}[itemsep=3pt]

\item ($Y=\PP(V)$.)  Triples $(f_x,f_y,f_z)$ such that
\begin{align*}
f_x-f_y\equiv f_y-f_z &\equiv 0 \mod (1-\ee^{-2t}), \\
f_x-f_z &\equiv 0 \mod (1-\ee^{-4t}), \\
\intertext{and}
f_x - \ee^{-2t}(1+\ee^{-2t})\,f_y + \ee^{-6t}\, f_z & \equiv 0 \mod (1-\ee^{-2t})(1-\ee^{-4t}).
\end{align*}

\item ($Y=\PP^1\times\PP^1$.) Quadruples $(f_x,f_y,f_z,f_w)$ such that
\begin{align*}
f_x- f_y \equiv f_x- f_z\equiv f_y- f_w \equiv f_z -  f_w &\equiv 0 \mod (1-\ee^{-2t})
\intertext{and}
f_x - \ee^{-2t}\,f_y - \ee^{-2t}\,f_z + \ee^{-4t}\,f_w &\equiv 0 \mod (1-\ee^{-2t})^2.
\end{align*}

\item ($Y=\mathbb{F}_n$.)  Quadruples $(f_x,f_y,f_z,f_w)$ such that
\begin{align*}
f_x- f_y\equiv  f_z- f_w  &\equiv 0 \mod (1-\ee^{-2t}), \\
f_x- f_z \equiv f_y- f_w   &\equiv 0 \mod (1-\ee^{-nt}), \\
\intertext{and}
f_x + \ee^{-(n+2)t}\,f_y - \ee^{-nt}\,f_z - \ee^{-2t}\,f_w &\equiv 0 \mod (1-\ee^{-2t})(1-\ee^{-nt})
\end{align*}
\end{enumerate}
\end{theorem}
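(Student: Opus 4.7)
Each of the three surfaces $\PP(V)$, $\PP^1\times\PP^1$, and $\mathbb{F}_n$ is smooth, so by property (d) of \S\ref{ss.chow-K} the canonical map $K_{D'}^\circ(Y)\to \opk_{D'}^\circ(Y)$ is an isomorphism, and the question reduces to a computation in honest equivariant $K$-theory of vector bundles. Since $Y^{D'}$ is finite, localization (Theorem~\ref{loc.thm.cs}) shows $\iota_{D'}^*$ is injective, and it suffices to identify its image in $R(D')^{\oplus|Y^{D'}|}$.

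My plan is to give an explicit generators-and-relations presentation in each case and read off the restriction map. For $\PP(V)$ the projective bundle formula writes
\[
K_{D'}^\circ(\PP(V)) \cong R(D')[H]\big/{\textstyle\prod_{i=0}^2}(1-e^{-a_i}H),
\]
where $a_0,a_1,a_2=-2t,0,2t$ are the weights of $V=\Sym^2\CC^2$, so the restriction of a class $\xi=c_0+c_1 H+c_2 H^2$ to the fixed points is $(\xi|_{e^{2t}},\xi|_{1},\xi|_{e^{-2t}})$, computed by substituting $H\mapsto e^{-a_i}$. For $\PP^1\times\PP^1$ with the diagonal $SL_2$-action, the Künneth formula gives $K_{D'}^\circ(\PP^1)\otimes_{R(D')}K_{D'}^\circ(\PP^1)$, which is spanned by $1,H_1,H_2,H_1H_2$. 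For $\mathbb{F}_n=\PP(\OO\oplus\OO(n))$, the relative projective bundle formula over the base $\PP^1$ expresses $K_{D'}^\circ(\mathbb{F}_n)$ as a free $K_{D'}^\circ(\PP^1)$-module of rank two on $1$ and the tautological class $\xi=[\OO_{\mathbb{F}_n}(1)]$; the twist by $\OO(n)$ is what will produce the character $e^{-(n+2)t}$ in the final congruence.

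Having such presentations, I would verify the listed congruences in two steps. Necessity is direct substitution: each pairwise congruence is the classical GKM condition for the corresponding $D'$-invariant $\PP^1$ connecting two fixed points with tangent-weight difference a specified character, and the top-level congruence in each case is forced by the fact that the degree-two monomials ($H^2$ in case (1), $H_1H_2$ in case (2), $H\cdot\pi^*H'$ in case (3)) are subject to the defining relations modulo the product of characters. Sufficiency is a finite linear-algebra check over $R(D')$: the submodule of $R(D')^{\oplus n}$ cut out by the listed conditions is $R(D')$-free of rank equal to that of $K_{D'}^\circ(Y)$, and I would exhibit an explicit basis in its image—for instance, the images of $1$, $H$, $H^2$ for $\PP(V)$—which spans the submodule, thereby forcing equality of the two $R(D')$-modules.

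The main obstacle is the higher-degree congruence in each case: unlike the pairwise conditions, it does not come from a single invariant curve but records how the quadratic relation in the presentation distributes over the fixed points. The coefficients $e^{-2t}(1+e^{-2t})$ at $y$ for $\PP(V)$, and $e^{-(n+2)t}$ at $y$ for $\mathbb{F}_n$, must be extracted by carefully computing the weights of the tautological class and the twisting sheaf at the relevant fixed points; this is where the bookkeeping is delicate, since signs and characters must match precisely to yield the displayed formulas. Once this is done for all three surfaces, the proof is complete.
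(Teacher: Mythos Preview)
Your approach is sound and would succeed, but it differs from the paper's in how the higher-order congruence is obtained.  The paper does not read it off from a projective-bundle presentation; instead it invokes the integration formula: since $Y$ is complete, pushing forward to a point forces
\[
  \sum_{p\in Y^{D'}} \eem_p(Y)\cdot f_p \;\in\; R(D'),
\]
and clearing denominators in the explicit equivariant multiplicities (computed from the tangent weights, which the paper tabulates for $\mathbb{F}_n$) produces the three- and four-term congruences directly.  This gives a uniform conceptual source for the extra relation and bypasses the presentation-specific bookkeeping you rightly flag as the delicate step.  Your direct-substitution plan also establishes necessity, but it verifies a relation handed to you by the statement rather than deriving it.

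For sufficiency the two approaches converge in substance.  The paper carries out an explicit inductive subtraction using the Bia\l{}ynicki--Birula basis $[\OO_{\mathbb{F}_n}]$, $[\OO_{\pi^{-1}(\infty)}]$, $[\OO_{C_-}]$, $[\OO_{\{w\}}]$ to show that any tuple satisfying the congruences lies in their $R(D')$-span.  Your line-bundle bases would serve equally well, but be aware that the spanning claim is where the work lies: $R(D')\cong\ZZ[\ee^{\pm t}]$ is not a PID, so matching ranks of free submodules does not by itself force equality, and you will need the same kind of step-by-step reduction the paper performs for $\mathbb{F}_n$.
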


\begin{proof}
The two-term conditions come from $T$-invariant curves, as in \eqref{e.divisible} above.  The three- and four-term conditions may similarly be deduced from the requirement
\[
 \sum_{p\in Y^{D'}} \eem_p(Y)\cdot f_p \in R(D').
\]

To write these out, one needs computations of the tangent weights at each fixed point.  For $\PP(V)$ and $\PP^1\times\PP^1$, these computations are standard, using the actions specified.  For $\mathbb{F}_n$, we consider it as the subvariety of $\PP^2 \times\PP^1$ defined by
\[
 \mathbb{F}_n = \{ ([a_0,a_1,a_2], [b_1,b_2]) \,|\, a_1b_1^n = a_2b_2^n\},
\]
with $D'$ acting by
\[
  \zeta\cdot ([a_0,a_1,a_2], [b_1,b_2]) = ([a_0,\zeta^{n}a_1,\zeta^{-n}a_2], [\zeta^{-1}b_1,\zeta b_2]).
\]
The weights on fixed points of $\mathbb{F}_n$ are as follows:
\[
\begin{array}{c|c}
{\rm Fixed \,point} & {\rm weights} \\
\hline
 x = ([0,0,1],[1,0]) & 2t, nt \\
 y = ([0,1,0],[0,1]) & -2t, -nt \\
 z = ([1,0,0],[1,0]) & 2t, -nt \\
 w = ([1,0,0],[0,1]) & -2t, nt
\end{array}
\]

Now the three-term relation for $\PP(V)$ comes from clearing denominators in the condition that
\[
 \frac{f_x}{(1-\ee^{-2t})(1-\ee^{-4t})} + \frac{f_y}{(1-\ee^{-2t})(1-\ee^{2t})} + \frac{f_z}{(1-\ee^{2t})(1-\ee^{4t})},
\]
belong to $R(D')$.  Similarly, the four-term relation for $\PP^1\times\PP^1$ and $\mathbb{F}_n$ come from requiring that
\[
 \frac{f_x}{(1-\ee^{-2t})^2} + \frac{f_y}{(1-\ee^{-2t})(1-\ee^{2t})} + \frac{f_z}{(1-\ee^{-2t})(1-\ee^{2t})} + \frac{f_w}{(1-\ee^{2t})^2}
\]
and
{\small
\[
 \frac{f_x}{(1-\ee^{-2t})(1-\ee^{-nt})} + \frac{f_y}{(1-\ee^{2t})(1-\ee^{nt})} + \frac{f_z}{(1-\ee^{-2t})(1-\ee^{nt})} + \frac{f_w}{(1-\ee^{2t})(1-\ee^{-nt})},
\]
}
respectively, belong to $R(D')$.

To see that the divisibility conditions are sufficient, one can use a Bia{\l}ynicki-Birula decomposition to produce an $R(D')$-linear basis of $K_{D'}^\circ(Y)$, and verify that the conditions guarantee a tuple may be expressed as a linear combination of such basis elements. 
We carry out this explicitly for the case $Y=\mathbb{F}_n$,   
and leave the other cases as exercises, since they can be checked in a similar way. 
We proceed inductively. For any $f_x\in R(D')$, the element $(f_x,f_x,f_x, f_x)=f_x\cdot(1,1,1,1)$ 
is certainly in the image of $\iota_D^*$, because $(1,1,1,1)=\iota_D^*([\mathcal{O}_{\mathbb{F}_n}])$. 
To see that $(f_x,f_y,f_z,f_w)$ is in the image, it suffices 
to show that $(0,f_y-f_x,f_z-f_x,f_w-f_x)$ is in the image; that is, we may assume the first entry is zero. 
By the divisibility conditions, we can write such an element as $(0,(1-e^{-2t})g_y,g_z,g_w)$. 
Now note that $-e^{-2t}g_y[\mathcal{O}_{\pi^{-1}(\infty)}]\in K_{D'}^\circ(\mathbb{F}_n)$ 
restricts to $(0,(1-e^{-2t})g_y,0,(1-e^{-2t})g_y)$, and by subtracting this, we reduce to the case where the 
first two entries are zero. So, again by the divisibility conditions, 
it suffices to prove that $(0,0,(1-e^{-nt})h_z,h_w)$ lies in the image. 
Next, observe that the element $-e^{-nt}h_z[\mathcal{O}_{C_-}]\in K_{D'}^\circ(\mathbb{F}_n)$ restricts to $(0,0,(1-e^{-nt})h_z,-e^{-nt}(1-e^{-nt})h_z)$, and by subtracting this, we can reduce finally to the 
case where the first three entries are zero. Thus, by the divisibility conditions, it suffices to prove that $(0,0,0,(1-e^{-2t})(1-e^{-nt})s_w)$ lies in the image. But this is the restriction 
of $-s_w e^{-2t}[\mathcal{O}_{\{w\}}]\in K_{D'}^\circ(\mathbb{F}_n)$. 

In summary, we have shown that any element $(f_x,f_y,f_z,f_w)\in R(D')^{\oplus 4}$ that satisfies 
the divisibility conditions belongs to the linear span of 
the images of the classes 
$[\mathcal{O}_{\mathbb{F}_n}]$, $[\mathcal{O}_{\pi^{-1}(\infty)}]$, $[\mathcal{O}_{C_-}]$, and $[\mathcal{O}_{\{w\}}]$. 
Since these classes freely generate $K_{D'}^\circ (\mathbb{F}_n)$, the result follows.     
\end{proof}

\begin{remark}
The conditions presented here complete the description claimed in \cite[Theorem~1.1]{ekt-sph}, where the three- and four-term relations are missing. To see that these relations are indeed necessary, consider the case 
$Y=\PP(V)$.  
Then $K_{D'}^\circ (\PP(V))$ is freely generated by the classes of the structure sheaves of 
the point $z$, 
the line $(yz)$ and the whole $\PP(V)$. 
These classes restrict respectively to  
\[
(0,0,(1-\ee^{-2t})(1-\ee^{-4t})),\;\, (0,1-\ee^{-2t},\,1-\ee^{-4t}), \;\, (1,1,1).
\] 
Certainly they satisfy the divisibility relations.  
However, the triple $(0,0,1-e^{-4t})$ satisfies the two-term conditions   
of \cite[Theorem~1.1]{ekt-sph}, 
but it does not lie in the span of those basis elements.  
\end{remark}

\smallskip

Next, we consider the case when $Y$ is the normal surface $P_n$ obtained by contracting the unique section $C_{-}$ of negative self-intersection in $\mathbb{F}_n$, as in item (\ref{Pn}) above.  For $n>1$, this surface is singular.  We use the fact that the map $q:\mathbb{F}_n\to P_n$, which contracts $C_-$ to a fixed point, is an (equivariant) envelope to calculate $\opk_D^\circ(Y)$ from $\opk_D^\circ(\mathbb{F}_n)$ using the Kimura sequence.

\begin{lemma}
Let $P_n=\mathbb{F}_n/C_-$ be the weighted projective plane obtained by contracting the unique section $C_-$ of negative self-intersection in $\mathbb{F}_n$, so that the fixed points of $P_n$ are identified with $x$, $y$, $z$.  Then the image of $\opk_{D'}^\circ(P_n)\to R(D')^{\oplus 3}$ consists of all triples $(f_x,f_y,f_z)$ such that
\begin{align*}
f_x-f_z \equiv f_y-f_z &\equiv 0 \mod 1-e^{-nt},\\
f_x-f_y &\equiv 0 \mod 1-e^{-2t},\\
\intertext{and}
f_x + \ee^{-(n+2)t} f_y - (\ee^{-2t}+\ee^{nt}) f_z &\equiv 0 \mod (1-e^{-nt})(1-\ee^{-2t}).
\end{align*}
\end{lemma}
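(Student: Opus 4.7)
The plan is to obtain the description of $\opk_{D'}^\circ(P_n)$ directly from the previously established description of $\opk_{D'}^\circ(\mathbb{F}_n)$ via the Kimura sequence \eqref{e.kimura2}, applied to the contraction morphism $q\colon \mathbb{F}_n \to P_n$. This $q$ is a $G'$-equivariant envelope which is birational and restricts to an isomorphism $\mathbb{F}_n \setminus C_- \xrightarrow{\sim} P_n \setminus \{z\}$, where $z \in P_n$ denotes the image of the contracted curve $C_-$. Taking $Y = P_n$, $Y' = \mathbb{F}_n$, $B = \{z\}$, $E = C_-$, and $X = Y$ with $f = \mathrm{id}$ in \eqref{e.kimura2} yields the exact sequence
\[
 0 \to \opk_{D'}^\circ(P_n) \to \opk_{D'}^\circ(\mathbb{F}_n) \oplus \opk_{D'}^\circ(\{z\}) \to \opk_{D'}^\circ(C_-),
\]
where the second map sends $(\alpha, f_z)$ to $\alpha|_{C_-} - q|_{C_-}^*(f_z)$.

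Next, I would identify the pieces explicitly. The previous theorem identifies $\opk_{D'}^\circ(\mathbb{F}_n)$ with the tuples $(f_x,f_y,f_z,f_w)$ in $R(D')^{\oplus 4}$ satisfying the four divisibility conditions of case (3). The pullback $q|_{C_-}^*(f_z)$ is the constant tuple $(f_z,f_z)$ in $\opk_{D'}^\circ(C_-^{D'}) \subseteq R(D')^{\oplus 2}$, while $\alpha|_{C_-}$ equals the pair $(f_z^{\mathbb{F}_n}, f_w^{\mathbb{F}_n})$ read off from the restriction of $\alpha$ at the two fixed points of $C_-$. Exactness therefore translates, under the identification $P_n^{D'} = \{x,y,z\}$ (with the single fixed point $z\in P_n$ being the image of both $z,w\in\mathbb{F}_n$), to the assertion that $\opk_{D'}^\circ(P_n)$ is the preimage under the forgetful projection
\[
 \opk_{D'}^\circ(\mathbb{F}_n) \to R(D')^{\oplus 3}, \quad (f_x,f_y,f_z,f_w)\mapsto (f_x,f_y,f_z),
\]
of the locus where $f_z = f_w$; equivalently, the image of $\opk_{D'}^\circ(P_n)$ in $R(D')^{\oplus 3}$ consists of those triples $(f_x,f_y,f_z)$ arising from tuples $(f_x,f_y,f_z,f_z) \in \opk_{D'}^\circ(\mathbb{F}_n)$.

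The final step is a purely mechanical substitution: impose $f_w = f_z$ in the four divisibility conditions for $\mathbb{F}_n$. The relation $f_z - f_w \equiv 0 \pmod{1-\ee^{-2t}}$ becomes trivial, the relations $f_x - f_y \equiv 0 \pmod{1-\ee^{-2t}}$ and $f_x - f_z \equiv f_y - f_w = f_y - f_z \equiv 0 \pmod{1-\ee^{-nt}}$ are exactly the three two-term conditions in the statement, and the four-term relation collapses precisely to
\[
 f_x + \ee^{-(n+2)t} f_y - (\ee^{-2t} + \ee^{-nt}) f_z \equiv 0 \pmod{(1-\ee^{-2t})(1-\ee^{-nt})}.
\]

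The main technical point is to confirm that the identification of $\opk_{D'}^\circ(C_-)$ and the restriction maps in the Kimura sequence are the ones I have described; this amounts to checking that $C_- \cong \mathbb{P}^1$ is embedded with the tangent-weight data implicit in the description of $\opk_{D'}^\circ(\mathbb{F}_n)$, and that the pullback along $q|_{C_-}\colon C_- \to \{z\}$ really is the constant map. Once these compatibilities are in hand, no further case analysis or explicit basis construction is needed: the lemma falls out of the Kimura sequence together with the preceding theorem.
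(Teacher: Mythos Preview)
Your approach is exactly the paper's: apply the second Kimura sequence to the birational envelope $q\colon \mathbb{F}_n \to P_n$, deduce that the image of $\opk_{D'}^\circ(P_n)$ in $R(D')^{\oplus 3}$ is the set of triples $(f_x,f_y,f_z)$ for which $(f_x,f_y,f_z,f_z)$ lies in $\opk_{D'}^\circ(\mathbb{F}_n)$, and then substitute $f_w=f_z$ into the relations of Theorem~\ref{t.spherical-localize}(3). The paper's proof is precisely this, stated in two sentences.

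One point worth flagging: your substitution gives the three-term relation with coefficient $-(\ee^{-2t}+\ee^{-nt})$ in front of $f_z$, whereas the lemma as stated has $-(\ee^{-2t}+\ee^{nt})$. Your version is the correct one. Direct substitution of $f_w=f_z$ into $f_x + \ee^{-(n+2)t}f_y - \ee^{-nt}f_z - \ee^{-2t}f_w$ yields $-(\ee^{-nt}+\ee^{-2t})f_z$, and one can check that the printed version fails already for the class $[\OO_{P_n}]$, whose restriction is $(1,1,1)$: with $\ee^{-nt}$ one gets $1+\ee^{-(n+2)t}-\ee^{-2t}-\ee^{-nt}=(1-\ee^{-2t})(1-\ee^{-nt})$, which vanishes modulo the required ideal, while with $\ee^{nt}$ the expression $1+\ee^{-(n+2)t}-\ee^{-2t}-\ee^{nt}$ is not divisible by $(1-\ee^{-2t})(1-\ee^{-nt})$ for $n\geq 2$. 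So the sign in the exponent is a typo in the statement (propagated to Theorem~\ref{opk.sph.thm}(5)); your computation is correct and your argument is complete.
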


\begin{proof}
Note that $\pi\colon\mathbb{F}_n\to P_n$ is an envelope.  We write $(\mathbb{F}_n)^{D'}=\{x',y',z',w'\}$, so that $x'\mapsto x$, $y'\mapsto y$, and $z',w'\mapsto z$.  By the Kimura sequence, an element $(f_{x'},f_{y'},f_{z'},f_{w'})\in \opk_{D'}^\circ((\mathbb{F}_n)^{D'})$ lies in the image of $\pi^*$ if and only if it satisfies the relations defining $\opk_{D'}^\circ(\mathbb{F}_n)$, together with the extra relation $f_{z'}=f_{w'}$ (which accounts for the fact that $C_-$ is collapsed to a point in $P_n$).  The relations from Theorem~\ref{t.spherical-localize}(3) reduce to those asserted here.
\end{proof}

Finally, we consider the case when the surface with a double curve obtained by identifying the sections $C_+$ and $C_-$ in $\mathbb{F}_n$ appears as an irreducible component of $X^H$.

\begin{lemma}
Let $\mathcal{K}_n$ be the non-projective algebraic surface with an ordinary double curve obtained by identifying the curves $C_+$ and $C_-$ of the surface $\mathbb{F}_n$, so that the fixed points of $\mathcal{K}_n$ are identified with $x$, $y$.  Then the image of $\opk_{D'}^\circ(\mathcal{K}_n)\to R(D')^{\oplus 2}$ consists of all $(f_x,f_y)$ such that $f_x-f_y\equiv 0 \mod 1-e^{-2t}$.
\end{lemma}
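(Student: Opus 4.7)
The plan is to apply the Kimura envelope sequence~\eqref{e.kimura1} to the normalization $\pi\colon \mathbb{F}_n \to \mathcal{K}_n$.  This map is proper, bijective over the complement of the double curve, finite over it, and a $G'$-equivariant envelope.  Taking $X = Y = \mathcal{K}_n$ and $Y' = \mathbb{F}_n$, the sequence reads
\[
 0 \to \opk_{D'}^\circ(\mathcal{K}_n) \to \opk_{D'}^\circ(\mathbb{F}_n) \xrightarrow{p_1^* - p_2^*} \opk_{D'}^\circ(\mathbb{F}_n \times_{\mathcal{K}_n} \mathbb{F}_n),
\]
so it suffices to identify the kernel of $p_1^* - p_2^*$ and read off the restriction to the two $D'$-fixed points of $\mathcal{K}_n$.

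First I would analyze the fiber product.  It decomposes as the diagonal copy of $\mathbb{F}_n$ together with two additional components, each isomorphic to $\mathbb{P}^1$, corresponding to the graphs of the $G'$-equivariant gluing isomorphism $\phi\colon C_+ \to C_-$ and its inverse.  On the diagonal, the equality $p_1^*f = p_2^*f$ is automatic, while on each of the other components it amounts to $f|_{C_+} = \phi^*(f|_{C_-})$ as classes in $\opk_{D'}^\circ(\mathbb{P}^1)$.  Since that ring embeds in its fixed-point ring by Theorem~\ref{loc.thm.cs}, the condition may be checked on the $D'$-fixed points of $C_+$.

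Next I would pin down the induced bijection on fixed points.  Because $\phi$ is $G'$-equivariant and each of $C_\pm$ is $G'$-isomorphic to $G'/B'$, it sends the unique $B'$-fixed point of $C_+$ to that of $C_-$.  The tangent-weight table recorded in the proof of Theorem~\ref{t.spherical-localize} identifies these $B'$-fixed points as $x$ on $C_+$ and $z$ on $C_-$, so $\phi(x) = z$ and, by $D'$-equivariance, $\phi(y) = w$.  Thus a tuple $(f_x, f_y, f_z, f_w) \in \opk_{D'}^\circ(\mathbb{F}_n)$ descends to $\mathcal{K}_n$ if and only if $f_z = f_x$ and $f_w = f_y$, and its image in $R(D')^{\oplus 2}$ is $(f_x, f_y)$.

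The final step is to substitute $f_z = f_x$ and $f_w = f_y$ into the four relations of Theorem~\ref{t.spherical-localize}(3).  The two $(1-\ee^{-2t})$-conditions both become $f_x - f_y \equiv 0 \mod (1 - \ee^{-2t})$; the two $(1-\ee^{-nt})$-conditions collapse to $0$; and the four-term relation factors as $(1-\ee^{-nt})(f_x - \ee^{-2t} f_y) \equiv 0 \mod (1-\ee^{-2t})(1-\ee^{-nt})$, which after cancelling $(1-\ee^{-nt})$ in the domain $R(D')$ and using $\ee^{-2t} \equiv 1 \mod (1-\ee^{-2t})$ reduces once more to $f_x - f_y \equiv 0 \mod (1-\ee^{-2t})$.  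For the converse, any pair $(f_x, f_y)$ satisfying this congruence yields the tuple $(f_x, f_y, f_x, f_y)$, which visibly meets every condition of Theorem~\ref{t.spherical-localize}(3) and hence lifts to a class on $\mathcal{K}_n$.  The only step requiring real care is the $G'$-equivariant identification $\phi(x) = z$, $\phi(y) = w$; the remaining verifications are bookkeeping.
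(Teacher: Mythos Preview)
Your argument is correct and follows essentially the same route as the paper: both use the Kimura sequence for the envelope $\pi\colon \mathbb{F}_n \to \mathcal{K}_n$ to identify $\opk_{D'}^\circ(\mathcal{K}_n)$ with the classes on $\mathbb{F}_n$ satisfying $f_x=f_z$ and $f_y=f_w$, and then observe that the relations of Theorem~\ref{t.spherical-localize}(3) collapse to the single congruence $f_x-f_y\equiv 0\bmod(1-\ee^{-2t})$. You have simply supplied the details the paper leaves implicit, including the analysis of $\mathbb{F}_n\times_{\mathcal{K}_n}\mathbb{F}_n$ and the verification that the $G'$-equivariant gluing sends $x\mapsto z$, $y\mapsto w$.
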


\begin{proof}
Identifying the curves $C_+$ and $C_-$ of $\mathbb{F}_n$ implies that we identify the fixed points $x$ with $z$, and $y$ with $w$.  Using the Kimura sequences, we see that the relations describing $\opk_{D'}^\circ(\mathbb{F}_n)$ reduce, after this identification, to the asserted ones.
\end{proof}

Summarizing our previous results, in view of Theorem~\ref{loc.thm.cs} and Lemma~\ref{l.irr}, yields the main result of this section.  It is an extension of Brion's work on the equivariant Chow rings of complete nonsingular spherical varieties (\cite[Theorem 7.3]{brion-chow}) to the equivariant operational $K$-theory of possibly singular complete spherical varieties.  For the corresponding statement in rational equivariant operational Chow cohomology see \cite{go.tlin}.

\begin{theorem}\label{opk.sph.thm}
Let $X$ be a complete spherical $G$-variety.  
The image of the injective map
\[
\iota^*\colon\opk^\circ_T(X)\to \opk^\circ_T(X^T)
\]
consists of all families
$(f_x)_{x\in X^T}\in \bigoplus_{x\in X^T}R(T)$
satisfying the following relations:
\begin{enumerate}[itemsep=3pt]
\item $f_x-f_y\equiv 0  \mod (1-\ee^{-\chi})$, whenever $x,y$ are connected by a $T$-invariant curve with weight $\chi$.

\item 
$f_x - \ee^{-\chi}(1+\ee^{-\chi})\,f_y + \ee^{-3\chi}\, f_z  \equiv 0 \mod (1-\ee^{-\chi})(1-\ee^{-2\chi})$ whenever $\chi$ is a root, and $x$, $y$, $z$ lie in an irreducible component of $X^{\ker(\chi)^\circ}$ whose normalization is $SL_2$-equivariantly isomorphic to $\mathbb{P}(V)$.

\item 
$f_x - \ee^{-\chi}\,f_y - \ee^{-\chi}\,f_z + \ee^{-2\chi}\,f_w \equiv 0 \mod (1-\ee^{-\chi})^2$, whenever $\chi$ is a root, and $x$, $y$, $z$, $w$ lie in an irreducible component of $X^{\ker(\chi)^\circ}$ whose normalization is 
$SL_2$-equivariantly isomorphic to $\mathbb{P}^1\times \mathbb{P}^1$.

\item  
$f_x + \ee^{-(n+2)\chi/2}\,f_y - \ee^{-n\chi/2}\,f_z + \ee^{-\chi}\,f_w \equiv 0 \mod (1-\ee^{-\chi})(1-\ee^{-n\chi/2})$, where $\chi$ is a root, and $x$, $y$, $z$, $w$ lie in an irreducible component of $X^{\ker(\chi)^\circ}$ whose normalization is $SL_2$-equivariantly isomorphic to the Hirzebruch surface $\mathbb{F}_n$ for $n\geq 1$.  (The case of odd $n$ is possible only when $\chi/2$ is a weight of $T$.)

\item
$f_x + \ee^{-(n+2)\chi/2} f_y - (\ee^{-\chi}+\ee^{n\chi/2}) f_z \equiv 0 \mod (1-e^{-n\chi/2})(1-\ee^{-\chi})$, where $\chi$ is a root, and $x$, $y$, $z$
lie in an irreducible component of $X^{\ker(\chi)^\circ}$ whose normalization is $SL_2$-equivariantly isomorphic to the weighted projective plane $P_n$ obtained by contracting the curve $C_-$ of negative self-intersection in $\mathbb{F}_n$. 
\qed
\end{enumerate} 
\end{theorem}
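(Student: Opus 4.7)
The plan is to assemble the statement directly from the pieces already established: Theorem~\ref{loc.thm.cs} reduces $\iota^*\opk_T^\circ(X)$ inside $\opk_T^\circ(X^T)$ to the intersection, over codimension-one subtori $H\subset T$, of the images of $\opk_T^\circ(X^H)\to \opk_T^\circ(X^T)$. So I would loop over each such $H$, use the splitting $T\cong D\times H$ (where $D$ is chosen as in Notation~\ref{n.fixed}) to write $\opk_T^\circ(X^H) \isom \opk_{D}^\circ(X^H)\otimes R(H)$, and translate the resulting $R(D)$-divisibility conditions into $R(T)$-conditions on tuples $(f_x)_{x\in X^T}$.

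The regular and singular cases are treated separately. When $H$ is regular, $X^H$ is at most one-dimensional, so each connected component is either a point or a $T$-invariant curve with weight $\chi$, contributing precisely the two-term relations of (1) (via the divisibility in $R(D)$ explained at equation~\eqref{e.divisible}). When $H$ is singular, $H=\ker(\chi)^\circ$ for a positive root $\chi$, and by Brion each irreducible component $Y$ of $X^H$ is an $SL_2$-spherical variety of dimension $\leq 2$. I would then apply Lemma~\ref{l.irr} to $X^H$, reducing to computing the image of $\iota_{D'}^*\colon \opk_{D'}^\circ(Y)\to \opk_{D'}^\circ(Y^{D'})$ for each irreducible component $Y$. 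By Corollary~\ref{c.normalization}, except for the $\mathcal{K}_n$-surface, normalization gives an isomorphism $\opk_{D'}^\circ(Y)\isom \opk_{D'}^\circ(\widetilde Y)$, and Akhiezer's classification lists the possibilities for $\widetilde Y$: point, $\mathbb{P}^1$, $\mathbb{P}(V)$, $\mathbb{P}^1\times\mathbb{P}^1$, $\mathbb{F}_n$, or $P_n$. The corresponding explicit divisibilities are exactly Theorem~\ref{t.spherical-localize} together with the two subsequent lemmas for $P_n$ and $\mathcal{K}_n$; the $\mathcal{K}_n$ case only contributes two-term relations already subsumed into (1).

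The remaining bookkeeping is translating the divisibility conditions, which are originally phrased using the $D'$-character $t$ with $\alpha=2t$, into conditions on $R(T)$ using the root $\chi$. Because $D'\to D$ is either an isomorphism or a double cover and $R(D)\hookrightarrow R(D')$ sends $\ee^\chi\mapsto \ee^{2t}$, the $D'$-relations become $R(T)$-relations written in terms of $\ee^{\chi/2}$; they lie in $R(D)\subset R(D')$ exactly when the combination is genuinely a polynomial in $\ee^{\pm\chi}$ (and $\chi/2$ actually appears as a character only in the $\mathbb{F}_n$/$P_n$ cases with $n$ odd, as noted parenthetically). The compatibility conditions from Lemma~\ref{l.irr} at $T$-fixed points shared between irreducible components are automatic from the formulation, since a single value $f_x$ is assigned to each $x\in X^T$.

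The main obstacle, and the only real work not already done in earlier sections, is the case analysis itself: verifying that the divisibility conditions listed in Theorem~\ref{t.spherical-localize} for each of the normal $SL_2$-surfaces are not only necessary (an Atiyah–Bott integration argument using the explicit tangent weights at each $D'$-fixed point) but also sufficient (a Bia{\l}ynicki–Birula style argument, as carried out in detail for $\mathbb{F}_n$ in the proof of Theorem~\ref{t.spherical-localize}, producing an explicit $R(D')$-basis whose restrictions span all admissible tuples). Once both directions are in hand for the five surface cases, the statement of Theorem~\ref{opk.sph.thm} is obtained by intersecting these $H$-by-$H$ descriptions and reindexing the relations in terms of roots $\chi$ and the corresponding irreducible components of $X^{\ker(\chi)^\circ}$.
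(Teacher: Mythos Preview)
Your proposal is correct and follows essentially the same route as the paper: the theorem is presented there as a direct summary of the preceding results, assembled via Theorem~\ref{loc.thm.cs} and Lemma~\ref{l.irr}, the regular/singular dichotomy for codimension-one subtori, Corollary~\ref{c.normalization} together with Akhiezer's classification, and the explicit computations of Theorem~\ref{t.spherical-localize} and the two lemmas for $P_n$ and $\mathcal{K}_n$, with the $R(D')\to R(T)$ translation handled exactly as you describe. Your identification of the necessity/sufficiency verification in Theorem~\ref{t.spherical-localize} as the main substantive input is also on target.
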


\appendix
\section{Change-of-groups homomorphisms}\label{s.A.change-groups}

The goal of this appendix is to construct a natural change-of-groups homomorphism in operational $K$-theory.  
We start by briefly recalling some basic facts in equivariant $K$-theory.  See \cite{thomason-alg} and \cite{merkurjev} for details.

Let $G$ be an algebraic group.  
Recall that a $G$-scheme is a scheme $X$ together with an action morphism
$a\colon G\times X\to X$  that satisfies the usual identities \cite{thomason-alg}.  
Equivalently, a $G$-scheme is a scheme $X$ together with an action of $G(S)$ on the set $X(S)$ for each scheme $S$, functorially in $S$.  
A $G$-module $M$ over $X$ is a quasi-coherent $\OO_X$-module $M$
together with an isomorphism of $\OO_{G\times X}$-modules
\[
 \rho=\rho_M\colon  a^*(M)\xrightarrow{\sim} p_2^*(M)
\]
(where $p_2\colon G\times X\to X$ is the projection),
satisfying the cocycle condition
\[
  p_{23}^*(\rho)\circ ({\id}_G\times a)^*(\rho)= (m\times {\id}_X)^*(\rho),
\]
where $p_{23}\colon G\times G\times X \to G\times X$ is the projection and
$m\colon G\times G \to G$ is the product morphism.  
A morphism of $G$-modules is a morphism of modules $\alpha\colon M\to N$ such that $\rho_N\circ a^*(\alpha)=p_2^*(\alpha)\circ \rho_M$.  We write $\mathcal{M}(G,X)$ for the abelian category of coherent $G$-modules
over a $G$-scheme $X$, and set $K_\circ^G(X)$ to be the Grothendieck
group of this category.

A flat morphism $f\colon X\to Y$ of $G$-schemes
induces an exact functor
\[
 \mathcal{M}(G,Y)\to \mathcal{M}(G,X), \;\;\; M\mapsto f^*(M),
\]
and therefore defines the pull-back homomorphism
$f^*\colon K_\circ^G(Y)\to K_\circ^G(X)$.

Let $\pi\colon H\to G$ be a homomorphism of algebraic groups,
and let $X$ be a $G$-scheme.
The composition
\[
   H\times X \xrightarrow{\pi\times {\id}_X} G\times X \xrightarrow{a} X
\]
makes $X$ an $H$-scheme.
Given a $G$-module $M$ with the $G$-module structure defined by an isomorphism $\rho$,
we can introduce an $H$-module structure on $M$ via $(\pi\times {\id}_X)^*(\rho)$.
Thus, we obtain an exact functor
\[
  {\rm Res}_\pi\colon \mathcal{M}(G,X)\to \mathcal{M}(H,X)
\]
inducing the {\em restriction} homomorphism
\[
  {\rm res}_\pi\colon K_\circ^G(X)\to K_\circ^H(X).
\]
If $H$ is a subgroup of $G$, we write ${\rm res}_{G/H}$ for the restriction homomorphism ${\rm res}_{\pi}$, where $\pi\colon H\hookrightarrow G$ is the inclusion.

\medskip

Let $G$ and $H$ be algebraic groups, and let $f\colon X\to Y$ be a $G\times H$-morphism
of $G\times H$-varieties.  Assume that $f$ is a $G$-torsor (in particular, $G$ acts trivially on $Y$).  Let $M$ be a coherent $H$-module over $Y$.  Then $f^*(M)$ has a structure of a coherent $G\times H$-module over $X$ given by $p^*(\rho_M)$, where $p$ is the composition of the projection $G\times H\times X\to H\times X$ and the morphism $({\id}_H\times f)\colon H\times X \to H\times Y$.  Thus, there is an exact functor
\[
  f^0\colon \mathcal{M}(H,Y)\to \mathcal{M}(G\times H,X),\;\;\; M\mapsto p^*(M).
\]

\begin{proposition}[\protect{\cite[Proposition~2.3]{merkurjev}}] \label{Gtorsor.prop}
The functor $f^0$ is an equivalence of categories.  In particular, the homomorphism $K^H_\circ(Y)\to K^{G\times H}_\circ(X)$, induced by $f^0$, is an isomorphism.
\end{proposition}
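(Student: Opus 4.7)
The plan is to construct an explicit quasi-inverse to $f^0$ via faithfully flat descent along the torsor, carrying the commuting $H$-action along as a spectator structure. Since a torsor under an affine algebraic group is affine and faithfully flat, classical descent provides an equivalence $\mathcal{M}(Y) \cong \mathcal{M}(G,X)$, whose inverse sends a $G$-module $N$ to the $G$-invariants $(f_*N)^G$. The goal is to upgrade this equivalence to accommodate the additional $H$-action.

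Concretely, I would define $\Phi\colon \mathcal{M}(G\times H,X) \to \mathcal{M}(H,Y)$ by $\Phi(N) = (f_*N)^G$. Because the $H$- and $G$-actions on $N$ commute and $H$ acts trivially on $Y$, the $H$-module structure on $f_*N$ restricts to the $G$-invariants, producing a coherent $H$-module on $Y$. Coherence is preserved because $f$ is affine and faithfully flat, so finite generation descends along $f$.

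To show $\Phi$ is quasi-inverse to $f^0$, the next step is to verify that the unit $M \to \Phi(f^0 M) = (f_* f^* M)^G$ and the counit $f^0 \Phi(N) = f^*(f_*N)^G \to N$ are isomorphisms. Both statements are fppf-local on $Y$, so one may pass to a cover trivializing the torsor and reduce to the case $X = G \times Y$ with $f$ the second projection $p_2$. In this trivial case, the $G$-equivariance of $N$ forces the canonical map $p_2^*\Phi(N) \to N$ to be an isomorphism of $G$-modules (the core content of descent for the trivial torsor), and one checks directly that the $H$-structure is transported compatibly since $H$ acts only on the $Y$-factor. The unit isomorphism reduces similarly to the identity $(f_* p_2^* M)^G \cong M$.

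The main obstacle is ensuring that the fppf-locally defined isomorphisms assemble globally into morphisms of $G\times H$-equivariant sheaves, i.e., that the descent datum produced on $\Phi(N)$ is compatible with the $H$-module structure. This reduces to checking that the commutativity of the $G$- and $H$-actions on $N$ implies no extra cocycle obstruction beyond the one already handled by ordinary descent; since the $G$- and $H$-actions commute on the nose, functoriality of the constructions takes care of this step without further incident. The resulting equivalence of abelian categories induces the claimed isomorphism on Grothendieck groups.
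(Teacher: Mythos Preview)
The paper does not prove this proposition; it is quoted verbatim from \cite[Proposition~2.3]{merkurjev} and used as a black box in the construction of the change-of-groups homomorphisms. So there is no ``paper's own proof'' to compare against.

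Your argument via faithfully flat descent is the standard one and is essentially correct. A $G$-torsor $f\colon X\to Y$ with $G$ affine is an affine, faithfully flat morphism, and fppf descent identifies quasi-coherent sheaves on $Y$ with $G$-equivariant quasi-coherent sheaves on $X$; the inverse functor is indeed $N\mapsto (f_*N)^G$. Carrying the commuting $H$-structure along as spectator data is exactly the right move, and your localisation to the trivial torsor $G\times Y\to Y$ is how one verifies the unit and counit are isomorphisms. One small point worth tightening: you should say explicitly that coherence descends because it is an fppf-local property (your phrase ``finite generation descends along $f$'' is a little loose, since $f_*N$ itself need not be coherent when $G$ is positive-dimensional; what matters is that $(f_*N)^G$ is). Otherwise the outline is sound and matches the argument Merkurjev gives.
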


\begin{corollary}[\protect{\cite[Corollary~2.5]{merkurjev}}]\label{ind.cor}
Let $G$ be an algebraic group and let $H\subset G$ be a subgroup.  For every $G$-scheme $X$, there is a natural isomorphism
\[
  K_\circ^G(X\times (G/H))\simeq K^H_\circ(X).
\]
\end{corollary}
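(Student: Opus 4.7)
The plan is to apply Proposition~\ref{Gtorsor.prop} twice, using two different torsor structures on the auxiliary space $X\times G$. Endow $X\times G$ with the $G\times H$-action in which $G$ acts by $g'\cdot(x,g)=(g'x,g'g)$ (combining the given $G$-action on $X$ with left multiplication on $G$) and $H$ acts by $h\cdot(x,g)=(x,gh^{-1})$; these two actions commute, so $X\times G$ is indeed a $G\times H$-scheme.

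First, consider the projection
\[
\pi\colon X\times G\to X\times(G/H),\qquad (x,g)\mapsto(x,gH),
\]
where the target carries the diagonal $G$-action together with the trivial $H$-action. Then $\pi$ is $G\times H$-equivariant and realizes $X\times G$ as an $H$-torsor over $X\times(G/H)$, since the fibers are exactly the right $H$-orbits in $G$. Applying Proposition~\ref{Gtorsor.prop} with the roles of $G$ and $H$ interchanged yields an isomorphism
\[
K^G_\circ(X\times(G/H))\xrightarrow{\sim} K^{G\times H}_\circ(X\times G).
\]

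Second, let $\bar X$ denote $X$ viewed as a $G\times H$-scheme with the trivial $G$-action and the $H$-action obtained by restriction along $H\hookrightarrow G$, and consider the map
\[
\sigma\colon X\times G\to \bar X,\qquad (x,g)\mapsto g^{-1}x.
\]
A direct check shows that $\sigma$ is $G\times H$-equivariant: for $g_1\in G$, the $G$-action on $X\times G$ sends $(x,g)$ to $(g_1x,g_1g)$, which maps under $\sigma$ to $(g_1g)^{-1}g_1x=g^{-1}x$, consistent with the trivial $G$-action on $\bar X$; and for $h\in H$, the $H$-action sends $(x,g)$ to $(x,gh^{-1})$, which maps to $hg^{-1}x$, consistent with the restricted $H$-action on $\bar X$. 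Moreover $\sigma$ is a $G$-torsor, as each $G$-orbit in $X\times G$ contains the unique point $(g^{-1}x,e)$. Thus Proposition~\ref{Gtorsor.prop} produces
\[
K^H_\circ(X)=K^H_\circ(\bar X)\xrightarrow{\sim} K^{G\times H}_\circ(X\times G).
\]

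Composing the first isomorphism with the inverse of the second gives the desired natural isomorphism $K^G_\circ(X\times(G/H))\simeq K^H_\circ(X)$, and naturality in $X$ is automatic since both sides are built from pullback functors on module categories. The only substantive part of the argument is the equivariance and torsor verifications for $\sigma$ above; everything else is a direct appeal to Proposition~\ref{Gtorsor.prop}.
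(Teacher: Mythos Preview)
Your proof is correct and is precisely the standard argument that the paper intends by citing \cite[Corollary~2.5]{merkurjev} as a corollary of Proposition~\ref{Gtorsor.prop}: one introduces the intermediate $G\times H$-scheme $X\times G$ and applies the proposition once for the $H$-torsor $X\times G\to X\times(G/H)$ and once for the $G$-torsor $X\times G\to X$ (via the twisted projection $(x,g)\mapsto g^{-1}x$). The equivariance and torsor checks you carry out are exactly what is needed, and there is nothing to add.
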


In particular, by taking $X$ a point, we get $R(H)\simeq K_\circ^G(G/H)$.  On the other hand, by applying Proposition~\ref{Gtorsor.prop} to the $H$-torsor $G\to G/H$, we get $K_\circ(G/H)\simeq K_\circ^H(G)$.

\medskip

We will prove a version of Proposition~\ref{Gtorsor.prop} in equivariant operational $K$-theory.  For technical reasons, we must confine our statements to tori.  Let $T_1$ and $T_2$ be tori, and write $T=T_1\times T_2$.  Suppose $X \to Y$ is a $T$-equivariant morphism, with $T_1$ acting trivially.  Then we have
\[
  \opk^\circ_{T}(X \to Y) \isom R(T_1)\otimes \opk^\circ_{T_2}(X \to Y),
\]
by \cite[Corollary~5.6]{ap}.  (In \cite{ap}, this is only stated for the contravariant theory, but the proof is the same for the full bivariant theory.)  Using this identification, there is a pullback homomorphism
\[
  \opk^\circ_{T_2}(X \to Y) \to \opk^\circ_{T_1\times T_2}(X \to Y),
\]
sending $c \mapsto 1\otimes c$.

Next we consider a fiber diagram
\begin{diagram}
Z & \rTo & W  \\
\dTo^{\tilde{f}} &  & \dTo_f \\
X & \rTo & Y,
\end{diagram}
of $T_1\times T_2$-equivariant morphisms, still assuming $T_1$ acts trivially on $X$ and $Y$.  In this context, we have a homomorphism
\[
 f^*\colon \opk^\circ_{T_2}(X\to Y) \to \opk^\circ_{T_1\times T_2}(Z \to W),
\]
defined by composing the above change-of-groups pullback with the usual pullback across fiber squares.

\begin{proposition}\label{op.Gtorsor.prop}
In the above setup, assume $W \to Y$ is a $T_1$-torsor, so $Z \to X$ is also a $T_1$-torsor.  Then the pullback $f^*\colon \opk^\circ_{T_2}(X\to Y)\to \opk^\circ_{T_1\times T_2}(Z\to W)$ is an isomorphism.
\end{proposition}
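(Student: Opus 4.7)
The plan is to deduce this from Proposition~\ref{p.transf}, applied in both directions to an adjoint equivalence of categories, with Proposition~\ref{Gtorsor.prop} supplying the required natural isomorphism on homology.

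First I would set up the equivalence. Let $\catC$ be the category of $T_2$-schemes over $Y$, and let $\bar\catC$ be the category of $T_1\times T_2$-schemes $Z$ equipped with a $T_1\times T_2$-equivariant map to $W$ under which $Z \to Z/T_1$ is a $T_1$-torsor over a $T_2$-scheme over $Y$. The torsor pullback $\mathrm{tp}\colon X \mapsto X\times_Y W$ and the $T_1$-quotient $\mathrm{q}\colon Z \mapsto Z/T_1$ are mutually inverse equivalences, since $Z \to Z/T_1$ is isomorphic to the pullback of $W \to Y$ via $Z/T_1 \to Y$. Both functors preserve final objects, proper equivariant morphisms, fiber squares, and the distinguished orientations for flat morphisms and regular embeddings, and Proposition~\ref{Gtorsor.prop} provides a natural isomorphism $\tau\colon K^{T_2}_\circ(X) \xrightarrow{\sim} K^{T_1\times T_2}_\circ(\mathrm{tp}(X))$ compatible with proper pushforward and flat pullback.

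Applying Proposition~\ref{p.transf} with $F = \mathrm{tp}$, $L = \mathrm{q}$, and the isomorphism $\tau$ yields a Grothendieck transformation $\opk^\circ_{T_2}(X \to Y) \to \opk^\circ_{T_1\times T_2}(Z \to W)$; unwinding the construction in the proof of that proposition shows that it coincides with the explicit pullback $f^*$ defined in the preamble. Symmetrically, taking $F = \mathrm{q}$, $L = \mathrm{tp}$, and the inverse isomorphism $\tau^{-1}$ produces a transformation $\opk^\circ_{T_1\times T_2}(Z \to W) \to \opk^\circ_{T_2}(X \to Y)$. These two transformations are mutually inverse, because the construction in Proposition~\ref{p.transf} amounts to transport by $\tau$ fiberwise, and performing this with $\tau$ and then with $\tau^{-1}$ returns the original operator, using $\mathrm{q}\circ \mathrm{tp} \isom \id$ and $\mathrm{tp}\circ \mathrm{q} \isom \id$.

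The main obstacle will be verifying the categorical preliminaries in detail: specifically, that $\mathrm{tp}$ and $\mathrm{q}$ preserve the distinguished orientation classes $\mathcal{Z}$, so that the refinement of Proposition~\ref{p.transf} to operational theories applies, and that the resulting Grothendieck transformation indeed matches the explicit $f^*$. The first reduces to the standard observations that torsor pullback preserves flatness, smoothness, and regular embeddings, with the analogous statement for $T_1$-quotients following from freeness of the torsor action. Once these points are in place, the identification with $f^*$ and the mutual inverse property follow formally from the construction.
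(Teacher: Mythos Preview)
Your approach is genuinely different from the paper's and, with the acknowledged verifications carried out, should give a valid proof.  The paper proceeds by reduction to the smooth case: when $Y$ is smooth so is $W$, and the Poincar\'e isomorphisms identify both bivariant groups with the homology groups $K^{T_2}_\circ(X)$ and $K^{T_1\times T_2}_\circ(Z)$, whose isomorphism is Proposition~\ref{Gtorsor.prop}.  For general $Y$, the paper chooses a smooth birational equivariant envelope $\tilde Y\to Y$, pulls it back to a smooth envelope $\tilde W\to W$, and compares the two resulting Kimura sequences~\eqref{e.kimura2} by induction on dimension.  Your route instead transports operators directly across an equivalence of slice categories via Proposition~\ref{p.transf}; this is more categorical and avoids both resolution of singularities and the Kimura machinery, at the cost of the bookkeeping you flag.

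One point deserves more care than you give it.  As written, your $\bar\catC$ looks like a proper subcategory of $T_1\times T_2$-schemes over $W$, and an operational theory built only on that subcategory has no reason to coincide with $\opk^\circ_{T_1\times T_2}(Z\to W)$, which is defined using \emph{all} equivariant $W'\to W$.  What makes your argument go through is that the torsor condition is vacuous: since $T_1$ acts freely on $W$, it acts freely on any $W'$ admitting an equivariant map to $W$, and Zariski-local triviality of the torsor $W\to Y$ then gives $W'\cong (W'/T_1)\times_Y W$, so $\bar\catC$ is in fact the full slice category.  You should state this explicitly.  Likewise, matching the transformation produced by Proposition~\ref{p.transf} with the specific $f^*$ defined in the text (via $c\mapsto 1\otimes c$ followed by bivariant pullback) requires unwinding how the isomorphism $\opk^\circ_T(X\to Y)\cong R(T_1)\otimes\opk^\circ_{T_2}(X\to Y)$ of \cite[Corollary~5.6]{ap} interacts with Merkurjev's $\tau$ on arbitrary $W'\to W$; this is more than a triviality and is where the substance of your argument lies.
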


\begin{proof}
If $Y$ is smooth, then so is $W$, and we have natural Poincar\'e isomorphisms
\[
  \opk^\circ_{T_2}(X\to Y) \isom K_\circ^{T_2}(X) \quad \text{and} \quad  \opk^\circ_{T}(Z\to W) \isom K_\circ^{T}(Z).
\]
Our claim follows by applying Proposition~\ref{Gtorsor.prop} with $G=T_1$ and $H=T_2$.

We will apply the second Kimura sequence (see \S\ref{ss.chow-K}, \eqref{e.kimura2}) to reduce to the case where $Y$ is smooth.  Choose a birational equivariant envelope $\tilde{Y} \to Y$ with $\tilde{Y}$ smooth.  Let $\tilde{W}\to W$ be the pullback, so $\tilde{W}\to\tilde{Y}$ is again a $T_1$ torsor; in particular, $\tilde{W}\to W$ is also a birational envelope, and $\tilde{W}$ is also smooth.

With notation as in \S\ref{ss.chow-K}, let $B\subseteq Y$ and $E\subseteq \tilde{Y}$ be such that the map $\tilde{Y}\to Y$ restricts to an isomorphism $\tilde{Y}\setminus E \xrightarrow{\sim} Y\setminus B$.  Let $A\subseteq X$ and $D\subseteq \tilde{X}$ be the pullbacks to $X$ and $\tilde{X}$; similarly, let $A'\subseteq Z$, $B'\subseteq W$, $D'\subseteq \tilde{Z}$, and $E'\subseteq \tilde{W}$ be the respective pullbacks.  The Kimura sequences for the birational envelopes $\tilde{Y} \to Y$ and $\tilde{W}\to W$ fit together in a diagram
{\small
\begin{diagram}
  0 & \rTo & \opk_{T_2}^\circ (X \to Y) & \rTo & \opk_{T_2}^\circ( \tilde{X} \to \tilde{Y}) \oplus \opk_{T_2}^\circ(A \to B) & \rTo & \opk_{T_2}^\circ (D \to E ) \\
   &       & \dTo  &    & \dTo  &   &  \dTo  \\
  0 & \rTo & \opk_{T}^\circ (Z \to W) & \rTo & \opk_{T}^\circ( \tilde{Z} \to \tilde{W}) \oplus \opk_{T}^\circ(A' \to B') & \rTo & \opk_{T}^\circ (D' \to E' )  ,
\end{diagram}
}

\noindent
with exact rows.  The middle and rightmost vertical arrows are isomorphisms, by induction on dimension and the smooth case, so it follows that the leftmost vertical arrow is an isomorphism, as desired.
\end{proof}

Finally, let $T$ be a torus, with a subtorus $T'\subseteq T$.  Let $X$ be a $T$-scheme.  As an application of Proposition~\ref{op.Gtorsor.prop}, one constructs a natural {\em restriction homomorphism}
\[
  {\rm res}_{T/T'}\colon \opk^\circ_T(X)\to \opk^\circ_{T'}(X).
\]
Indeed, using the proposition and arguing as in Corollary \ref{ind.cor}, we have a natural isomorphism
\[
 \opk_T^\circ(X\times (T/T'))\simeq \opk_{T'}^\circ(X).
\]
The restriction homomorphism is the composition of this isomorphism with pullback along the first projection $X \times T/T' \to X$.

\section{A Grothendieck transformation from algebraic to operational $K$-theory \\ \medskip \emph{by G.~Vezzosi}}  \label{app:vezzosi}

We describe a generalization of operational $K$-theory in derived algebraic geometry and use this, together with properties of the truncation functor to ordinary schemes, to prove the following theorem.
\begin{theorem} \label{thm:Groth-transform}
There is a Grothendieck transformation from the algebraic $K$-theory of $f$-perfect complexes to bivariant operational $K$-theory, taking an $f$-perfect complex $\shfE$ to the corresponding Gysin homomorphisms $f^{\shfE} \in \opk(f)$.
\end{theorem}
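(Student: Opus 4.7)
The plan is to pass through derived algebraic geometry, where both perfect complexes and fiber products behave correctly, and then descend to ordinary schemes via truncation.  First I would extend the construction of bivariant operational $K$-theory from ordinary schemes to derived schemes: objects are derived schemes, confined morphisms are proper, independent squares are homotopy Cartesian squares, and the underlying homology theory is the $K$-theory of coherent modules on the derived target, with refined derived pullbacks along derived regular embeddings and derived smooth morphisms taken as the distinguished operators.  Because the $K$-theory of coherent sheaves is invariant under truncation, and because the truncation functor $t_0\colon \dSch \to \Sch$ sends homotopy Cartesian squares to ordinary fiber squares and preserves properness, one obtains a canonical comparison transformation $\opk^{\der}(f) \to \opk(f)$ for any morphism $f$ of ordinary schemes.

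The central construction assigns to an $f$-perfect complex $\shfE$ on $X$ (with $f\colon X \to Y$ in $\Sch \subset \dSch$) an operator $f^{\shfE} \in \opk^{\der}(f)$: for each morphism $g\colon Y' \to Y$ of derived schemes, form the derived fiber product $X' = X \times^h_Y Y'$, with projections $f'\colon X' \to Y'$ and $g'\colon X' \to X$, and define the Gysin map $K^{\der}_\circ(Y') \to K^{\der}_\circ(X')$ by
\[
  [\shfF] \;\longmapsto\; \bigl[\, Lg'^{\,*}\shfE \otimes^L_{\OO_{X'}} Lf'^{\,*}\shfF \,\bigr].
\]
The $f$-perfectness of $\shfE$, which amounts to pseudo-coherence together with bounded Tor-amplitude over $f^{-1}\OO_Y$, guarantees that this expression has bounded coherent cohomology and hence represents an element of $K^{\der}_\circ(X')$.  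Composition with the comparison map produces the desired $f^{\shfE} \in \opk(f)$.

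That the collection $\{f^{\shfE}_{g}\}$ lies in $\opk^{\der}(f)$, and that $\shfE \mapsto f^{\shfE}$ is a Grothendieck transformation, is then a sequence of essentially formal consequences of derived base change and the derived projection formula.  Compatibility with pullback across stacked homotopy Cartesian squares is derived base change applied to the outer rectangle, while commutativity with proper pushforward on the target and with the distinguished refined pullbacks (themselves given by derived tensor with structure sheaves) is the derived projection formula.  For the Grothendieck transformation axioms, the bivariant product corresponds to associativity of the derived tensor product; pushforward along a proper morphism $h$ is handled by the projection formula, which also ensures that $Rh_*\shfE$ has the required relative perfectness; and contravariant pullback is, once more, derived base change.

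The principal obstacle, and the reason for working in $\dSch$, is that the naive underived definition $[\shfF] \mapsto \sum_i (-1)^i [\tor_i^{\OO_Y}(\shfE,\shfF)]$ fails to satisfy the bivariant axioms on the nose: ordinary fiber products differ from homotopy fiber products by higher $\Tor$ contributions, and the classical base change and projection formulas hold only under Tor-independence hypotheses.  In derived algebraic geometry these higher $\Tor$ are built into the homotopy fiber product, so base change and the projection formula are unconditional, and the axioms can be verified conceptually rather than through iterated spectral sequence arguments.
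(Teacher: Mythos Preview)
Your proposal is correct and follows essentially the same route as the paper: construct a derived operational theory $\opk^{\der}$, produce the Gysin class $f^{\shfE}$ there via derived pullback and tensor product, verify the axioms using derived base change and the projection formula (the paper's Lemmas~\ref{lem:tower} and~\ref{lem:3squares}), and then transport to $\opk(f)$ along a comparison map $\alpha\colon \opk^{\der}(f)\to \opk(t_0 f)$ built from the truncation isomorphism $K_\circ(t_0\fX)\xrightarrow{\sim} K_\circ(\fX)$.  Your final paragraph articulating why the underived construction fails without Tor-independence is exactly the paper's motivation for passing through $\dSch$.
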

\noindent The main difficulty is showing that the Gysin homomorphisms $f^{\shfE}$ satisfy the bivariant axioms (A1) and (A2) in \cite[Definition~4.1]{ap} required to be elements of $\opk(f)$. Indeed, the relevant diagrams do not commute at the level of sheaves on schemes, and we must show that they do commute at the level of $K$-theory.  The key new observations are that the derived analogues of these diagrams do commute, up to homotopy, at the level of complexes of sheaves on derived schemes, and the natural functors between schemes and derived schemes preserve $K$-theory. In particular, while the statement of the theorem is purely about the $K$-theory of morphisms of schemes, the proof uses derived algebraic geometry in an essential way. For background in derived algebraic geometry, we refer the reader to \cite{toen-higher, toen-dag, tv}.  

Throughout, we work over a fixed ground field and assume that all derived schemes are quasi-compact, separated and weakly of finite type, meaning that their truncations are quasi-comapct, separated and of finite type.  All relevant functors on complexes of sheaves on derived schemes, such as push-forward, pullback, and tensor product, are implicitely derived.

\smallskip

Let $\Sch$ denote the category of schemes and let $\dSch$ be the homotopy category of the model category of derived schemes.  Recall that the inclusion $i \colon \Sch \rightarrow \dSch$ is fully faithful and left adjoint to the truncation functor $t_0 \colon \dSch \rightarrow \Sch$ \cite{tv}.  When no confusion seems possible, we will write simply $X$ or $f$, rather than $i(X)$ or $i(f)$, to denote the derived object or morphism associated to an object or morphsim in $\Sch$. Since $t_0$ is right adjoint to $i$, whenever we have a homotopy cartesian square in $\dSch$, 
\begin{diagram}
 \fX' & \rTo & \fY' \\
 \dTo & &  \dTo \\
 \fX & \rTo & \fY,
\end{diagram}
the induced diagram
\begin{diagram}
 t_0 \fX' & \rTo & t_0 \fY' \\
 \dTo & &  \dTo \\
 t_0 \fX & \rTo & t_0 \fY,
\end{diagram}
is cartesian in $\Sch$.

\smallskip

Let $\fX$ be a derived scheme.  Let $\mathsf{QCoh} (\fX)$ be the $\infty$-category of quasi-coherent complexes on $\fX$, as in \cite[\S3.1]{toen-higher}.  We define $\mathsf{Coh}(\fX)$ to be the full $\infty$-subcategory of $\mathsf{QCoh}(\fX)$ whose objects $\mathscr{E}$ have coherent cohomology over $t_0 \fX$ that vanishes in all but finitely many degrees. We write $D_{\coh}(\fX)$ for the homotopy category of $\mathsf{Coh}(\fX)$.  It is a sub-triangulated category of the homotopy category $D_{\qcoh}(\fX)$ of $\mathsf{QCoh}(\fX)$.  

Let $K_\circ(\fX)$ be the Grothendieck group of the triangulated category $D_{\coh}(\fX)$.

\begin{definition} \label{def:properflat}
A morphism of derived schemes $f \colon \fX \to \fY$ is 
\begin{itemize}
\item \emph{proper}, respectively, a \emph{closed immersion}, if $t_0 f$ is so;
\item a \emph{regular embedding} if it is a closed immersion and quasi-smooth (i.e. it is locally of finite presentation and the relative cotangent complex $\mathbb{L}_f$ is of Tor-amplitude $\leq 1$); 
\item \emph{flat} if it is flat as in \cite{tv} (more precisely, see \cite{tv} Definition 2.2.2.3 (2), Proposition 2.2.2.5 (4), for derived affine schemes, and Lemma 2.2.3.4 for the case of arbitrary derived schemes).
\end{itemize}  \end{definition}

\begin{remark}
Note that if $f \colon \fX \to \fY$ is flat, then its truncation $t_0 f  \colon t_0\fX \to t_0\fY$ is flat as a map of usual schemes. 
A map between underived schemes is a regular embedding if and only if it is a regular embedding between derived schemes according to Definition \ref{def:properflat}
(see, e.g. \cite[2.3.6]{KhRy}). A crucial property of regular embeddings between derived schemes is that it is stable under arbitrary (homotopy) pullbacks; such a property is false for regular embeddings of underived schemes and usual scheme theoretic pullbacks. Note, however, that in general, the truncation of a regular embedding between derived schemes might not be a classical regular embedding. 
\end{remark}

\begin{definition}
For a morphism of derived schemes $f\colon \fX \to \fY$, we define $\opk^{\textrm{der}}(f)$ exactly as in \cite[Definition~4.1]{ap}, where all schemes are replaced by derived schemes, pullbacks are replaced by homotopy pullbacks, and proper morphisms, flat morphisms, and regular embeddings are as defined above.
\end{definition}

We start by proving two lemmas that are derived generalizations of \cite[Lemmas~3.1-3.2]{ap}.  Recall that, throughout this appendix, all push forwards, pullbacks, and tensor products of complexes of sheaves on derived schemes are derived.

Let $f \colon \fX \to \fY$ be a morphism in $\dSch$, and let $\shfE$ be an $f$-perfect complex on $\fX$.  For each homotopy cartesian square
\begin{diagram}
\fX' & \rTo^{f' }& \fY' \\
 \dTo^{g'} & & \dTo_g \\
 \fX & \rTo^{f} & \fY,
 \end{diagram}
 we define a Gysin pullback $f^{\shfE}\colon \mathsf{Coh}( \fY') \to \mathsf{Coh} (\fX')$ by setting
 \[
 f^{\shfE}(\shfF) = g'^* \shfE \otimes_{\OO_{X'}} f'^* \shfF,
 \]
for $\shfF \in \mathsf{Coh}( \fY')$ \footnote{This is well defined: the derived pull-back always maps $\mathsf{Coh}^{-}$ to itself, therefore $g'^* \shfE \otimes_{\OO_{X'}} f'^* \shfF$ is in $\mathsf{Coh}^{-}(\fX')$, and it is actually inside $\mathsf{Coh}(\fX')$ because \cite[Exp. III, Cor. 4.7.2]{sga6} holds in derived algebraic geometry without the Tor-independence hypothesis (note that the cartesian square used to define $f^{\shfE}$ is a homotopy cartesian square).  }.  We also write $f^\shfE$ for the induced map $K_\circ(\fY') \to K_\circ(\fX')$
\[
 f^{\shfE}[\shfF] = [g^* \shfE \otimes_{\OO_{X'}} f'^* \shfF].
 \]

\begin{lemma} \label{lem:tower}
Consider a tower of homotopy cartesian squares in $\dSch$, 
\begin{diagram}
 \fX'' & \rTo^{f''} & \fY'' \\
 \dTo^{h'} & &  \dTo_{h} \\
 \fX' & \rTo^{f' }& \fY' \\
 \dTo^{g'} & & \dTo_g \\
 \fX & \rTo^{f} & \fY,
\end{diagram}
and suppose $h$ is proper.  Let $\shfE$ be an $f$-perfect complex on $\fX$.  Then  $$f^{\shfE} \!\! \circ h_* = h'_* \circ f^{\shfE} \!,$$ as maps $K_\circ(\fY'') \to K_\circ(\fX')$.
\end{lemma}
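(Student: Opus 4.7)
The plan is to derive the identity from two standard facts in derived algebraic geometry: the projection formula for proper morphisms, and base change for homotopy cartesian squares. Both hold on the nose, without Tor-independence hypotheses, which is one of the principal advantages of working in the derived setting rather than the underived one.

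Fix $\shfF \in \mathsf{Coh}(\fY'')$. Unfolding the definitions,
\[
  f^{\shfE}(h_* \shfF) = g'^* \shfE \otimes f'^*(h_* \shfF), \qquad h'_*(f^{\shfE} \shfF) = h'_*\bigl( h'^* g'^* \shfE \otimes f''^* \shfF \bigr),
\]
where in the second expression I have used $(g'h')^* \shfE \simeq h'^* g'^* \shfE$ to apply $f^{\shfE}$ relative to the outer homotopy cartesian square. Applying the derived projection formula to the proper morphism $h'$, the right-hand side becomes $g'^* \shfE \otimes h'_*(f''^* \shfF)$. So it suffices to identify $f'^*(h_* \shfF)$ with $h'_*(f''^* \shfF)$, which is precisely the derived base change isomorphism for the lower homotopy cartesian square.

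The remaining verification is that each of these identifications takes place inside $\mathsf{Coh}$, so that the equality descends to $K_\circ$. Boundedness and coherence of cohomology follow from $f$-perfectness of $\shfE$ (exactly as in the footnote establishing that $f^{\shfE}$ is well-defined on $\mathsf{Coh}$), combined with the fact that proper pushforward preserves bounded coherent complexes on quasi-compact, separated derived schemes. Once the isomorphisms are established at the level of $\mathsf{Coh}$, passing to classes in $K_\circ(\fX')$ is immediate.

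The main obstacle is not conceptual but bookkeeping: verifying that each step stays within $\mathsf{Coh}$, and that derived base change and the projection formula are genuinely available without side conditions for arbitrary homotopy cartesian squares of the derived schemes considered here. In particular, no residual \emph{excess-intersection} correction appears, in contrast with the underived setting, because the fiber products $\fX'' = \fX' \times^h_{\fY'} \fY''$ are computed as homotopy limits and $g'^*\shfE$ automatically has the correct Tor-amplitude against $f''$ via $f$-perfectness.
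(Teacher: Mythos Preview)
Your argument is correct and follows the same path as the paper: unwind the definitions, apply the derived projection formula for $h'$, and then invoke derived base change to identify $f'^*(h_*\shfF)$ with $h'_*(f''^*\shfF)$. One small slip: the base change isomorphism $f'^*h_*\shfF \simeq h'_* f''^*\shfF$ is for the \emph{upper} homotopy cartesian square (the one with vertical maps $h,h'$), not the lower one; the lower square is only used implicitly via the composite $g'h'$ when you write $(g'h')^*\shfE \simeq h'^*g'^*\shfE$.
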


\begin{proof}
Let $\shfF \in \mathsf{Coh} (\fY'')$.  We have
\[
f^{\shfE} h_*[\shfF] = f^{\shfE}[h_* \shfF] = [g'^*\shfE \otimes_{\OO_{X'}} f'^* h_* \shfF].
\]
By the base-change formula \cite[Proposition~1.4]{toen-intersections}, we have\footnote{This is another step where we use $\dSch$ in a crucial way; the analogous statement does not hold for cartesian diagrams in $\Sch$, without further hypotheses.} $$f'^*h_*\shfF \cong h'_* f''^*\shfF,$$ and hence
\begin{equation} \label{eq:dot}
f^{\shfE} h_* [\shfF] = [g'^* \shfE \otimes_{\OO_{X'}} h'_*f''^* \shfF].
\end{equation}

On the other hand, we have:
\[
h'_*f^{\shfE}[\shfF] = h'_*[h'^*g'^*\shfE \otimes_{\OO_{X''}} f''^*\shfF] = [h'_*(h'^*g'^*\shfE \otimes_{\OO_{X''}} f''^* \shfF].
\]

Applying the projection formula, we get
\[
h'_*(h'^*g'^*\shfE \otimes_{\OO_{X''}} f''^*\shfF) \cong g'^* \shfE \otimes_{\OO_{X'}} h'_*f''^*\shfF,
\]
and hence
\begin{equation}\label{eq:dotdot}
h'_*f^{\shfE}[\shfF] = [g'^*\shfE \otimes_{\OO_{X'}} h'_* f''^* \shfF].
\end{equation}
Comparing \eqref{eq:dot} and \eqref{eq:dotdot} gives $f^{\shfE} h_* [\shfF] = h'_* f^{\shfE} [\shfF]$, as required.
\end{proof}

\begin{lemma} \label{lem:3squares}
Consider the following diagram in $\dSch$, with homotopy cartesian squares:
\begin{diagram}
 \fX'' & \rTo^{f''} & \fY'' & \rTo^{u'}  &  \fZ''  \\
\dTo^{h''} & & \dTo^{h'}    &       & \dTo_{h} \\
 \fX' & \rTo^{f'} & \fY'    & \rTo^u  &  \fZ' \\
\dTo^{g'} & & \dTo_g \\
\fX  & \rTo^{f}  & \fY.
\end{diagram}
Suppose $\shfE$ is $f$-perfect and $\shfV$ is $h$-perfect.  Then $f^{\shfE} \circ h^{\shfV} = h^{\shfV} \circ f^{\shfE}$ as maps $K_\circ(\fY') \to K_\circ(\fX'')$.
\end{lemma}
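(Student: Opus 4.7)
The plan is to verify the identity by direct computation: I will unpack each composition in terms of derived pullbacks of $\shfE$, $\shfV$, and $\shfF$ to $\fX''$, together with the derived tensor product, and then match the two resulting expressions using symmetry of the tensor product and the commutativity of the relevant homotopy cartesian squares.

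First I would identify the squares used to define each Gysin pullback. For $\shfF \in \mathsf{Coh}(\fY')$, the map $h^{\shfV}$ uses the top-right homotopy cartesian square (horizontal maps $u,u'$, vertical maps $h,h'$) to give
\[
h^{\shfV}(\shfF) = u'^{*}\shfV \otimes_{\OO_{\fY''}} h'^{*}\shfF \in \mathsf{Coh}(\fY''),
\]
after which $f^{\shfE}$ uses the outer rectangle formed by the top-left and bottom squares (with vertical maps $g\circ h'$ and $g'\circ h''$) to give
\[
f^{\shfE}(h^{\shfV}\shfF) = (g'h'')^{*}\shfE \otimes f''^{*}\bigl(u'^{*}\shfV \otimes h'^{*}\shfF\bigr).
\]
Using compatibility of derived pullback with derived tensor product, and functoriality of pullback, this expands to
\[
h''^{*}g'^{*}\shfE \;\otimes\; (u'f'')^{*}\shfV \;\otimes\; (h'f'')^{*}\shfF.
\]
The other composition is handled analogously: $f^{\shfE}$ uses the bottom-left square to produce $g'^{*}\shfE \otimes f'^{*}\shfF$, after which $h^{\shfV}$ uses the rectangle obtained by composing the top-right square with the bottom-left (horizontal maps $uf'$ and $u'f''$), yielding
\[
h^{\shfV}(f^{\shfE}\shfF) = (u'f'')^{*}\shfV \;\otimes\; h''^{*}g'^{*}\shfE \;\otimes\; (f'h'')^{*}\shfF.
\]
Comparing the two formulas, the $\shfE$- and $\shfV$-pullbacks agree on the nose, while the $\shfF$-pullbacks agree via the identity $h'\circ f'' = f'\circ h''$, which is precisely the commutativity up to canonical homotopy of the homotopy cartesian square with vertices $\fX'',\fY'',\fX',\fY'$. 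Invoking symmetry of the derived tensor product then gives equality in $\mathsf{Coh}(\fX'')$, hence in $K_\circ(\fX'')$. The $f$-perfectness of $\shfE$ and the $h$-perfectness of $\shfV$ ensure that each factor stays in $\mathsf{Coh}(\fX'')$, so the induced classes are well defined.

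The heart of the argument---and the reason derived algebraic geometry is essential---is the use of the base-change isomorphism $f''^{*}u'^{*}\shfV \cong (u'f'')^{*}\shfV$ and of the formula $f''^{*}(\shfV \otimes h'^{*}\shfF) \cong f''^{*}\shfV \otimes f''^{*}h'^{*}\shfF$. Both are automatic canonical quasi-isomorphisms in $D_{\coh}(\fX'')$ without any Tor-independence hypotheses on the sheaves, because every square in the diagram is homotopy cartesian in $\dSch$ and every tensor product and pullback is derived. Classically, these identifications would fail without extra flatness assumptions on $\shfE$ or $\shfV$, which is exactly the obstacle that the derived framework removes; since we only need equality in the Grothendieck group $K_\circ(\fX'')$, the canonical quasi-isomorphisms suffice and we need not worry about higher coherences.
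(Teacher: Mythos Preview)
Your proof is correct and follows essentially the same direct computation as the paper: expand both compositions via the definition of the Gysin maps, distribute derived pullback over derived tensor product, and match terms using $f''^{*}h'^{*}\cong h''^{*}f'^{*}$ from the top-left homotopy cartesian square. One minor terminological slip in your final paragraph: the isomorphism $f''^{*}u'^{*}\shfV \cong (u'f'')^{*}\shfV$ is just pseudofunctoriality of pullback, not a base-change isomorphism, and it holds already in the underived setting; the genuinely derived input here is only that the homotopy cartesian squares guarantee the compositions of maps agree (so the pullback functors match) and that the Gysin constructions land in $\mathsf{Coh}$.
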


\begin{proof}
Let $\xi \in \mathsf{Coh}(\fY')$. Then
\begin{align*}
f^{\shfE} \circ h^{\shfV} [\xi] & = [h''^*g'^*\shfE \otimes_{\OO_{X''}} f''^*(h'^*\xi \otimes_{\OO_{Y''}} u'^* \shfV]; \\
& = [h''^*g'^*\shfE \otimes_{\OO_{X''}} f''^*u'^* \shfV \otimes_{\OO_{X''}} f''^*h'^*\xi].
\end{align*}
Similarly,
\begin{align*}
h^{\shfV} \circ f^{\shfE} [ \xi ] & = [ f''^* u'^* \shfV \otimes_{\OO_{X''}} h''^* (g'^* \shfE \otimes_{\OO_{X'}}f'^* \xi)]; \\
& = [f''^*u'^* \shfV \otimes_{\OO_{X''}} h''^*g'^* \shfE \otimes_{\OO_{X''}} h''^*f'^*\xi].
\end{align*}
The lemma follows, since $h''^*f'^* \xi \cong f''^* h'^* \xi$.
\end{proof}


A crucial step in the proof of Theorem \ref{thm:Groth-transform} is the following 

\begin{proposition} \label{prop:derived-isom}
Let $f \colon \fX \to \fY$ be a morphism in $\dSch$. Then there is a canonical injective morphism of groups 
\[
\alpha: \opk^{\textrm{der}}(f) \longrightarrow \opk(t_0 f).
\]
\end{proposition}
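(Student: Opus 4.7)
The plan is to define $\alpha$ as the ``restriction to classical data'' map, exploiting that $i \colon \Sch \to \dSch$ is fully faithful and left adjoint to $t_0$. Given $c \in \opk^{\textrm{der}}(f)$ and a classical fiber square over $t_0 f$ with right edge $g \colon Y' \to t_0\fY$, I will lift $g$ through $i$ and the counit $i t_0\fY \to \fY$ to a derived morphism $\tilde g \colon iY' \to \fY$, form the homotopy pullback $\tilde \fX' = \fX \times^h_\fY iY'$, and define $\alpha(c)_g$ to be $c_{\tilde g}$ transported through the canonical isomorphisms $K_\circ(Y') = K_\circ(iY')$ and $K_\circ(\tilde \fX') \cong K_\circ(t_0 \tilde \fX') = K_\circ(X')$. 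These identifications rest on the general fact that, for any derived scheme $\fW$, the bounded t-structure on $D_{\coh}(\fW)$ has heart $\mathsf{Coh}(t_0\fW)$, so the closed immersion $it_0\fW \hookrightarrow \fW$ induces an isomorphism $K_\circ(t_0\fW) \xrightarrow{\sim} K_\circ(\fW)$.

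Next I would verify that $\{\alpha(c)_g\}$ satisfies the bivariant axioms of \cite[Definition~4.1]{ap}. A proper morphism (resp.\ flat morphism, resp.\ regular embedding) in $\Sch$ becomes one of the same type in $\dSch$, and the homotopy pullback of a classical cartesian square agrees, after truncation, with the ordinary pullback. Each axiom for $\alpha(c)$ therefore reduces to the corresponding axiom for $c$ in $\opk^{\textrm{der}}(f)$, provided the $K_\circ$-identifications intertwine the classical and derived versions of proper pushforward, flat pullback, and pullback along regular embeddings. For pushforward this is exactness on the heart; for flat pullback, derived pullback is already underived; for a regular embedding, the derived Gysin pullback differs from the classical one only by higher Tor terms whose alternating sum vanishes in $K_\circ$.

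For injectivity, suppose $\alpha(c) = 0$ and let $g \colon \fY' \to \fY$ be arbitrary. Let $j \colon it_0\fY' \to \fY'$ be the counit (a closed immersion) and set $\tilde \fX' = \fX \times^h_\fY \fY'$ and $\tilde \fX'' = \fX \times^h_\fY it_0\fY'$. Since truncation of a homotopy pullback is the ordinary pullback, $t_0 \tilde\fX' = t_0 \tilde\fX''$, so the induced $j' \colon \tilde\fX'' \to \tilde\fX'$ is also the closed immersion of a common classical truncation. The proper-pushforward axiom for $c$ applied to $j$ gives $c_g \circ j_* = j'_* \circ c_{gj}$; since both $j_*$ and $j'_*$ are $K_\circ$-isomorphisms by the t-structure argument, $c_g$ is determined by $c_{gj}$. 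The adjunction $i \dashv t_0$ identifies $gj$ with a unique classical morphism $t_0 g \colon t_0\fY' \to t_0\fY$, and under the above transports $c_{gj}$ agrees with $\alpha(c)_{t_0 g}$. Hence $\alpha(c) = 0$ forces $c = 0$.

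The main obstacle I anticipate is the axiom-verification step, particularly compatibility with pullback along regular embeddings: a classical regular embedding is a derived regular embedding, but the Gysin map on coherent sheaves genuinely changes passing between $\Sch$ and $\dSch$, so the identity must be checked in $K_\circ$ rather than at the level of complexes, using the vanishing of Tor contributions in the Grothendieck group.
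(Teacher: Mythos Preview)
Your proposal is correct and follows essentially the same route as the paper. Both construct $\alpha$ by lifting a classical $g\colon Y'\to t_0\fY$ through the closed immersion $j\colon t_0\fY\hookrightarrow \fY$, setting $\alpha(c)_g=c_{j\circ g}$ via the truncation isomorphism $K_\circ(t_0\fW)\xrightarrow{\sim}K_\circ(\fW)$; and both prove injectivity by applying axiom~(A1) to the closed immersion $j'\colon t_0\fY'\hookrightarrow\fY'$, using that $j'_*$ is a $K_\circ$-isomorphism to recover $c_h$ from $c_{h\circ j'}=\alpha(c)_{t_0 h}$.

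The only noteworthy difference is emphasis: the paper dispatches the axiom verification in one sentence, citing Lemmas~\ref{lem:tower} and~\ref{lem:3squares} and leaving details to the reader, whereas you explicitly flag the compatibility of the $K_\circ$-identifications with the classical versus derived Gysin maps for regular embeddings. Your observation that the discrepancy is given by higher Tor terms whose alternating sum vanishes in $K_\circ$ is exactly the point needed, and it is good that you isolate it, since the paper's reference to Lemmas~\ref{lem:tower} and~\ref{lem:3squares} is really about the derived axioms for $c$ rather than about this transport step.
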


\begin{proof}
We begin by observing that, for any derived scheme $\fX$, the natural map
\begin{equation}\label{heart}
j_* \colon K_\circ(t_0 \fX) \rightarrow K_\circ (\fX)
\end{equation}
is an isomorphism, where $j \colon t_0 \fX \to \fX$ is the closed immersion of the truncation into the derived scheme.  See \cite[\S3.1, p.~193]{toen-dag}.

\smallskip

Let $c = \{ c_g \} \in \opk^{\textrm{der}}(f)$, and let 
\begin{diagram}
 X' & \rTo & Y' \\
 \dTo & &  \dTo_{h} \\
 t_0 \fX & \rTo^{t_0 f} & t_0 \fY,
\end{diagram}
be cartesian in $\Sch$.  Consider the homotopy cartesian square in $\dSch$
\begin{diagram}
 \fX' & \rTo & Y' \\
 \dTo & &  \dTo_{j \circ h} \\
 \fX & \rTo^f & \fY,
\end{diagram}
where the righthand vertical arrow is the composition of $h$ with the closed embedding $j\colon t_0 \fY \to \fY$.

By applying the truncation functor, we obtain a cartesian square in $\Sch$
\begin{diagram}
 t_0 \fX' & \rTo & Y' \\
 \dTo & &  \dTo_{h} \\
 t_0 \fX & \rTo^f & t_0 \fY,
\end{diagram}
Therefore, $t_0\fX' \cong X'$.  We then set (using (\ref{heart})) $\alpha(c)_h = c_{j \circ h}$.  Using lemmas \ref{lem:tower} and \ref{lem:3squares}, together with axioms (A1) and (A2) for $\opk^{\textrm{der}}(f)$, one may check that, indeed, $\alpha(c) \in \opk(t_0 f)$, i.e. $\alpha(c)$ verifies axioms (A1) and (A2) for $\opk(t_0 f)$. We leave these details to the reader. Since $\alpha$ obviously preserves the sum of two morphisms, we have obtained a well defined group homomorphism $\alpha \colon \opk^{\textrm{der}}(f) \to \opk(t_0 f)$.

\smallskip

We now show that $\alpha$ is injective.  Suppose $c, c' \in \opk^{\textrm{der}}(f)$ satisfy $\alpha(c) = \alpha(c')$.  Set notation $\alpha(c) = \{ c^\alpha_g \}$ and $\alpha(c') = \{ c'^{\alpha}_g \}$. Suppose $g\colon Y' \to t_0 \fY$ and 
\begin{diagram}
 X' & \rTo & Y' \\
 \dTo & &  \dTo_g \\
 t_0 \fX & \rTo^{t_0 f} & t_0 \fY,
\end{diagram}
is cartesian in $\Sch$.  Then $c_g^\alpha$ and $c'^{\alpha}_g$ are defined in terms of the homotopy cartesian square
\begin{diagram}
 \fX' & \rTo & Y' \\
 \dTo & &  \dTo_{j \circ g} \\
 \fX & \rTo^f & \fY,
\end{diagram}
by setting $c^\alpha_g$ = $c_{j \circ g}$ and $c'^{\alpha}_g = c'_{j \circ g}$.  We are assuming that $c^\alpha_g = c'^{\alpha}_g$ for all relevant arrows $g$ in $\Sch$ and must show that $c_h = c'_h$ for all relevant arrows $h$ in $\dSch$.  

Let $h\colon \fY' \to \fY$ in $\dSch$, and suppose
\begin{diagram}
\fX' & \rTo & \fY' \\
 \dTo & &  \dTo_h \\
 \fX & \rTo^f & \fY,
\end{diagram}
is homotopy cartesian.  Consider the cartesian diagram in $\Sch$ obtained from this by truncation.  We know, by hypothesis, that $c^\alpha_{t_0 h} = c'^{\alpha}_{t_0 h}$, i.e., that $c_{j \circ t_0 h} = c'_{j \circ t_0 h}$.  Now observe that, by functoriality of $t_0$, the diagram
\begin{diagram}
 t_0 \fY' & \rTo^{t_0 h} & t_0 \fY \\
 \dTo^{j'} & &  \dTo_j \\
 \fY' & \rTo^{h} & \fY,
\end{diagram}
is commutative, and hence, by forming the homotopy cartesian square
\begin{diagram}
 \fX'' & \rTo & t_0 \fY' \\
 \dTo & &  \dTo_{h \circ j'} \\
 \fX & \rTo^{f}&  \fY,
\end{diagram}
in $\dSch$ (with the same $\fX'$), we deduce  $c_{h \circ j'} = c'_{h \circ j'}$ (note that $t_0\fX'' \simeq t_0\fX'$, hence $K_\circ(\fX'') \simeq K_\circ(\fX')$ by (\ref{heart})). We complete the proof that $\alpha$ is injective by showing that, if $c, c' \in \opk^{\textrm{der}}(f)$ satisfy $c_{h \circ j'} = c'_{h \circ j'}$ for all $h\colon \fY' \to \fY$, then $c = c'$.\\
In order to do this, we consider a tower of homotopy cartesian squares
\begin{diagram}
 \fX'' & \rTo & t_0 \fY' \\
 \dTo^\rho & &  \dTo_{j'} \\
 \fX' & \rTo^f & \fY' \\
 \dTo & & \dTo_h \\
 \fX & \rTo^{f} & \fY.
\end{diagram}
Since $j' : t_0 \fY' \hookrightarrow \fY'$ is proper, the property (A1) in the definition of $\opk^{\textrm{der}}(f)$ (\cite[Definition~4.1]{ap}) tells us that the inner and outer squares of
\begin{diagram}
 K_\circ(\fY') & \pile{ \rTo^{c'_h} \\ \rTo_{c_h}} & K_\circ \fX' \\
 \uTo^{j'_*} & &  \uTo_{\rho_*} \\
 K_\circ(t_0 \fY') & \pile{ \rTo^{c_{h \circ j'}} \\ \rTo_{c'_{h \circ j'}}} & K_\circ(\fX''),
\end{diagram}
commute (separately). The lefthand vertical arrow $j'_*$ is an isomorphism, so the equality $c'_{h \circ j'} = c_{h \circ j'}$ implies $c'_h = c_h$, as claimed. This concludes the proof of Proposition \ref{prop:derived-isom}.
\end{proof}

\begin{remark} The truncation of a regular embedding is not, in general, a classical regular embedding, so our proof does not extend to show the map $\alpha$ is an isomorphism (as we claimed in a previous version of the paper). We thank the careful referee for addressing this point.
However, even if, for the purposes of this Appendix, injectivity of $\alpha$ is sufficient, T.~Annala in a recent preprint (\cite{annala2}) gave a proof that $\alpha$ is indeed bijective.
\end{remark}

\begin{proof}[Proof of Theorem~\ref{thm:Groth-transform}]
Let $f \colon X \to Y$ be a morphism in $\Sch$, and let $\shfE$ be an $f$-perfect complex.   Apply the functor $i \colon \Sch \to \dSch$, view $\shfE$ as an $i(f)$-perfect complex on $i(X)$, and consider the collection of Gysin homorphisms $i(f)^\shfE \colon K_\circ(\fY') \to K_\circ(\fX')$, for homotopy cartesian squares
\begin{diagram}
\fX' & \rTo & \fY' \\
 \dTo & & \dTo \\
 i(X) & \rTo^{i(f)} & i(Y),
 \end{diagram}
 in $\dSch$.  Lemmas~\ref{lem:tower} and \ref{lem:3squares} show that these Gysin homorphisms satisfy the bivariant axioms (A1) and (A2) from \cite[Definition~4.1]{ap}, respectively, and hence give rise to an element $i(f)^\shfE \in \opk^{\textrm{der}}(i(f))$.  We then obtain the required Grothendieck transformation by taking $[\shfE]$ to the image of $i(f)^\shfE$ in $\opk(f)$, under the the morphism $\alpha$ in Proposition~\ref{prop:derived-isom} (note that $t_0(f)=f$, here).
\end{proof}

We conclude with a result on composition of Gysin maps associated to $f$-perfect complexes in operational $K$-theory of derived schemes. The special case where $f$ is a regular embedding, $g$ is smooth, and $\shfV = \OO_Y$ is the derived analogue of \cite[Lemma~3.3]{ap}.

\begin{proposition} \label{prop:lem3.3}
Let $f\colon \fX \to \fY$ and $g \colon \fY \to \fZ$ be morphisms in $\dSch$.  Let $\shfE$ be $f$-perfect, and let $\shfV$ be $g$-perfect.  Then $f^{\shfE} \circ g^{\shfV} = (g \circ f)^{\shfE \otimes f^* \shfV}$, provided that $\shfE \otimes f^* \shfV$ is $(g \circ f)$-perfect.
\end{proposition}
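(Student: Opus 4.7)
The plan is to verify the claimed equality of Gysin maps component by component against an arbitrary base change. Given any morphism $h\colon \fZ' \to \fZ$ in $\dSch$, I would form the tower of homotopy cartesian squares
\begin{diagram}
\fX'' & \rTo^{\tilde{f}} & \fY' & \rTo^{\tilde{g}} & \fZ' \\
\dTo^{g''} & & \dTo^{g'} & & \dTo_h \\
\fX & \rTo^f & \fY & \rTo^g & \fZ,
\end{diagram}
and invoke the pasting lemma for homotopy pullbacks to identify the outer rectangle with the homotopy fiber product of $gf$ and $h$, whose projection to $\fZ'$ is the composite $\tilde{g} \circ \tilde{f}$. This is the homotopy cartesian square against which the Gysin map $(gf)^{\shfE \otimes f^*\shfV}$ is computed.

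Next I would unravel the left-hand side. By definition,
\[
f^{\shfE}\bigl(g^{\shfV}[\xi]\bigr) = \bigl[g''^*\shfE \otimes_{\OO_{\fX''}} \tilde{f}^*\bigl(g'^*\shfV \otimes_{\OO_{\fY'}} \tilde{g}^*\xi\bigr)\bigr]
\]
for $\xi \in \mathsf{Coh}(\fZ')$. I would then push the pullback $\tilde{f}^*$ through the tensor product (automatic for derived pullback) and simplify using the natural isomorphisms $\tilde{f}^* g'^* \cong (g'\tilde{f})^* = (fg'')^* \cong g''^* f^*$ (from commutativity of the left square) and $\tilde{f}^*\tilde{g}^* \cong (\tilde{g}\tilde{f})^*$. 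Combining these with $g''^*(\shfE \otimes f^*\shfV) \cong g''^*\shfE \otimes g''^* f^*\shfV$ yields
\[
f^{\shfE}\bigl(g^{\shfV}[\xi]\bigr) = \bigl[g''^*(\shfE \otimes f^*\shfV) \otimes_{\OO_{\fX''}} (\tilde{g}\tilde{f})^*\xi\bigr],
\]
which is precisely the image of $\xi$ under $(gf)^{\shfE \otimes f^*\shfV}$ applied to the outer homotopy cartesian rectangle.

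The entire argument is essentially bookkeeping, relying on three structural inputs in derived algebraic geometry: compatibility of derived pullback with derived tensor product, functoriality of pullback under composition, and the pasting property for homotopy cartesian squares. The hypothesis that $\shfE \otimes f^*\shfV$ be $(gf)$-perfect plays no role in the identity itself; it is needed only to guarantee that the right-hand side is well-defined as an element of $\opk^{\mathrm{der}}(gf)$. Since no derived base-change theorem or projection formula is invoked—unlike in the proofs of Lemmas~\ref{lem:tower} and \ref{lem:3squares}—I do not anticipate any substantive obstacle; the proof should be shorter and more routine than those of the two preceding lemmas.
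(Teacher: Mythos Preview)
Your proposal is correct and follows essentially the same argument as the paper's proof: both set up the tower of homotopy cartesian squares over an arbitrary base change, expand the two Gysin maps by definition, and reduce the comparison to the natural isomorphism $\tilde{f}^* g'^* \shfV \cong g''^* f^* \shfV$ coming from commutativity of the left square. The only differences are notational, and your explicit invocation of the pasting lemma and your remark on the role of the $(gf)$-perfectness hypothesis are accurate elaborations.
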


\begin{proof}
Consider the following diagram, with homotopy cartesian squares:
\begin{diagram}
 \fX' & \rTo^{f'} & \fY' & \rTo^{g'} & \fZ' \\
\dTo_{h''} & & \dTo_{h'}   &     & \dTo_{h} \\
 \fX & \rTo^{f} & \fY   &  \rTo^g  &  \fZ.
 \end{diagram}
Let $\shfF \in \mathsf{Coh}(\fZ')$.  We have 
\begin{align*}
f^{\shfE} \circ g^{\shfV} [\shfF] & = [h''^* \shfE \otimes_{\OO_{X'}} f'^*(h'^*\shfV \otimes_{\OO_{Y}} g'^* \shfF)] \\
& = [h''^* \shfE \otimes_{\OO_{X'}} f'^*h'^* \shfV \otimes_{\OO_{X'}} f'^*g'^*\shfF].
\end{align*}
Similarly,
\begin{align*}
(g \circ f)^{\shfE \otimes f^*\shfV} [\shfF] & = [h''^*(\shfE \otimes_{\OO_X} f^* \shfV) \otimes f'^*g'^*\shfF] \\
&= [h''^* \shfE \otimes_{\OO_{X'}} h''^*f^* \shfV \otimes_{\OO_{X'}} f'^*g'^* \shfF] \\
\end{align*}
The lemma follows, since $f'^*h'^* \shfV \cong h''^* f^* \shfV$.
\end{proof}

Combining Propositions~\ref{prop:derived-isom} and \ref{prop:lem3.3}, we deduce the following corollary for canonical orientations of morphisms in $\Sch$.  This generalizes \cite[Lemma~4.2]{ap}, and solves a problem raised in loc. cit.

\begin{corollary}
If $f\colon X \to Y$ and $g\colon Y \to Z$ are morphisms of finite $\mathrm{Tor}$-dimension in $\Sch$ then $f^! \circ g^! = (g \circ f)^!$.
\end{corollary}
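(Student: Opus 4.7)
The plan is to lift the problem to derived algebraic geometry, where Proposition~\ref{prop:lem3.3} already provides the required composition formula, and then descend to ordinary schemes via the Grothendieck transformation of Theorem~\ref{thm:Groth-transform}. A morphism $f\colon X\to Y$ of finite Tor-dimension is precisely one for which $\OO_X$ is $f$-perfect, and its canonical orientation $f^!\in \opk(f)$ should be identified with the image of $[\OO_X]$ under the Grothendieck transformation $\Phi_f\colon K^\circ_{f\text{-perf}}(X\to Y)\to \opk(X\to Y)$; on $K_\circ$ this is the derived pullback $\shfF\mapsto f'^*\shfF$, which agrees with the refined pullback in the cases where $f^!$ is already defined (e.g.\ flat morphisms, regular embeddings).

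First I would verify that $g\circ f$ again has finite Tor-dimension whenever $f$ and $g$ do, via the standard estimate
\[
\OO_X\otimes^L_{\OO_Z}(-) \;\simeq\; \OO_X\otimes^L_{\OO_Y}\,g^*(-),
\]
which combines Tor-amplitudes additively. Hence $\OO_X \simeq \OO_X\otimes^L_{\OO_X} i(f)^*\OO_Y$ is $(g\circ f)$-perfect, so the hypothesis of Proposition~\ref{prop:lem3.3} is satisfied with $\shfE=\OO_X$ and $\shfV=\OO_Y$. Applying that proposition in $\dSch$ yields the equality
\[
i(f)^{\OO_X}\circ i(g)^{\OO_Y} \;=\; i(g\circ f)^{\OO_X}
\]
in derived operational $K$-theory.

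Next I would apply $\Phi$ to this equality. Since Theorem~\ref{thm:Groth-transform} asserts that $\Phi$ is a genuine Grothendieck transformation (not merely a collection of group homomorphisms), it respects bivariant products; the bivariant product of $[\OO_X]$ and $[\OO_Y]$ in the $K$-theory of perfect complexes is $[\OO_X\otimes^L_{\OO_Y} f^*\OO_Y]=[\OO_X]$, so the above identity in $\opk^{\textrm{der}}$ descends to the identity $f^!\circ g^! = (g\circ f)^!$ in $\opk$, as required.

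The step I expect to be the main technical point is verifying that $\Phi$ genuinely respects bivariant product, i.e.\ that the map $\alpha$ of Proposition~\ref{prop:derived-isom} is compatible with composition of operators. Concretely, given composable $c\in\opk^{\textrm{der}}(i(f))$ and $c'\in\opk^{\textrm{der}}(i(g))$ and a fiber square over $Z$ in $\Sch$, one must check that the bivariant product of $\alpha(c)$ and $\alpha(c')$ evaluated on that square coincides with $\alpha(c\cdot c')$ evaluated via precomposition with the truncation map $j$ of Proposition~\ref{prop:derived-isom}. This follows from functoriality of $t_0$ together with the base-change compatibility already exploited in Lemmas~\ref{lem:tower} and \ref{lem:3squares}, but it is the only step beyond the formal invocation of Proposition~\ref{prop:lem3.3}.
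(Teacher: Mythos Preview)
Your approach is essentially the same as the paper's: identify $f^!$ with $\alpha(i(f)^{\OO_X})$, apply Proposition~\ref{prop:lem3.3} in $\dSch$ with $\shfE=\OO_X$ and $\shfV=\OO_Y$, and descend via the map $\alpha$ of Proposition~\ref{prop:derived-isom}. You are in fact more careful than the paper on two points the paper leaves implicit: the verification that $g\circ f$ has finite Tor-dimension (so that $\OO_X$ is $(g\circ f)$-perfect), and the observation that one needs $\alpha$ to be compatible with bivariant product in order to conclude $\alpha(i(f)^{\OO_X}\cdot i(g)^{\OO_Y})=\alpha(i(f)^{\OO_X})\cdot\alpha(i(g)^{\OO_Y})$; the paper simply invokes Proposition~\ref{prop:derived-isom} without comment, relying on the assertion in Theorem~\ref{thm:Groth-transform} that the resulting map is a genuine Grothendieck transformation.
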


\begin{proof}
Since $f$ has finite $\tor$-dimension, the structure sheaf $\OO_X$ is $f$-perfect, and $f^! = f^{\OO_X}$, and similarly for $g$.  Applying Proposition~\ref{prop:lem3.3} to the morphisms $i(f)$ and $i(g)$ in $\dSch$, with $\shfE = \OO_{i(X)}$ and $\shfV = \OO_{i(Y)}$ shows that $i(g \circ f)^! = i(f)^! \circ i(g)^!$. The corollary follows, using Proposition~\ref{prop:derived-isom} to pass from $\opk^{\textrm{der}}(f)$ to $\opk(f)$ (note that $f=t_0(f)$, here). \end{proof}



\end{document}